\def\sh{\mathop{\text{sh}}\nolimits}
\def\Pb{\mathop{\mathbb{P}_{}}}
\def\Treen{\mathop{\mathbf{T}_n}\nolimits}
\def\tree{\mathop{\mathbf{t}_{}}\nolimits}
\def\shape{\mathop{\mathbf{s}_{}}\nolimits}
\newtheorem{theorem}{Theorem}
\newtheorem{lemma}[theorem]{Lemma}
\newtheorem{proposition}[theorem]{Proposition}
\newtheorem{corollary}[theorem]{Corollary}
\newtheorem{definition}[theorem]{Definition}
\newtheorem{example}{Example}
\newtheorem{remark}{Remark}
\title{Inference on the History of a Randomly Growing Tree}
\author{Harry Crane, Min Xu \\
Department of Statistics \\
Rutgers University}
\date{May, 2020}
\begin{document}

\maketitle
\begin{abstract}
The spread of infectious disease in a human community or the proliferation of fake news on social media can be modeled as a randomly growing tree-shaped graph. The history of the random growth process is often unobserved but contains important information such as the source of the infection. We consider the problem of statistical inference on aspects of the latent history using only a single snapshot of the final tree. Our approach is to apply random labels to the observed unlabeled tree and analyze the resulting distribution of the growth process, conditional on the final outcome. We show that this conditional distribution is tractable under a  \emph{shape-exchangeability} condition, which we introduce here, and that this condition is satisfied for many popular models for randomly growing trees such as uniform attachment, linear preferential attachment and uniform attachment on a $D$-regular tree. For inference of the root under shape-exchangeability, we propose $O(n\log n)$ time algorithms for constructing confidence sets with valid frequentist coverage as well as bounds on the expected size of the confidence sets. We also provide efficient sampling algorithms which extend our methods to a wide class of inference problems.
\end{abstract}

\section{Introduction}

Many growth processes, such as the transmission of disease in a human population, the proliferation of fake news on social media, the spread of computer viruses on computer networks, and the development of social structures among individuals, can be modeled as a growing tree-shaped network.  We visualize the process as a growing tree, as in Figure \ref{fig:illustration}(a), with each individual corresponding to a node labeled by its arrival time and associations between individuals represented by an edge between the corresponding nodes.  In the examples mentioned above, the edges of the tree may correspond, respectively, to the transmission of a disease, spread of a rumor, passage of a computer virus, or establishment of a friendship between the connected nodes.  For concreteness, we frame the following discussion in the context of disease spread, so that the edges of the tree represent a person-to-person spread of an infection.

In this setting, we assume there is an initial infected individual, called the {\em root}, at time $t=1$.  At each discrete time step $t=2,3,\ldots$, a new individual becomes infected by one of the individuals previously infected at times $1,\ldots,t-1$ according to a probability distribution that depends on the past history of the process.  By tracking the infection through the growth of the corresponding infection tree (as in Figure \ref{fig:illustration}(a)), the process of infection thus produces a sequence of trees $\tree=(\tree_1,\tree_2,\ldots)$ which represents the complete  history of infection in the population.  In particular, $\tree_n$ is a tree with $n$ nodes and a directed edge $uv$ indicating that node $u$ passed the infection to node $v$.  Equivalently, labels $1,\ldots,n$ can be assigned to the nodes of $\tree_n$ according to the time at which each node arrives, so that any edge $(u,v) \in\tree_n$ is immediately interpreted as a transmission from the node with the lower label to the one with the higher label.

\begin{figure}
    \centering
    (a) \includegraphics[scale=.4]{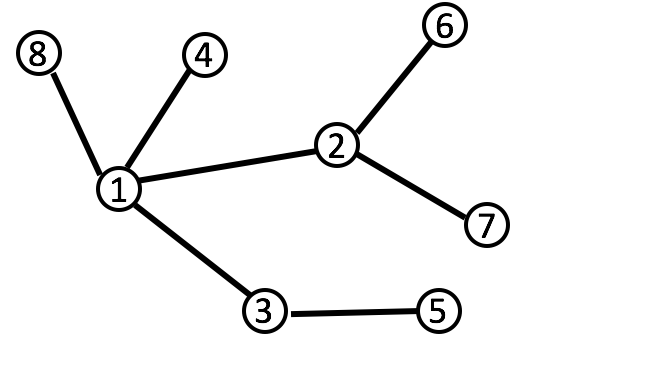}
    (b) \includegraphics[scale=.4]{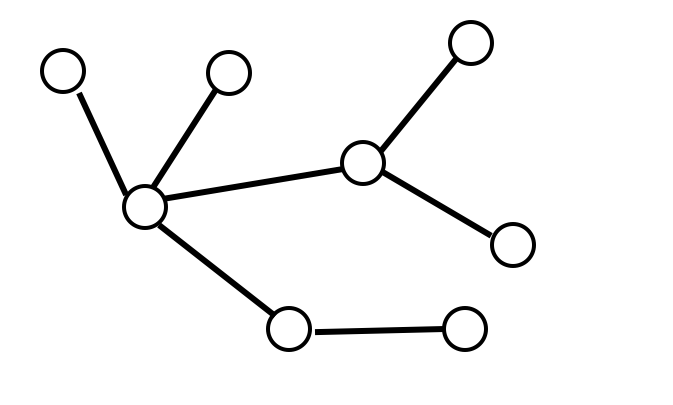}
    \caption{(a) A realization of a tree growth process with nodes labeled in order of arrival. (b) The shape of the process in (a) with node labels removed.  The main objective of the paper is to infer properties of the tree in (a) based only on the observed shape in (b).}
    \label{fig:illustration}
\end{figure}

In epidemiological applications, the infection history is important for enacting measures which prevent further spread, such as quarantining and testing.  But in many cases the complete infection history is not fully observable.  In the 2020 outbreak of Covid-19, for example, long incubation periods and asymptomatic spreading leads to incompleteness in the observed infection history (i.e., some or many edges in the infection tree are unobserved) as well as uncertainty about the direction of spread for those edges which are observed, as shown in Figure \ref{fig:illustration}(b).  Here we consider the problem of inferring properties of the disease history based on such partial observations of the infection spread.  Properties of special importance include inference of the root node (so-called `patient-zero') and inference of infection time.  We develop our methods in the specific context in which the contact pattern has been observed (i.e., the `shape' of the infection tree is known but its directions are not).  We discuss some relaxations to this assumption and limitations of our methods in Section \ref{section:discussion}.

Suppose that a disease has been transmitted among $n\geq1$ individuals (as illustrated in Figure \ref{fig:illustration}(a)) but only the contact pattern is observed (as in Figure \ref{fig:illustration}(b)).  We propose a methodological framework for answering general inference questions about the infection spread based on the observed contact pattern.  In the context of disease spread, important relevant questions include inference of so-called `patient-zero' (i.e., the initial source of infection) and also the time at which specific individuals became infected.  Our proposed method efficiently computes the conditional probability of the disease transmission history given the observed contact process.  These conditional probabilities enable us to construct valid confidence sets for the inference questions of interest.  These inference procedures may be applied in social media networks to help identify key spreaders and promoters of fake news or applied on the infection pattern of an epidemic (often obtained from contact tracing) 
to localize patient-zero or to reconstruct the chain of infection~\citep{hens2012robust, keeling2005networks}.

%{\color{red}
%[REMOVE OR INTEGRATE INTO LESS TECHNICAL INTRO?]

%{\color{blue} Agree; should remove}

%For $n\geq1$, suppose that we observe only the final contact pattern implied by $\tree_n$, i.e., the undirected tree induced by removing the orientations from each edge of $\tree_n$ (or the unlabeled tree obtained by removing node labels).  Denoting this contact pattern by $\sh(\tree_n)$, the {\em shape of $\tree_n$}, we consider general inference questions about functions of the unobserved tree history from only the observed unlabeled network $\sh(\tree_n)$. For example, we may infer which node of the observed shape $\sh(\tree_n)$ corresponds to the root of the unobserved history by constructing a confidence set $C_\epsilon$ of the nodes such that $C_\epsilon$ contains the true root with at least $1-\epsilon$ probability. As another example, given a particular node $u$ of $\sh(\tree_n)$, we may infer the arrival time of node $u$, i.e. time at which $u$ was infected, by constructing a confidence set of the potential times $\{1,\ldots, n\}$. These inference procedures may be applied in social media networks to help identify key spreaders and promoters of fake news or applied on the infection pattern of an epidemic (often obtained from contact tracing) 
%to localize patient-zero or to reconstruct the chain of infection~\citep{hens2012robust, keeling2005networks}. 

%}

\subsection{Literature review}
Researchers in statistics~\citep{kolaczyk2009statistical}, computer science~\citep{bollobas2001degree}, engineering, and physics~\citep{callaway2000network} have studied the probabilistic properties of various random growth processes of networks, including popular models such as the preferential attachment model~\citep{barabasi1999emergence}. A line of research in the physics and engineering literature explores the problem of full or partial recovery of the network history based on a final snapshot \citep{young2019phase, cantwell2019recovering, timar2020choosing, sreedharan2019inferring, magner2018times}.  However, the problem of statistical inference on the history of a network growth process has been studied only recently.  In statistics, most existing work focuses on the problem of root estimation~\citep{shah2011rumors, shah2016finding, fioriti2014predicting, shelke2019source} and root inference~\citep{bubeck2017finding}. The latter work by \cite{bubeck2017finding} shows that one can construct a confidence set of the root whose size does not increase with the size of the network $n$. These directions are further developed by \cite{bubeck2017trees}, by \cite{lugosi2019finding} and \cite{devroye2018discovery}, who consider inference on a seed tree, by
\cite{shah2016finding}, who analyze situations where consistent root estimation is possible, and by~\cite{khim2017confidence}, who extend the results of \cite{bubeck2017finding} to the setting of uniform attachment on a $D$-regular tree. More recently, \cite{banerjee2020root} derives tight bounds on the size of confidence set of the root constructed from Jordan centrality. These results, although theoretically sound, do not give a practical method for inferring the root of a tree based on an observed contact pattern. For example, the confidence set algorithms in \cite{bubeck2017finding} and \cite{khim2017confidence} (described in detail in Section~\ref{sec:prev-work}) only give asymptotic coverage guarantees that hold under specific model assumptions and tend to be too conservative for practical purposes; see the numerical examples in Section~\ref{sec:simulation} for further discussion. 

\subsection{Summary of Approach}

In this paper, we address the above issues by proposing a new approach for inference on the history of a randomly growing tree. Our approach is to randomly relabel the nodes of the observed shape to obtain a random tree $\tilde{\mathbf{T}}_n$ whose labels are random but whose shape corresponds to that of the observed contact pattern. Random relabeling thus induces a latent sequence of subtrees $\tilde{\mathbf{T}}_1 \subset \tilde{\mathbf{T}}_2 \subset \ldots \tilde{\mathbf{T}}_{n-1} \subset \tilde{\mathbf{T}}_n$ which represents the history of $\tilde{\mathbf{T}}_n$. We study the conditional distribution of the history $\mathbb{P}(\tilde{\mathbf{T}}_1, \ldots, \tilde{\mathbf{T}}_{n-1} \,|\, \tilde{\mathbf{T}}_n)$ and show that the conditional distribution is tractable under a  \emph{shape-exchangeability} property, a distributional invariance property which is satisfied by the most common instances of the preferential attachment models including linear preferential attachment, uniform attachment and $D$-regular uniform attachment. 

For most of the paper, we focus on the problem of root inference, where our proposed method has a coverage guarantee for all $n$ and produces informative and computationally efficient confidence sets even for trees of millions of nodes. However, our approach is also applicable to a wide class of other inference problems such as arrival time inference and inference of the initial subtree given the final shape. 

\subsection{Summary of Methodology}

Although the problem of inferring the root node is conceptually simple, we need a number of technical definitions in the main paper to address subtle complications that arise from a formal analysis. To aid readers who are interested in a conceptual understanding, we first provide a short informal discussion of our methodology. 

For a given tree $\tilde{\mathbf{t}}_n$ and a node $u \in V(\tilde{\mathbf{t}}_n)$, we can think of a single realization of how the tree ``grew" from node $u$ as an ordered sequence of the nodes $\mathbf{u}_{1:n} = (u_1, \ldots, u_n)$ where $u_1 = u$. Not all ordered sequences are possible since the tree must be connected at all times and so we define $\text{hist}(u, \tilde{\mathbf{t}}_n)$ as the subset of allowable permutations of the nodes that starts at node $u$ and results in tree $\tilde{t}_n$ (see Figure~\ref{fig:random_label}). Interestingly, for a class of preferential attachment tree models including both uniform attachment and linear preferential attachment (Examples~\ref{example:uniform} and~\ref{example:linear-PA}), the conditional distribution of the growth realization $\mathbb{P}(\tilde{\mathbf{T}}_1, \ldots, \tilde{\mathbf{T}}_{n-1} \,|\, \tilde{\mathbf{T}}_n = \tilde{\mathbf{t}}_n)$ is uniform over the set of all allowable sequences $\cup_v \text{hist}(v, \tilde{\mathbf{t}}_n)$ (Proposition~\ref{prop:uniform-conditional} and Theorem~\ref{thm:pa-shape-exchangeable}). Moreover, the cardinality $\# \text{hist}(u, \tilde{\mathbf{t}}_n) $ can be computed in linear time (Proposition~\ref{prop:count-rooted-history}). Therefore, we can compute the conditional root probability $\mathbb{P}( \tilde{\mathbf{T}}_1 = \{u\} \,|\, \tilde{\mathbf{T}}_n = \tilde{\mathbf{t}}_n) = \frac{\# \text{hist}(u, \tilde{\mathbf{t}}_n)}{\sum_v \# \text{hist}(v, \tilde{\mathbf{t}}_n)}$, as the fraction of all allowable permutations that starts at node $u$, and construct a Bayesian credible set by taking the nodes with the highest conditional root probabilities. We prove that such a credible set also has valid Frequentist coverage at exactly the same level (Theorem~\ref{thm:frequentist-coverage}). \\

The paper is organized as follows.  In Section \ref{section:definition} we define the problem and review some earlier work.  In Section \ref{sec:approach} we describe our approach using random labeling and the shape exchangeability condition.  In Section \ref{section:general} we discuss general inference problems for observed contact patterns.  In Section \ref{sec:experiments} we show some simulation studies and illustrate our methods on flu data from a London school.

%More precisely, let $f:\Tn\to\mathbb{R}^d$ be a statistic of the tree history and let $\TreeN$ be a random tree history of size $N\geq1$.  We seek valid inference for $f(\TreeN)$ given the observation $\sh(\TreeN)$.

%Specific choices of $f$ that are of interest include $f:\TN\to\Rb^N$

\subsection{Notation:} For $n \in \mathbb{N}$, we write $[n] := \{1, \ldots, n\}$ and $[n]_0:=\{0,1,\ldots,n\}$. We let $\mathbf{t} = (V,E)$ denote a labeled tree where $V$ is the finite set of nodes (generally taken to be $[n]$ where $n$ is the size of the tree) and $E \subset V \times V$ is the set of edges. 

For two finite sets $A, B$ of the same size, we write $\text{Bi}(A, B)$ as the set of all bijections between $A$ and $B$. For two labeled trees $\mathbf{t}, \mathbf{t}'$, we write $\pi \mathbf{t} = \mathbf{t}'$ for $\pi \in \text{Bi}(V(\mathbf{t}), V(\mathbf{t}'))$ if $(u, v)$ is an edge in $\mathbf{t}$ if and only if $(\pi(u), \pi(v))$ is an edge in $\mathbf{t}'$. In this case, we say that $\pi$ is an isomorphism between $\mathbf{t}$ and $\mathbf{t}'$. We note that for any labeled tree $\mathbf{t}$ and any bijection $\pi \in \text{Bi}(V(\mathbf{t}), V(\mathbf{t}))$, $\pi \mathbf{t}$ is always a valid tree that represents the result of relabeling the nodes of $\mathbf{t}$ by $\pi$. For a labeled tree $\mathbf{t}$ and a subset of nodes $B \subset V(\mathbf{t})$, we write $\mathbf{t} \cap B$ as the (possibly disconnected) subgraph of $\mathbf{t}$ restricted only to the nodes in $B$. 

Throughout the paper, we use bold upper-case letters such as $\mathbf{T}$ to denote a random tree and bold lower-case letters such as $\mathbf{t}$ to denote a fixed tree. For any two random objects $X, Y$, we write $X \stackrel{d}{=} Y$ if they are equal in distribution.

\section{Model and Problem Definition}\label{section:definition}

\subsection{Markovian tree growth processes}

Following the terminology used in discrete mathematics, we define a labeled tree $\mathbf{t}_n$ with $n$ nodes as a \emph{recursive tree} if $V(\mathbf{t}_n) = [n]$ and if the subtree $\mathbf{t}_n \cap [k]$, obtained by removing all nodes except those with labels $\{1, 2, \ldots, k\}$, is connected for every $k \in [n]$. In other words, any path from node $1$ to any other node must be increasing in the node labels.    Equivalently, we may view a recursive tree $\mathbf{t}_n$ as a map $[n]\to[n]_0$ with $\tree_n(j)=i$ indicating that the parent of $j$ is $i$, for $i<j$, and $\tree_n(j)=0$ indicating that node $j$ is the root of $\tree_n$. Figure \ref{fig:illustration}(a) illustrates a recursive tree with 8 nodes.  We write $\mathcal{T}^R_n$ to denote the set of all recursive trees with $n$ nodes. 

For recursive trees $\mathbf{t}_k,\mathbf{t}_{k'}$ with node labels $[k]$ and $[k']$, respectively, for $k<k'$, we write $\mathbf{t}_k\subset\mathbf{t}_{k'}$ if $\mathbf{t}_{k'}\cap[k]=\mathbf{t}_{k}$.  That is, $\mathbf{t}_{k}\subset\mathbf{t}_{k'}$ indicates that $\mathbf{t}_k$ is the subtree of $\mathbf{t}_{k'}$ obtained by removing all nodes and edges from $\mathbf{t}_{k'}$ except those labeled in $[k]$.  For $k<k'$, we call $\mathbf{t}_{k}$ and $\mathbf{t}_{k'}$ {\em compatible} if $\mathbf{t}_{k}\subset\mathbf{t}_{k'}$.  A family $(\mathbf{t}_n)_{n\geq1}$ such that $\mathbf{t}_k\subset\mathbf{t}_{k'}$ for all $1\leq k<k'$ is called {\em mutually compatible}.

A {\em Markovian tree growth process} is a mutually compatible family of random recursive trees $\mathbf{T}=(\mathbf{T}_n)_{n\geq1}$ such that $\mathbf{T}_n\in\mathcal{T}^R_n$ for each $n\geq1$ and
\begin{equation}
\label{eq:tps}
\Pb\bigl(\Treen=\mathbf{t}_n\mid(\mathbf{T}_{k})_{1\leq k\leq n-1}=(\tree_k)_{1\leq k\leq n-1}\bigr)=p_n(\tree_{n-1},\tree_n),\quad n\geq1,
\end{equation}
for some family of transition probabilities $(p_n)_{n\geq1}$. Any such process $\mathbf{T}=(\mathbf{T}_1,\mathbf{T}_2,\ldots)$ can therefore be constructed by sequentially adding nodes at discrete times $k=2,3,\ldots$ according to these transition probabilities and labeling the nodes of $\mathbf{T}_n$ by their arrival time.

\begin{example}\label{example:uniform}
For a straightforward example of a growth process, define
\[p_k(\mathbf{t}_{k-1},\mathbf{t}_{k})=\left\{\begin{array}{cc} 1/(k-1),& \mathbf{t}_{k-1}\subset\mathbf{t}_{k},\\
0,& \text{otherwise,}
\end{array}\right.\quad k\geq2,\]
so that new nodes attach to existing nodes uniformly at each time.  The resulting process is called the {\em uniform attachment (UA) model}. We note that in discrete probability, UA trees are called "random recursive trees" because it is uniform over the set of all distinct recursive trees (see e.g. \citet{drmota2009random}). We use the term "random recursive tree" in a more general sense in that we allow a random recursive tree to have any Markovian distribution. 
\end{example}

\begin{example}\label{example:linear-PA}
For another popular example, let $\text{deg}(w,\mathbf{t}_k)$ denote the {\em degree} of node $w$ in $\mathbf{t}_k$ and for $\mathbf{t}_{k-1}\subset\mathbf{t}_k$ let $w^*(\mathbf{t}_{k-1},\mathbf{t}_k)$ denote the unique node of $\mathbf{t}_{k-1}$ to which the node labeled $k$ connects to form $\mathbf{t}_k$.  We start with one node, attach another node to it, and then define transition probabilities by
\[
p_k(\mathbf{t}_{k-1},\mathbf{t}_k)=\left\{\begin{array}{cc}
=\frac{\text{deg}(w^*(\mathbf{t}_{k-1},\mathbf{t}_k), \mathbf{t}_{k-1})}{2(k-2)},&\mathbf{t}_{k-1}\subset\mathbf{t}_{k},\\
0,& \text{otherwise,}
\end{array}\right.\quad k \geq 3.\]
The resulting tree process is called the {\em linear preferential attachment model} (LPA), as nodes with high degree tend to accumulate more connections. In discrete probability, LPA trees are also referred to as random plane-oriented recursive trees (see e.g.~\citet{drmota2009random}). 
\end{example}

\begin{example}\label{example:PA}
To generalize the previous two examples, we define a general class of {\em preferential attachment (PA)} processes indexed by a fixed function $\phi \,:\, \mathbb{N} \rightarrow [0,\infty)$ as follows.  For short, we call these {\em $\text{PA}_\phi$ processes}. Given such a function $\phi$, we generate $\mathbf{T}_1$ as a singleton node with label $1$ and, for each $n = 2, 3, \ldots $, generate $\mathbf{T}_n$ as a random tree where we
\begin{enumerate}
\item choose an existing node $w \in [n-1]$ of $\mathbf{T}_{n-1}$ with probability proportional to $\phi(\text{deg}(w, \mathbf{T}_{n-1}))$, where $\text{deg}(w,\mathbf{T}_{n-1})$ denotes the degree of node $w$ in $\mathbf{T}_{n-1}$, 
%$\frac{ \phi(\text{deg}(w, \mathbf{T}_{n-1}))}{ \sum_{u \in [n-1]} \phi(\text{deg}(u, \mathbf{T}_{n-1}))}$,
\item and add edge $(n, w)$ to tree $\mathbf{T}_{n-1}$ to form $\mathbf{T}_n$.
\end{enumerate}
The resulting transition probabilities of this process thus satisfy
\[
p_k(\mathbf{t}_{k-1},\mathbf{t}_k)\propto\left\{\begin{array}{cc}
\phi(\text{deg}(w^*(\mathbf{t}_{k-1},\mathbf{t}_k), \mathbf{t}_{k-1})),&\mathbf{t}_{k-1}\subset\mathbf{t}_{k},\\
0,& \text{otherwise,}
\end{array}\right.\quad k\geq2.\]

Common examples of the function $\phi$ include (a) uniform attachment where $\phi(d) = 1$ for all $d \in \mathbb{N}$ (as in Example \ref{example:uniform}), (b) linear preferential attachment where $\phi(d) = d$ (as in Example \ref{example:linear-PA}), (c) uniform attachment on a $D$-regular tree where $\phi(d) = \max(0, D - d)$ for some $D \geq 2$, and (d) sublinear preferential attachment where $\phi(d) = d^\gamma$ for some $\gamma \in (0, 1)$. We also note that \cite{gao2017consistent} studies the estimation of the parameter function $\phi$. 
\end{example}

For labeled trees $\mathbf{t}, \mathbf{t}'$, not necessarily recursive, we write $\mathbf{t} \sim \mathbf{t}'$ if there exists a $\pi \in \text{Bi}(V(\mathbf{t}), V(\mathbf{t}'))$ such that $\pi \mathbf{t} = \mathbf{t}'$. We define the \emph{shape} of $\mathbf{t}$ as the equivalence class of all $\mathbf{t}'$ that are equivalent to $\mathbf{t}$ up to some relabeling, 
\[
\text{sh}(\mathbf{t}) := \{ \mathbf{t}' \text{ labeled tree} \,:\, \mathbf{t}' \sim \mathbf{t} \}.
\]
A shape is typically referred to as an ``unlabeled tree" or just a ``tree". We prefer the term ``shape" here to emphasize that $\text{sh}(\mathbf{t})$ does not refer to a single tree but rather to an equivalence class of all trees with a given structure.  Figure \ref{fig:illustration}(b) shows the `shape' of the labeled tree in Figure \ref{fig:illustration}(a).  In our analysis, this `shape' represents the set of all trees that produce the same structure after removing labels.

%Supposing we observe the shape $\text{sh}(\mathbf{T}_n)$ for a random tree $\mathbf{T}_n$ from a Markovian growth process, our goal is to draw inferences about the unobserved node labels of $\mathbf{T}_n$. For example, we may ask which node of $\text{sh}(\mathbf{T}_n)$ corresponds to the root node $\mathbf{T}_1$, that is, which node is labeled $1$.  

Since $\text{sh}(\mathbf{t}_n)$ is an equivalence class, we need to first define what it means to refer to the nodes of $\text{sh}(\mathbf{t}_n)$. To that end, we let $\mathcal{U}_n$ be an alphabet of $n$ distinct letters and represent the shape $\text{sh}(\mathbf{t}_n)$ by an arbitrary labeled tree $\mathbf{t}^*_n \in \text{sh}(\mathbf{t}_n)$ with node labels $\mathcal{U}_n$. We may now refer to the nodes of $\text{sh}(\mathbf{t}_n)$ through its alphabetically labeled representation $\mathbf{t}^*_n$. For convenience, we define $\tilde{\mathcal{T}}_n$ as the set of all labeled trees with $n$ nodes whose labels take values in the alphabet $\mathcal{U}_n$. 

We note that it is necessary to work with a labeled representation of a shape because (i) our inference questions make reference to properties of specific nodes (e.g., a particular node being the root) and (ii) computer programs require as input a labeled tree instead of an equivalent class of labeled trees. Because the choice of the labeled representation is arbitrary, we only consider inference methods that are independent of the choice of the representation. We formalize this with the notion of labeling-equivariance in Remark~\ref{rem:label-equivariance}.

\subsection{Inference Problem}
\label{sec:inference-problem}

Let $\mathbf{T}=(\mathbf{T}_k)_{k\geq1}$ be a Markovian growth process for which we observe only the unlabeled shape $\text{sh}(\mathbf{T}_n)$ for some $n\geq1$.
%Let $\mathbf{T}_n$ be a random recursive tree distributed according to a Markovian growth process for which we observe the unlabeled shape $\text{sh}(\mathbf{T}_n)$. 
For a labeled representation $\mathbf{T}^*_n \in \text{sh}(\mathbf{T}_n)$ whose node labels take values in $\mathcal{U}_n$, we say that the \emph{root} of $\mathbf{T}_n$ is $v$ if the node labeled $v \in \mathcal{U}_n$ in the labeled representation $\mathbf{T}^*_n$ corresponds to the root of $\mathbf{T}_n$. More precisely, letting $\rho \in \text{Bi}([n], \mathcal{U}_n)$ be an unobserved bijection such that $\rho \mathbf{T}_n = \mathbf{T}^*_n$, we define
\begin{align}
\text{root}_\rho(\mathbf{T}_{n}) := \rho \mathbf{T}_1 = \rho(1) \in \mathcal{U}_n.
\label{eqn:root-node}
\end{align}

\begin{remark}
It is important to note that the root node depends on the choice of the isomorphism $\rho$; there could exist $\rho' \in \text{Bi}([n], \mathcal{U}_n)$ such that $\rho' \mathbf{T}_n = \mathbf{T}^*_n$ and that $\rho'(1) \neq \rho(1)$; see Figure~\ref{fig:root_isomorphism}. This is because multiple nodes of an unlabeled shape $\sh(\mathbf{T}_n)$ can be indistinguishable (see \eqref{eq:equivalent-nodes} in Section~\ref{sec:likelihood} for a formal definition of indistinguishable nodes). We show in  Remark~\ref{rem:label-equivariance} that this issue does not pose a problem to our inference procedure as long as we only consider \emph{labeling-equivariant} confidence sets. 
\end{remark}

\begin{figure}
    \centering
    \includegraphics[scale=.4]{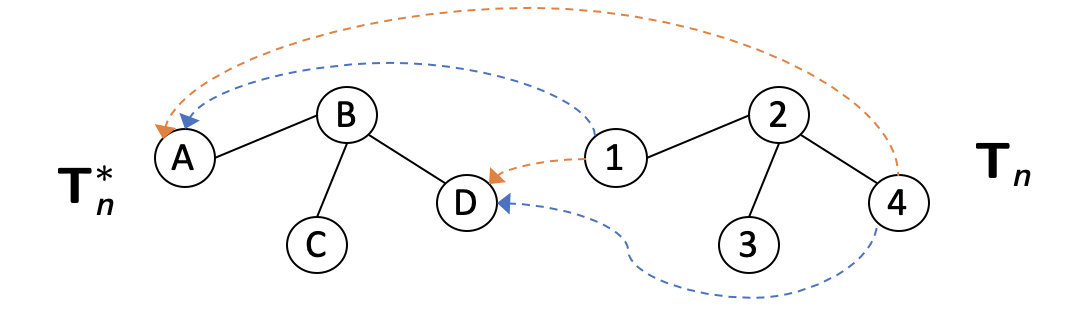}
    \caption{Let $\rho = (2 \mapsto B,\; 3 \mapsto C,\; 1 \mapsto A,\; 4 \mapsto D)$ (blue in figure) and let $\rho' =  (2 \mapsto B,\; 3 \mapsto C,\; 1 \mapsto D,\; 4 \mapsto A)$ (orange in figure).  Both $\rho$ and $\rho'$ are isomorphisms but we have that $\text{root}_\rho(\mathbf{T}_n) = A$ and $\text{root}_{\rho'}(\mathbf{T}_n) = D$.}
    \label{fig:root_isomorphism}
\end{figure}

Formally, the problem of root node inference is to construct, for given a confidence level $\epsilon \in (0,1)$, a confidence set $C_{\epsilon}(\mathbf{T}^*_n) \subset \mathcal{U}_n$ such that
\begin{align}
\mathbb{P}\bigl\{ \text{root}_\rho(\mathbf{T}_{n}) \in C_{\epsilon}( \mathbf{T}^*_n) ) \bigr\} \geq 1 - \epsilon. 
\label{eq:confidence-coverage}
\end{align}
As a trivial solution is to let $C_\epsilon$ be the set of all nodes $\mathcal{U}_n$, an important aspect of the inference problem is to make the confidence set $C_\epsilon(\cdot)$ as small as possible while still maintaining valid coverage~\eqref{eq:confidence-coverage}. We note that the root node cannot be consistently estimated since, for a tree of 2 nodes, it is impossible to distinguish which one is the root. 

\begin{remark}
\label{rem:label-equivariance}
Since our observation is the unlabeled network $\text{sh}(\mathbf{T}_n)$, it is natural to require the confidence set $C_{\epsilon}(\cdot)$ to be labeling-equivariant so it does not depend on the choice of the labeled representation $\mathbf{T}^*_n$. More precisely, for any $\tau \in \text{Bi}(\mathcal{U}_n, \mathcal{U}_n)$, we require
\begin{align}
\tau( C_{\epsilon}(\mathbf{T}^*_n)) = C_\epsilon(\tau \mathbf{T}^*_n), \label{eqn:label-equivariant}
\end{align}
where $\tau(C_\epsilon(\mathbf{T}^*_n))$ is the set containing the image of all members of $C_\epsilon(\mathbf{T}^*_n)$ under $\tau$.
In particular, if $\tau$ is an automorphism in the sense that $\tau \mathbf{T}^*_n = \mathbf{T}^*_n$, we require $C_\epsilon(\mathbf{T}^*_n)$ to be invariant with respect to $\tau$. 

The definition of the root node~\eqref{eqn:root-node} relies on a particular isomorphism $\rho$. However, 
for a labeling-equivariant confidence set, as in~\eqref{eqn:label-equivariant}, the probability of coverage~\eqref{eq:confidence-coverage} does not depend on this choice of the isomorphism. Indeed, if we write $u \in \mathcal{U}_n$ as the root node $\text{root}_\rho(\mathbf{T}_n)$ with respect to $\rho \in \text{Bi}([n], \mathcal{U}_n)$, then, for any $\tau \in \text{Bi}(\mathcal{U}_n, \mathcal{U}_n)$, the node labeled $\tau(u)$ is the root node under an alternative labeling $\rho \circ \tau$ and we see that $u = \text{root}_\rho(\mathbf{T}_n) \in C_\epsilon(\rho \mathbf{T}_n)$ if and only if $\tau(u) = \text{root}_{\tau \circ \rho}(\mathbf{T}_n) \in C_\epsilon((\tau \circ \rho) \mathbf{T}_n)$.

An unlabeled shape may contain indistinguishable nodes that are given different labels in a labeled representation. For example, the nodes labeled $A,C,D$ in the labeled tree in Figure~\ref{fig:root_isomorphism} are indistinguishable. However, any set of indistinguishable nodes must be either all included in or all excluded from a labeling-equivariant confidence set by the fact that such confidence sets are invariant with respect to automorphisms.
\end{remark}

Although we focus on root inference, the approach that we develop are applicable to more general inference problems such as inferring the arrival time of a particular node. We describe these in detail in Section~\ref{section:general}.

\subsection{Previous work on root inference}
\label{sec:prev-work}
For the problem of root node inference, \cite{bubeck2017finding} consider procedures that assign a centrality score to each node of the observed shape and then take the largest $K(\epsilon)$ nodes to be the confidence set where $K(\epsilon)$ is a size function whose value depends on the underlying distribution.  More precisely, let $\text{sh}(\mathbf{T}_n)$ be the observed shape with labeled representation $\mathbf{T}^*_n$ whose node labels take values in $\mathcal{U}_n$. Let $\psi \,:\, \mathcal{U}_n \times \tilde{\mathcal{T}}_n \rightarrow [0, \infty)$ be a scoring function and let $K(\epsilon)$ be an integer for any $\epsilon \in (0,1)$. Assuming that the nodes $u_1, u_2, \ldots, u_n \in \mathcal{U}_n$ are sorted so that $\psi(u_1, \mathbf{T}^*_n) \geq \psi(u_2, \mathbf{T}^*_n) \geq \ldots \geq \psi(u_n, \mathbf{T}^*_n)$, define $C_{K(\epsilon), \psi}(\mathbf{T}^*_n) := \{u_1, \ldots, u_{K(\epsilon)} \}$. The $\psi$ function is labeling-equivariant in that for any $\tau \in \text{Bi}(\mathcal{U}_n, \mathcal{U}_n)$, we have $\psi(u, \tilde{\mathbf{t}}_n) = \psi(\tau(u), \tau \tilde{\mathbf{t}}_n)$. The induced confidence set $C_{K(\epsilon), \psi}$ is therefore also labeling-equivariant. 

With these definitions, \citet[][Theorem~5]{bubeck2017finding} show that if the random recursive tree $\mathbf{T}_n$ has the uniform attachment distribution, then there exists a function $\psi$ such that, with any $K(\epsilon) \geq a \exp \biggl( \frac{b \log(1/\epsilon)}{\log \log (1/\epsilon)} \biggr)$ for universal constants $a,b > 0$, we have
\[
\liminf_{n \rightarrow \infty} \mathbb{P}\bigl( \text{root}_{\rho}(\mathbf{T}_n) \in C_{K(\epsilon), \psi}(\mathbf{T}^*_n) \bigr) \geq 1 - \epsilon.
\]
In other words, so long as $K(\epsilon)$ is large enough, $C_{K(\epsilon), \psi}(\mathbf{T}^*_n)$ has asymptotic confidence coverage of $1-\epsilon$. If $\mathbf{T}_n$ is distributed according to the preferential attachment distribution, then \citet[][Theorem~6]{bubeck2017finding} show that there exists $\psi$ such that $C_{K(\epsilon), \psi}$ has asymptotic coverage when $K(\epsilon) \geq C \frac{\log^2(1/\epsilon)}{\epsilon^4}$ for some universal constant $C > 0$. Lower bounds on the size $K(\epsilon)$ are also provided. We also note that after the completion of this manuscript, \citet[][Theorem~3.2]{banerjee2020root} further improved the upper bound on $K(\epsilon)$ to $\frac{C_1}{\epsilon^2} \exp \bigl( \sqrt{ C_2 \log \epsilon^{-1} } \bigr)$ in the linear preferential attachment setting.

If $\mathbf{T}_n$ is distributed according to uniform attachment on $D$-regular trees for some $D \geq 3$, then \citet[][Corollary~1]{khim2017confidence} shows that there exists $\psi$ such that $C_{K(\epsilon), \psi}$ has asymptotic coverage when $K(\epsilon) \geq C_D/\epsilon$ for some constant $C_D > 0$ depending only on $D$. 

These above results are surprising in that as the size of the tree increases, the size $K(\epsilon)$ of the confidence set can remain constant---the intuition for this being that the ``center" of the growing tree does not move significantly as new nodes arrive~\citep{jog2018persistence, jog2016analysis,  bubeck2015influence}. However, these results have a number of shortcomings that make them impractical for real applications. First, the confidence guarantee is asymptotic and does not hold in .  Second, the bound on the size $K(\epsilon)$ is theoretical and too conservative to be useful. As we show in Figures~\ref{tab:conf_size2} and~\ref{tab:conf_size3}, the bound $K(\epsilon)$ given in each case tends to be excessively conservative even for relatively large values of $n$.  Third, it is necessary to know the model of the random recursive tree $\mathbf{T}_n$ in order to choose the correct size $K(\epsilon)$. In the next section, we present an alternative approach to constructing confidence sets for the root node that addresses all these shortcomings.

The choice of the scoring function $\psi$ is crucial. The ideal choice is the likelihood, which is complicated to express so we defer its formal definition to equation~\eqref{eq:likelihood-definition} in Section~\ref{sec:likelihood} to avoid interrupting the flow of exposition. \cite{bubeck2017finding} remark that the likelihood is computationally intensive and analyzes a relaxation instead. One such relaxation is based on taking products of the sizes of the subtrees (termed by \cite{shah2011rumors} as \emph{rumour centrality}) and it plays an important role in our approach; see~\eqref{eq:conditional-root-probability} for a precise definition. Interestingly, one implication of our work is that this product-of-subtree-sizes relaxation in fact induces the same ordering of the nodes as the true likelihood so that the confidence set constructed from the relaxation is the same as that from the true likelihood. 

\section{Approach}\label{sec:approach}

In this section, we describe our approach to root inference through the notion of label randomization and shape exchangeability. In Section~\ref{section:general}, we show that the same approach applies to various other inference problems as well. 

\subsection{Random Labeling}

Let $\mathbf{T}_n$ be a random recursive tree and let $\text{sh}(\mathbf{T}_n)$ be its shape. Given $\mathbf{T}_n$ and any alphabetically labeled representation $\mathbf{T}^*_n \in\text{sh}(\mathbf{T}_n)$, we may independently generate a random bijection $\Lambda$ uniformly in $\text{Bi}(\mathcal{U}_n, \mathcal{U}_n)$ and apply it onto $\mathbf{T}^*_n$ to obtain a randomly labeled tree $\tilde{\mathbf{T}}_n := \Lambda\mathbf{T}^*_n$.  We note here that the resulting object satisfies $\tilde{\mathbf{T}}_n \stackrel{d}{=} \Pi\mathbf{T}_n$ where $\Pi\in\text{Bi}([n],\mathcal{U}_n)$ is another uniform random bijection chosen independently of $\mathbf{T}_n$ and $\Lambda$.  In particular, the marginal distribution of $\tilde{\mathbf{T}}_n$ does not depend on the choice of representative $\mathbf{T}^*_n \in\text{sh}(\mathbf{T}_n)$.  To fix notation, from now on we write $\tilde{\mathbf{T}}_n$ to denote a random labeled tree generated in this way.

\begin{figure}
\centering
\includegraphics[scale=.45]{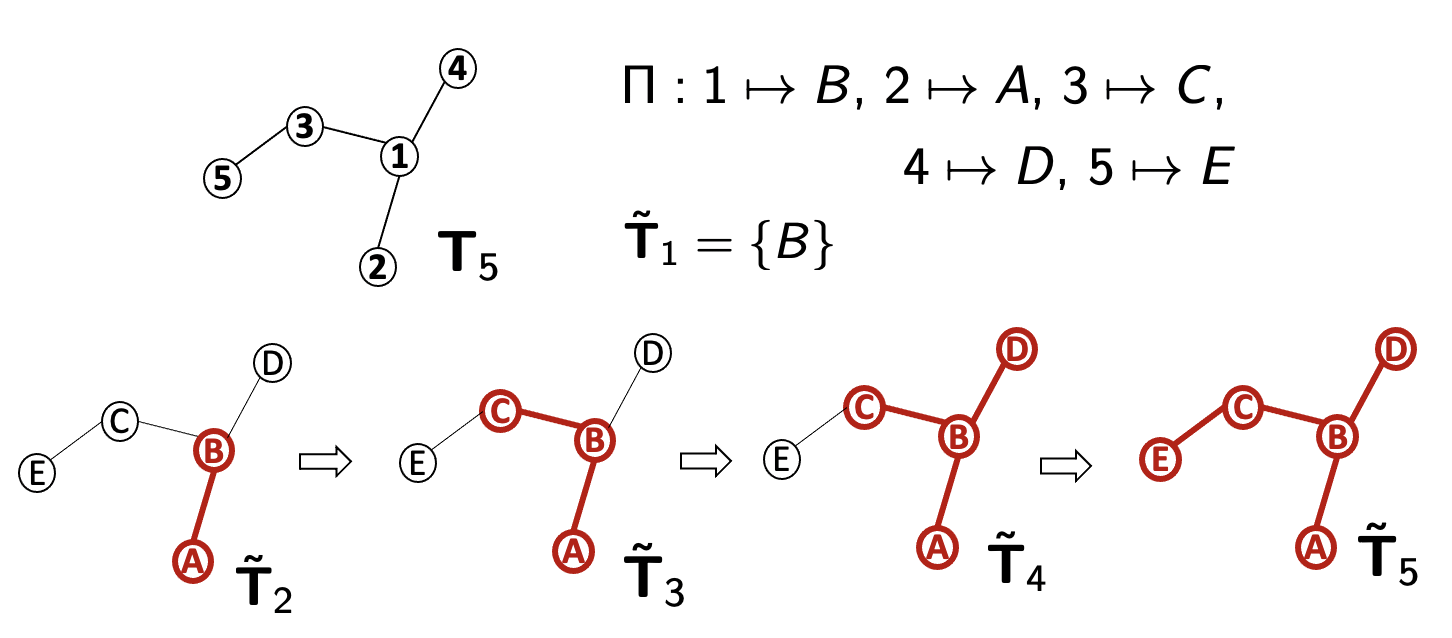}
    \caption{Illustration of label randomization with the corresponding history.}
    \label{fig:random_label}
\end{figure}

%{\color{red} {\color{blue} Agreed.}
%(can omit?)
%For any labeled representation  ${\mathbf{t}}_n$ of $\text{sh}(\mathbf{T}_n)$, we may independently generate a random bijection $\Lambda$ uniformly in $\text{Bi}(\mathcal{U}_n, \mathcal{U}_n)$ and apply it onto $\mathbf{T}^*_n$ to obtain a randomly labeled tree $\tilde{\mathbf{T}}_n := \Lambda \mathbf{T}^*_n = (\Lambda \circ \rho) \mathbf{T}_n$ where $\rho \in \text{Bi}([n], \mathcal{U}_n)$ is any label bijection such that $\rho \mathbf{T}_n = \mathbf{T}^*_n$. We write $\Pi := \Lambda \circ \rho$. Since $\Pi$ is an independent random bijection drawn uniformly from $\text{Bi}([n], \mathcal{U}_n)$, we see that $\tilde{\mathbf{T}}_n = \Pi \mathbf{T}_n$ is the result of relabeling the nodes of $\mathbf{T}_n$ by elements of $\mathcal{U}_n$ chosen uniformly without replacement and its distribution depends only on the distribution of the shape $\text{sh}(\mathbf{T}_n)$ and not on the choice of the labeled representation $\mathbf{T}^*_n$ (and hence $\rho$).}

For any $k \in [n]$, we also define $\tilde{\mathbf{T}}_k :=
\Pi|_{[k]} \mathbf{T}_k$, where we interpret $\Pi|_{[k]}$ as the
domain restriction of the random bijection $\Pi$ defined on $[k]$.  In
this way, $\tilde{\mathbf{T}}_1 \subset \tilde{\mathbf{T}}_2 \subset
\ldots \subset \tilde{\mathbf{T}}_n$. Since each of the random
subtrees $\tilde{\mathbf{T}}_k$ has the same shape as $\mathbf{T}_k$,
the random shape $\text{sh}(\tilde{\mathbf{T}}_k)$ has the same
distribution as $\text{sh}(\mathbf{T}_k)$. In particular, the label of
the singleton $\tilde{\mathbf{T}}_1$ is drawn uniformly from
$\mathcal{U}_n$ and each subsequent node added at time $k$ is drawn
uniformly from $\mathcal{U}_n \backslash
V(\tilde{\mathbf{T}}_{k-1})$. We illustrate the definitions associated
with label randomization in Figure~\ref{fig:random_label}.

We can interpret label randomization as an augmentation of the probability space. An outcome for a Markov tree growth process is a recursive tree $\mathbf{t}_n \in \mathcal{T}_n^R$ whereas an outcome for the label randomized sequence $\tilde{\mathbf{T}}_1 \subset \tilde{\mathbf{T}}_2 \subset \ldots \subset \tilde{\mathbf{T}}_n$ is a pair $(\mathbf{t}_n, \pi)$ where $\mathbf{t}_n$ is a recursive tree and $\pi$ is a bijection from $[n]$ to $\mathcal{U}_n$. By defining $\tilde{\mathbf{t}}_k := \pi|_{[k]} \mathbf{t}_k$, we obtain a bijective correspondence between a pair $(\mathbf{t}_n, \pi)$ and a sequence of nested subtrees $\tilde{\mathbf{t}}_1 \subset \tilde{\mathbf{t}}_2 \subset \ldots \subset \tilde{\mathbf{t}}_n$ where $V(\tilde{\mathbf{t}}_n) = \mathcal{U}_n$. 
We define any such sequence of nested subtrees (equivalently any pair $(\mathbf{t}_n, \pi)$) as a \emph{tree history}, or simply a {\em history} for short. Intuitively, $\pi^{-1} \in \text{Bi}(\mathcal{U}_n, [n])$ is the isomorphism that gives the ordering information of the nodes in $\tilde{\mathbf{t}}_n$ as the node labels of $\tilde{\mathbf{t}}_n$ take value in the alphabet $\mathcal{U}_n$. Given the correspondence between the sequence $\tilde{\mathbf{t}}_1 \subset \tilde{\mathbf{t}}_2 \subset \ldots \subset \tilde{\mathbf{t}}_n$ and the pair $(\mathbf{t}_n,\pi)$, the probability of a history is given by
\begin{align}
    &\mathbb{P}( \tilde{\mathbf{T}}_1 = \tilde{\mathbf{t}}_1, \ldots, \tilde{\mathbf{T}}_{n-1} = \tilde{\mathbf{t}}_{n-1}, \tilde{\mathbf{T}}_n = \tilde{\mathbf{t}}_n) \nonumber \\
&= \mathbb{P}( \Pi \mathbf{T}_1 = \tilde{\mathbf{t}}_1, \ldots, \Pi \mathbf{T}_{n-1} = \tilde{\mathbf{t}}_{n-1}, \Pi \mathbf{T}_n = \tilde{\mathbf{t}}_n) \nonumber \\
&=\mathbb{P}( \mathbf{T}_1 = \pi^{-1} \tilde{\mathbf{t}}_1, \ldots, \mathbf{T}_{n-1} = \pi^{-1} \tilde{\mathbf{t}}_{n-1},  \mathbf{T}_n = \pi^{-1} \tilde{\mathbf{t}}_n \mid \Pi=\pi ) \mathbb{P}( \Pi = \pi) \nonumber \\
&=\mathbb{P}( \mathbf{T}_1 =  \mathbf{t}_1, \ldots, \mathbf{T}_{n-1} =  \mathbf{t}_{n-1},  \mathbf{T}_n = \mathbf{t}_n \mid \Pi_n=\pi ) \mathbb{P}( \Pi = \pi) \nonumber\\
&=\mathbb{P}( \mathbf{T}_1 =  \mathbf{t}_1, \ldots, \mathbf{T}_{n-1} =  \mathbf{t}_{n-1},  \mathbf{T}_n = \mathbf{t}_n  ) \mathbb{P}( \Pi = \pi) \label{eq:prob_single_history} 
\end{align}

To give a concrete example, let the random recursive tree $\mathbf{T}_n$ be generated from preferential attachment process $\text{PA}_\phi$. Then, the corresponding sequence of random trees $\tilde{\mathbf{T}}_1 \subset \tilde{\mathbf{T}}_2 \subset \ldots \subset \tilde{\mathbf{T}}_n$ have the distribution where $\tilde{\mathbf{T}}_1$ is a singleton node drawn uniformly from $\mathcal{U}_n$ and for $k \in \{2,3,\ldots,n\}$, we select a node $u \in V(\tilde{\mathbf{T}}_{k-1})$ with probability proportional to $\phi(\text{deg}(u, \tilde{\mathbf{T}}_{k-1}))$, select a node $v \in \mathcal{U}_n - V(\tilde{\mathbf{T}}_{k-1})$ uniformly at random, and add the edge $(u,v)$ to $\tilde{\mathbf{T}}_{k-1}$ to form $\tilde{\mathbf{T}}_k$. 

Suppose we have data in the form of a given unlabeled shape with an arbitrarily alphabetically labeled representation $\tilde{\mathbf{t}}_n$ whose node labels take values in $\mathcal{U}_n$. We may apply label randomization if necessary to assume without the loss of generality that $\tilde{\mathbf{t}}_n$ is an outcome of the randomly labeled tree $\tilde{\mathbf{T}}_n$. Our inference approach is based on the conditional distribution of a history
\begin{align}
\mathbb{P}(\tilde{\mathbf{T}}_1, \tilde{\mathbf{T}}_2, \ldots, \tilde{\mathbf{T}}_{n-1} \,|\, \tilde{\mathbf{T}}_n = \tilde{\mathbf{t}}_n). \label{eq:conditional-history-prob}
\end{align} 
For example, for $u \in \mathcal{U}_n$ and an observed shape $\mathbf{s}_n$ with alphabetically labeled representation $\tilde{\mathbf{t}}_n \in \mathbf{s}_n$, we may interpret $\mathbb{P}( \tilde{\mathbf{T}}_1 = \{u\} \,|\, \tilde{\mathbf{T}}_n = \tilde{\mathbf{t}}_n )$ as the probability of $u$ being the root node conditional on observing the shape $\mathbf{s}_n$. 

Label randomization is a data augmentation scheme that simplifies the analysis and the computation. We can define the conditional probability of a node being the root without the use of label randomization (see Remark~\ref{rem:unlabeled-conditional}) but we show in Theorem~\ref{thm:likelihood-equivalence} that the alternative definition is equivalent to the conditional root probability with label randomization. The calculation in \eqref{eq:prob_single_history} makes this approach precise.

Our proposed approach to either compute or approximate the conditional distribution of a history given a randomly labeled final state is especially natural for the purpose of developing a statistical framework which applies to a broad class of inference problems about the tree history.  This general strategy can also be found in approaches posed independently within other disciplines, as in the similar conditional probability-based approaches in the physics literature \citep{young2019phase, cantwell2019recovering, timar2020choosing} and the randomized labeling framework proposed for tree reconstruction algorithms in computer science   \citep{sreedharan2019inferring, magner2018times}.

\subsection{Root Inference}
\label{sec:root-inference}

With the definition of the conditional history~\eqref{eq:conditional-history-prob}, a natural approach for inferring the root is to construct a level $1-\epsilon$ \emph{credible} set by iteratively adding nodes with the largest conditional root probabilities until the sum of the conditional root probabilities among the nodes in the set exceed $1 - \epsilon$. More precisely, let our data be an unlabeled shape with an alphabetically labeled representation $\tilde{\mathbf{t}}_n$, that is, $\tilde{\mathbf{t}}_n$ has node labels in $\mathcal{U}_n$. We sort the nodes $u_1, \ldots, u_n \in \mathcal{U}_n$ of $\tilde{\mathbf{t}}_n$ such that 
\begin{align}
\mathbb{P}( \tilde{\mathbf{T}}_1 = \{u_1\} \,|\, \tilde{\mathbf{T}}_n = \tilde{\mathbf{t}}_n) \geq \mathbb{P}( \tilde{\mathbf{T}}_1 = \{u_2\} \,|\, \tilde{\mathbf{T}}_n = \tilde{\mathbf{t}}_n) \geq \ldots \geq \mathbb{P}( \tilde{\mathbf{T}}_1 = \{u_n\} \,|\, \tilde{\mathbf{T}}_n = \tilde{\mathbf{t}}_n), 
\label{eq:sorted_cond_root_probabilities}
\end{align}
and we define
\begin{align}
  K_{\epsilon}(\tilde{\mathbf{t}}_n) := \min \biggl \{ &k \in [n] \,:\, \sum_{i=1}^k \mathbb{P}( \tilde{\mathbf{T}}_1 = \{u_i\} \,|\, \tilde{\mathbf{T}}_n = \tilde{\mathbf{t}}_n) \geq 1 - \epsilon, \nonumber \\ 
  &\mathbb{P}(\tilde{\mathbf{T}}_1 = \{u_k\} \,|\, \tilde{\mathbf{T}}_n = \tilde{\mathbf{t}}_n) > \mathbb{P}(\tilde{\mathbf{T}}_1 = \{u_{k+1}\} \,|\, \tilde{\mathbf{T}}_n = \tilde{\mathbf{t}}_n) \biggr\}.
  \label{eq:size_K_definition}
\end{align}
We then define the $\epsilon$-credible set as
\begin{align}
 B_\epsilon(\tilde{\mathbf{t}}_n) := \{u_1, u_2, \ldots, u_{K_\epsilon(\tilde{\mathbf{t}}_n)}\}.
 \label{eq:B_set_definition}
 \end{align}

If there are no ties in the conditional root probabilities~\eqref{eq:sorted_cond_root_probabilities}, then $B_\epsilon(\tilde{\mathbf{t}}_n)$ is the smallest subset of $\mathcal{U}_n$ such that
\begin{align}
\mathbb{P}( \tilde{\mathbf{T}}_1 \in B_\epsilon(\tilde{\mathbf{t}}_n) \,|\, \tilde{\mathbf{T}}_n = \tilde{\mathbf{t}}_n ) \geq 1 - \epsilon,
\label{eq:conditional_coverage}
\end{align}
When there are ties however, the second condition in our definition of $K_\epsilon$ dictates that we resolve ties by including all nodes with equal conditional root probabilities. Breaking ties by inclusion ensures that $B_\epsilon(\tilde{\mathbf{t}}_n)$ is labeling-equivariant. 

In general, credible sets do not have valid Frequentist confidence coverage. However, our next theorem shows that in our setting, the credible set $B_\epsilon$ is in fact an honest confidence set. 

\begin{theorem}
\label{thm:frequentist-coverage}
Let $\mathbf{T}_n$ be a random recursive tree and let $\mathbf{T}^*_n$ be any arbitrary labeled representation (with labels taking values in $\mathcal{U}_n$) of the observed shape $\text{sh}(\mathbf{T}_n)$, and let $\rho \in \text{Bi}([n], \mathcal{U}_n)$ be any isomorphism such that $\rho \mathbf{T}_n = \mathbf{T}^*_n$. We have that, for any $\epsilon \in (0, 1)$,
\[
\mathbb{P}\bigl\{ \text{root}_\rho(\mathbf{T}_n) \in B_\epsilon(\mathbf{T}^*_n) \bigr\} \geq 1 - \epsilon.
\]
\end{theorem}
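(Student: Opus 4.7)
The plan is to establish the Frequentist coverage by first obtaining conditional coverage (in the Bayesian sense) from the construction of $B_\epsilon$, integrating to get marginal coverage on the augmented probability space of label-randomized histories, and then transferring this back to the original probability space via label-equivariance.

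First I would verify the conditional coverage. By construction~\eqref{eq:size_K_definition}–\eqref{eq:B_set_definition}, for every realization $\tilde{\mathbf{t}}_n$ we have
\[
\mathbb{P}\bigl(\tilde{\mathbf{T}}_1 \in B_\epsilon(\tilde{\mathbf{t}}_n) \,\big|\, \tilde{\mathbf{T}}_n = \tilde{\mathbf{t}}_n\bigr) \geq 1 - \epsilon,
\]
since we keep including nodes in decreasing order of conditional root probability until the threshold $1-\epsilon$ is crossed (ties broken by inclusion only enlarges the set). Marginalizing over $\tilde{\mathbf{T}}_n$ immediately yields $\mathbb{P}(\tilde{\mathbf{T}}_1 \in B_\epsilon(\tilde{\mathbf{T}}_n)) \geq 1-\epsilon$.

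Next I would connect the randomized probability space to the original one. Given $\mathbf{T}^*_n = \rho \mathbf{T}_n$, draw an independent uniform bijection $\Lambda \in \text{Bi}(\mathcal{U}_n,\mathcal{U}_n)$ and set $\tilde{\mathbf{T}}_n := \Lambda \mathbf{T}^*_n = (\Lambda \circ \rho)\mathbf{T}_n$. Since $\Lambda \circ \rho$ is a uniform random bijection from $[n]$ to $\mathcal{U}_n$ independent of $\mathbf{T}_n$, the pair $(\mathbf{T}_n, \Lambda\circ\rho)$ has exactly the joint distribution used to define $\tilde{\mathbf{T}}_n$ in the label randomization construction, so $\tilde{\mathbf{T}}_1 = (\Lambda\circ\rho)(1) = \Lambda\bigl(\text{root}_\rho(\mathbf{T}_n)\bigr)$.

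The final step is to invoke label-equivariance of $B_\epsilon$ (Remark~\ref{rem:label-equivariance}, which is why ties must be broken by inclusion): $B_\epsilon(\Lambda \mathbf{T}^*_n) = \Lambda\bigl(B_\epsilon(\mathbf{T}^*_n)\bigr)$. Since $\Lambda$ is a bijection on $\mathcal{U}_n$,
\[
\tilde{\mathbf{T}}_1 \in B_\epsilon(\tilde{\mathbf{T}}_n) \iff \Lambda\bigl(\text{root}_\rho(\mathbf{T}_n)\bigr) \in \Lambda\bigl(B_\epsilon(\mathbf{T}^*_n)\bigr) \iff \text{root}_\rho(\mathbf{T}_n) \in B_\epsilon(\mathbf{T}^*_n),
\]
and therefore $\mathbb{P}\{\text{root}_\rho(\mathbf{T}_n) \in B_\epsilon(\mathbf{T}^*_n)\} = \mathbb{P}\{\tilde{\mathbf{T}}_1 \in B_\epsilon(\tilde{\mathbf{T}}_n)\} \geq 1-\epsilon$.

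The technical subtlety — and the only real obstacle — lies in carefully handling the automorphism structure. Multiple isomorphisms $\rho, \rho'$ can send $\mathbf{T}_n$ to $\mathbf{T}^*_n$ while mapping $1$ to distinct labels in $\mathcal{U}_n$, so the event $\{\text{root}_\rho(\mathbf{T}_n) \in B_\epsilon(\mathbf{T}^*_n)\}$ is a priori $\rho$-dependent. Label-equivariance of $B_\epsilon$ (combined with the fact that indistinguishable nodes are necessarily all in or all out of $B_\epsilon$) guarantees that the event, and hence its probability, is in fact independent of the particular $\rho$; this must be checked before the chain of equivalences above goes through without ambiguity.
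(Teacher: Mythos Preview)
Your proposal is correct and follows essentially the same approach as the paper: establish conditional (credible) coverage by construction, marginalize to the randomized space, and use labeling-equivariance of $B_\epsilon$ to identify the event $\{\tilde{\mathbf{T}}_1 \in B_\epsilon(\tilde{\mathbf{T}}_n)\}$ with $\{\text{root}_\rho(\mathbf{T}_n) \in B_\epsilon(\mathbf{T}^*_n)\}$. The only minor difference is that the paper proves labeling-equivariance of $B_\epsilon$ explicitly inside the proof (using $(\tilde{\mathbf{T}}_1,\ldots,\tilde{\mathbf{T}}_n) \stackrel{d}{=} (\tau\tilde{\mathbf{T}}_1,\ldots,\tau\tilde{\mathbf{T}}_n)$ to show the conditional root probabilities are relabeling-covariant), whereas you cite Remark~\ref{rem:label-equivariance}; since that remark only \emph{defines} equivariance and does not verify it for $B_\epsilon$, you should include the short argument yourself, but you clearly already have the right idea in noting that tie-breaking by inclusion is what makes it work.
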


\begin{proof}
We first claim that, for a given shape with an alphabetically labeled representation $\tilde{\mathbf{t}}_n$, the credible set $B_\epsilon(\tilde{\mathbf{t}}_n)$ is labeling-equivariant (cf. Remark~\ref{rem:label-equivariance}) in the sense that for any $\tau \in \text{Bi}(\mathcal{U}_n, \mathcal{U}_n)$, we have that $\tau B_{\epsilon}(\tilde{\mathbf{t}}_n) = B_\epsilon(\tau \tilde{\mathbf{t}}_n)$.

Indeed, since $(\tilde{\mathbf{T}}_1, \tilde{\mathbf{T}}_2, \ldots, \tilde{\mathbf{T}}_n) \stackrel{d}{=} (\tau \tilde{\mathbf{T}}_1, \tau \tilde{\mathbf{T}}_2, \ldots, \tau \tilde{\mathbf{T}}_n)$, we have that, for any $u \in \mathcal{U}_n$,
\[
\mathbb{P}(\tilde{\mathbf{T}}_1 = \{u\} \,|\, \tilde{\mathbf{T}}_n = \tilde{\mathbf{t}}_n) = 
\mathbb{P}(\tilde{\mathbf{T}}_1 = \{\tau(u)\} \,|\,
\tilde{\mathbf{T}}_n = \tau\tilde{\mathbf{t}}_n).
\]
Therefore, for any $u, v \in \mathcal{U}_n$, we have that $\mathbf{P}( \tilde{\mathbf{T}}_1 = \{u\} \,|\, \tilde{\mathbf{T}}_n = \tilde{\mathbf{t}}_n) \geq \mathbf{P}( \tilde{\mathbf{T}}_1 = \{v\} \,|\, \tilde{\mathbf{T}}_n = \tilde{\mathbf{t}}_n)$ if and only if 
$\mathbf{P}( \tilde{\mathbf{T}}_1 = \{\tau(u)\} \,|\, \tilde{\mathbf{T}}_n = \tau \tilde{\mathbf{t}}_n) \geq \mathbf{P}( \tilde{\mathbf{T}}_1 = \{\tau(v)\} \,|\, \tilde{\mathbf{T}}_n = \tau \tilde{\mathbf{t}}_n)$.
Since $B_\epsilon(\mathbf{T}^*_n)$ is constructed by taking the top elements of $\mathcal{U}_n$ that maximizes the cumulative conditional root probabilities, the claim follows. 

Now, let $\rho \in \text{Bi}([n], \mathcal{U}_n)$ be such that $\rho \mathbf{T}_n = \mathbf{T}^*_n$ and let $\Pi$ be a random bijection drawn uniformly in $\text{Bi}(\mathcal{U}_n, \mathcal{U}_n)$. Then,
\begin{align*}
\mathbb{P}( \text{root}_\rho(\mathbf{T}_n) \in B_\epsilon(\mathbf{T}^*_n)) &=
\mathbb{P}( \rho(1) \in B_\epsilon(\rho \mathbf{T}_n) ) \\
&= \mathbb{P}\bigl\{ (\Pi \circ \rho)(1) \in B_\epsilon( (\Pi \circ \rho) \mathbf{T}_n) \,|\, \Pi = \text{Id} \bigr\}\\
&= \mathbb{P}\bigl\{ (\Pi \circ \rho)(1) \in B_\epsilon( (\Pi \circ \rho) \mathbf{T}_n) \bigr\} \\
&= \mathbb{P}( \tilde{\mathbf{T}}_1 \in B_\epsilon(\tilde{\mathbf{T}}_n)) \geq 1 - \epsilon,
\end{align*}
where the penultimate equality follows from the labeling-equivariance of $B_\epsilon$ and where the last inequality follows because $\mathbf{P}( \tilde{\mathbf{T}}_1 \in B_\epsilon(\tilde{\mathbf{T}}_n) \,|\, \tilde{\mathbf{T}}_n = \tilde{\mathbf{t}}_n) \geq 1 - \epsilon$ for any labeled tree $\tilde{\mathbf{t}}_n$ (with labels in $\mathcal{U}_n$) by the definition of $B_{\epsilon}$.
\end{proof}

Theorem~\ref{thm:frequentist-coverage} shows that we may obtain a valid confidence set by constructing a credible set. The credible set can be efficiently computed for a class of tree growth processes that we describe in the next section. 

\subsection{Shape exchangeable growth process}

The conditional history distribution~\eqref{eq:conditional-history-prob} can be intractable for a general Markov tree growth process but it has an elegant characterization when the growth process satisfies a \emph{shape exchangeability} condition.

\begin{definition}
\label{defn:shape-exch}
A random recursive tree process  $\mathbf{T}=(\mathbf{T}_n)_{n\geq0}$ is {\em shape exchangeable} if for all $n\geq1$
\begin{equation}\label{eq:shape-exch}
\Pb(\Treen=\tree_n)=\Pb(\Treen=\tree_n')\quad\text{for all recursive trees }\tree_n,\tree_n' \in \mathcal{T}^R_n \text{ satisfying }\sh(\tree_n)=\sh(\tree_n').
\end{equation}
\end{definition}

\begin{remark}
Our definition of shape exchangeable trees follows recent developments in the theory of exchangeability, in particular the theory of relative exchangeability \citep{CraneTowsner2018}, in which the distribution is invariant with respect to a subgroup of permutations that respect an underlying structure.  In the case of shape exchangeability, the shape determines an isomorphism class of tree histories, and shape exchangeability implies that every member of a given isomorphism class is equiprobable.  Note, in particular, that the exchangeability condition applies to the distribution on recursive trees (i.e., tree histories), in the sense that two histories with the same shape have the same probability.  Shape exchangeability does not imply that each shape has the same probability of occurring.
\end{remark}

Immediate examples of shape exchangeable processes include the uniform attachment and the linear preferential attachment models from Examples \ref{example:uniform} and \ref{example:linear-PA}.  Theorem \ref{thm:pa-shape-exchangeable} shows that these two classes combine to characterize the class of all shape exchangeable processes.

In general, if a random recursive tree $\mathbf{T}_n$ is shape exchangeable, then the conditional probability~\eqref{eq:conditional-history-prob} of the random sequence of label-randomized trees $\tilde{\mathbf{T}}_1, \ldots, \tilde{\mathbf{T}}_n$ takes on a simple form as shown in Proposition~\ref{prop:uniform-conditional} below. We define some necessary concepts and notation before stating the result.

Let $\tilde{\mathbf{t}}_n$ be a labeled tree with nodes labeled by $\mathcal{U}_n$. We define $\text{hist}(\tilde{\mathbf{t}}_n)$ as set of all histories $\tilde{\mathbf{t}}_1 \subset \tilde{\mathbf{t}}_2 \subset \ldots \subset \tilde{\mathbf{t}}_{n-1} \subset \tilde{\mathbf{t}}_n$ that result in $\tilde{\mathbf{t}}_n$. We may associate each distinct history  $\tilde{\mathbf{t}}_1 \subset \tilde{\mathbf{t}}_2 \subset \ldots \subset \tilde{\mathbf{t}}_n$ with a sequence $v_1, v_2, \ldots, v_n \in \mathcal{U}_n$ such that $\tilde{\mathbf{t}}_k = \tilde{\mathbf{t}}_n \cap \{v_1, \ldots, v_k\}$ for every $k \in [n]$. We thus have that
\[
\text{hist}(\tilde{\mathbf{t}}_n) = \{ v_1, \ldots, v_n \in \mathcal{U}_n \,:\, \forall k \in [n], \, \tilde{\mathbf{t}}_n \cap \{v_1, \ldots, v_k\} \text{ is connected}
\},
\]
that is, $\text{hist}(\tilde{\mathbf{t}}_n)$ is a set of the permutations $v_1, v_2, \ldots, v_n$ of the node label set $\mathcal{U}_n$ that satisfy the constraint that $\tilde{\mathbf{t}}_n$ restricted to $v_1, \ldots, v_k$ is a connected sub-tree for any $k \in [n]$. Equivalently, we may define $\text{hist}(\tilde{\mathbf{t}}_n)$ as the set of all label bijections $\pi \in \text{Bi}([n], \mathcal{U}_n)$ such that $\pi^{-1} \tilde{\mathbf{t}}_n$ is a recursive tree. 

We write $\# \text{hist}(\tilde{\mathbf{t}}_n)$ the denote the number of distinct histories of $\tilde{\mathbf{t}}_n$. It is clear that $\# \text{hist}(\tilde{\mathbf{t}}_n)$ depends only on the shape $\text{sh}(\tilde{\mathbf{t}}_n)$. Moreover, for a particular node $v \in \mathcal{U}_n$, we define $\text{hist}(\tilde{\mathbf{t}}_n, v) $ as the set of all histories rooted at the node $v$, that is,
\[
\text{hist}(\tilde{\mathbf{t}}_n, v) = \{ v_1, \ldots, v_n \in \mathcal{U}_n \,:\, v_1 = v, \,\, \forall k \in [n], \, \tilde{\mathbf{t}}_n \cap \{v_1, \ldots, v_k\} \text{ is connected}
\}.
\]

\begin{proposition}
\label{prop:uniform-conditional}
Suppose $\mathbf{T}_n$ is shape exchangeable and let $\tilde{\mathbf{T}}_{1:n} = (\tilde{\mathbf{T}}_1, \ldots, \tilde{\mathbf{T}}_n)$ be the randomly labeled history. Then, for any history $\tilde{\mathbf{t}}_1 \subset \tilde{\mathbf{t}}_2 \subset \ldots \subset \tilde{\mathbf{t}}_n$ with labels in $\mathcal{U}_n$, we have that
\[
\mathbb{P}( \tilde{\mathbf{T}}_1 = \tilde{\mathbf{t}}_1, \ldots, \tilde{\mathbf{T}}_{n-1} = \tilde{\mathbf{t}}_{n-1} \,|\, \tilde{\mathbf{T}}_n = \tilde{\mathbf{t}}_n ) = \frac{1}{\# \text{hist}(\tilde{\mathbf{t}}_n) }
\]
\end{proposition}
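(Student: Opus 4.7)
The plan is to unpack the joint probability of any compatible history using equation~\eqref{eq:prob_single_history}, invoke shape exchangeability to show that all such histories have the same probability, and then normalize via Bayes' rule to obtain the uniform conditional distribution.

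First, I would use the bijective correspondence recalled just before equation~\eqref{eq:prob_single_history} to parametrize each history $\tilde{\mathbf{t}}_1 \subset \tilde{\mathbf{t}}_2 \subset \ldots \subset \tilde{\mathbf{t}}_n$ in $\text{hist}(\tilde{\mathbf{t}}_n)$ by a pair $(\mathbf{t}_n, \pi)$ with $\mathbf{t}_n \in \mathcal{T}^R_n$ a recursive tree and $\pi \in \text{Bi}([n], \mathcal{U}_n)$ satisfying $\pi|_{[k]} \mathbf{t}_k = \tilde{\mathbf{t}}_k$ for each $k$. Since $\mathbf{t}_n$ is recursive, the subtrees $\mathbf{t}_k = \mathbf{t}_n \cap [k]$ are determined by $\mathbf{t}_n$ alone, so combining equation~\eqref{eq:prob_single_history} with $\mathbb{P}(\Pi = \pi) = 1/n!$ gives
\[
\mathbb{P}(\tilde{\mathbf{T}}_1 = \tilde{\mathbf{t}}_1, \ldots, \tilde{\mathbf{T}}_n = \tilde{\mathbf{t}}_n) = \frac{1}{n!} \, \mathbb{P}(\mathbf{T}_n = \mathbf{t}_n).
\]

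Second, I would observe that for every such pair $(\mathbf{t}_n, \pi)$, the recursive tree $\mathbf{t}_n = \pi^{-1} \tilde{\mathbf{t}}_n$ is isomorphic to $\tilde{\mathbf{t}}_n$, hence $\sh(\mathbf{t}_n) = \sh(\tilde{\mathbf{t}}_n)$. Shape exchangeability (Definition~\ref{defn:shape-exch}) then implies that $\mathbb{P}(\mathbf{T}_n = \mathbf{t}_n)$ is a common constant $q$ across all recursive $\mathbf{t}_n$ of this shape. Consequently, every one of the $\# \text{hist}(\tilde{\mathbf{t}}_n)$ histories ending at $\tilde{\mathbf{t}}_n$ receives the same joint probability $q/n!$, and summing over them gives $\mathbb{P}(\tilde{\mathbf{T}}_n = \tilde{\mathbf{t}}_n) = q \cdot \# \text{hist}(\tilde{\mathbf{t}}_n)/n!$. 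Dividing yields the claimed conditional probability $1/\# \text{hist}(\tilde{\mathbf{t}}_n)$.

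There is no serious obstacle beyond bookkeeping; the one point to verify carefully is that the correspondence between histories in $\text{hist}(\tilde{\mathbf{t}}_n)$ and pairs $(\mathbf{t}_n, \pi)$ is genuinely bijective (distinct histories give distinct pairs, so the denominator really is $\# \text{hist}(\tilde{\mathbf{t}}_n)$ rather than a weighted sum). This follows from the construction because the singleton $\tilde{\mathbf{t}}_k \setminus \tilde{\mathbf{t}}_{k-1}$ identifies the value $\pi(k)$ uniquely, pinning down $\pi$ and hence $\mathbf{t}_n = \pi^{-1} \tilde{\mathbf{t}}_n$.
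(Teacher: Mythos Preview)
Your proposal is correct and follows essentially the same approach as the paper: both arguments use equation~\eqref{eq:prob_single_history} together with shape exchangeability to show that every history compatible with $\tilde{\mathbf{t}}_n$ has the same joint probability, and then normalize. The only cosmetic difference is that you compute the marginal $\mathbb{P}(\tilde{\mathbf{T}}_n = \tilde{\mathbf{t}}_n)$ explicitly by summing, whereas the paper simply observes that equal joint probabilities force equal conditional probabilities and hence uniformity over $\text{hist}(\tilde{\mathbf{t}}_n)$.
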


\begin{proof}
Suppose $\mathbf{T}_n$ is shape exchangeable. Let  $\tilde{\mathbf{t}}_n$ be any labeled tree with nodes labels in $\mathcal{U}_n$ let  $\mathbf{t}^{\bullet}_1\subset\cdots\subset\mathbf{t}^{\bullet}_{n-1} \subset \tilde{\mathbf{t}}_n$ and $\mathbf{t}^{\circ}_1\subset\cdots\subset \mathbf{t}^\circ_{n-1} \subset \tilde{\mathbf{t}}_n$ 
%$\tilde{\mathbf{t}}_1 \subset \ldots  \tilde{\mathbf{t}}_{n-1}$ and $\check{\mathbf{t}}_1 \subset \ldots  \check{\mathbf{t}}_{n-1}$ 
be two histories of $\tilde{\mathbf{t}}_n$. By~\eqref{eq:prob_single_history} and shape exchangeability of $\mathbf{T}_n$, we have that
\[
\mathbb{P}(\tilde{\mathbf{T}}_1 = {\mathbf{t}}^{\bullet}_1, \ldots, \tilde{\mathbf{T}}_{n-1} = {\mathbf{t}}^{\bullet}_{n-1}, \tilde{\mathbf{T}}_n = \tilde{\mathbf{t}}_n) = 
\mathbb{P}(\tilde{\mathbf{T}}_1 = {\mathbf{t}}^{\circ}_1, \ldots, \tilde{\mathbf{T}}_{n-1} = {\mathbf{t}}^{\circ}_{n-1}, \tilde{\mathbf{T}}_n = \tilde{\mathbf{t}}_n) .
\]

Therefore,
\begin{align*}
&\mathbb{P}( \tilde{\mathbf{T}}_1 = {\mathbf{t}}^{\bullet}_1, \ldots, \tilde{\mathbf{T}}_{n-1} = {\mathbf{t}}^{\bullet}_{n-1} \,|\, \tilde{\mathbf{T}}_n = \tilde{\mathbf{t}}_n) \\
&= \mathbb{P}(\tilde{\mathbf{T}}_1 = {\mathbf{t}}^{\bullet}_1, \ldots, \tilde{\mathbf{T}}_{n-1} = {\mathbf{t}}^{\bullet}_{n-1}, \tilde{\mathbf{T}}_n = \tilde{\mathbf{t}}_n)\frac{1}{\mathbb{P}(\tilde{\mathbf{T}}_n=\tilde{\mathbf{t}}_n)}\\
&= \mathbb{P}(\tilde{\mathbf{T}}_1 = {\mathbf{t}}^{\circ}_1, \ldots, \tilde{\mathbf{T}}_{n-1} = {\mathbf{t}}^{\circ}_{n-1}, \tilde{\mathbf{T}}_n = \tilde{\mathbf{t}}_n)\frac{1}{\mathbb{P}(\tilde{\mathbf{T}}_n=\tilde{\mathbf{t}}_n)}\\
&=\mathbb{P}( \tilde{\mathbf{T}}_1 = {\mathbf{t}}^{\circ}_1, \ldots, \tilde{\mathbf{T}}_{n-1} = {\mathbf{t}}^{\circ}_{n-1} \,|\, \tilde{\mathbf{T}}_n = \tilde{\mathbf{t}}_n),
\end{align*}
for all histories $(\mathbf{t}^{\bullet}_i)_{1\leq i \leq n}$ and $(\mathbf{t}^{\circ}_i)_{1\leq i \leq n}$ corresponding to $\tilde{\mathbf{t}}_n$. It follows that the conditional probability is equal for all histories, and thus the conditional distribution of the history given the final state $\tilde{\mathbf{t}}_n$ is uniform over $\text{hist}(\tilde{\mathbf{t}}_n)$, as was to be proven.
\end{proof}

%It is not obvious a priori that shape exchangeable growth processes exist but the next theorem shows that shape exchangeability holds for a large class of preferential attachment models.

\begin{theorem}
\label{thm:pa-shape-exchangeable}
Let $\phi \,:\, \mathbb{N} \rightarrow [0,\infty)$ and let $\mathbf{T}=(\mathbf{T}_n)_{n\geq1}$ be a $\text{PA}_\phi$ process. $\mathbf{T}$ is shape exchangeable if and only if $\phi$ has the form
\begin{equation}\label{eq:pa-shape-exchangeable-phi}
\phi(d) = \max(\alpha+\beta d,0),\quad d\geq1,
\end{equation}
for $\alpha,\beta$ satisfying either
\begin{itemize}
    \item $\beta<0$ and $\alpha=-D\beta$ for some integer $D\geq2$ or
    \item $\beta\geq0$ and $\alpha>-\beta$.
\end{itemize}
%for $\alpha \geq 0$ and $\beta \in \mathbb{R}$ such that $\alpha + \beta > 0$ and either $\beta \geq 0$ or, if $\beta < 0$, then there exists an integer $D \geq 2$ such that $\alpha + D \beta = 0$. 
\end{theorem}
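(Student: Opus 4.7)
The plan is to derive an explicit formula for the probability $\mathbb{P}(\mathbf{T}_n = \mathbf{t}_n)$ of generating a given recursive tree under the $\text{PA}_\phi$ process and then use shape exchangeability to constrain $\phi$. Multiplying transition probabilities and tracking, for each vertex $v \in V(\mathbf{t}_n)$, the sequence of degrees $v$ attains as its children arrive, I will show that the numerator factors as
\[
\phi(0) \prod_{v \in V(\mathbf{t}_n)} \prod_{i=1}^{\deg(v,\mathbf{t}_n)-1} \phi(i),
\]
which depends only on the multiset of degrees of $\mathbf{t}_n$ and is therefore already shape invariant. The denominator is $\prod_{k=2}^{n} Z(\mathbf{t}_{k-1})$ where $Z(\mathbf{t}) := \sum_{w \in V(\mathbf{t})} \phi(\deg(w,\mathbf{t}))$, so shape exchangeability reduces to deciding when this denominator also depends only on $\text{sh}(\mathbf{t}_n)$.

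For the ``if'' direction, I check that when $\phi(d) = \max(\alpha+\beta d, 0)$ with the stated parameter conditions, every tree produced by the process only has degrees in the regime where $\alpha+\beta d \geq 0$: in the $\beta<0$ case no vertex ever exceeds degree $D$ because $\phi(D)=0$, and in the $\beta \geq 0$ case positivity is automatic for $d \geq 1$. On this effective support the handshake lemma gives
\[
Z(\mathbf{t}_{k-1}) = (k-1)\alpha + 2(k-2)\beta,
\]
a function of $k$ alone. Combined with the shape-invariant numerator, $\mathbb{P}(\mathbf{T}_n = \mathbf{t}_n)$ depends only on $\text{sh}(\mathbf{t}_n)$.

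For the ``only if'' direction, assume the process is shape exchangeable. To extract a recurrence on $\phi$, for each $d \geq 2$ with $\phi(d)>0$ I consider the $(d+3)$-node shape consisting of a central vertex $u$ with $d$ pendant leaves $l_1,\ldots,l_d$ and one additional neighbor $v$ that itself carries a pendant leaf $w$. I compare two recursive labelings of this shape: labeling $A$ adds the nodes in the order $u, l_1, \ldots, l_d, v, w$, whereas labeling $B$ adds them in the order $u, l_1, \ldots, l_{d-1}, v, w, l_d$. A step-by-step check shows the two labelings produce identical intermediate subtrees at every step except $k=d+2$, where $\mathbf{t}_{d+2}$ is a star with degree sequence $(d+1,1^{d+1})$ under $A$ but a ``broom'' with degree sequence $(d,2,1^d)$ under $B$. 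Equating the corresponding $Z$ values yields
\[
\phi(d+1) - \phi(d) = \phi(2) - \phi(1),
\]
forcing $\phi$ to be affine on its support. Writing $\phi(d) = \alpha+\beta d$, I conclude by analyzing boundary behavior: if $\beta<0$ then $\phi$ first vanishes at $D := -\alpha/\beta$, which must be an integer at least $2$ (integrality because $\phi$ is defined on $\mathbb{N}$, and $D \geq 2$ because $D=1$ would collapse the process via $\phi(1)=0$); if $\beta \geq 0$ then the support is all of $\{1,2,\ldots\}$ and positivity at $d=1$ requires $\alpha > -\beta$.

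The main obstacle is the ``only if'' direction, specifically the task of choosing, for each degree $d$, a shape together with two recursive labelings whose intermediate subtree sequences coincide at every step except one, so that a single $Z$-equality survives and isolates the recurrence at that degree. A secondary subtlety is translating the affine relation on the support into the precise case dichotomy of the theorem, which requires the integrality-and-$\geq 2$ conditions on $D$ in the truncated case and the strict inequality $\alpha > -\beta$ in the untruncated case.
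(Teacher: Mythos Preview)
Your proof is correct and follows essentially the same approach as the paper. The ``if'' direction is identical (the handshake lemma makes the normalizing constant $Z(\mathbf{t}_{k-1})=(k-1)\alpha+2(k-2)\beta$ depend only on $k$, while the numerator depends only on the degree multiset); for the ``only if'' direction, the paper swaps the attachment order of the last two leaves at an \emph{arbitrary} pair of vertices of degrees $k$ and $k'$ to obtain $\phi(k)-\phi(k-1)=\phi(k')-\phi(k'-1)$, and your broom/star construction is precisely the special case $k=d+1$, $k'=2$ of that same swap, yielding the equivalent recurrence $\phi(d+1)-\phi(d)=\phi(2)-\phi(1)$.
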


We defer the proof of Theorem~\ref{thm:pa-shape-exchangeable} to
Section~\ref{sec:proof-pa-shape} of the appendix.

\begin{remark}
Theorem \ref{thm:pa-shape-exchangeable} shows that shape exchangeability encompasses three widely studied tree growth processes.  Both the linear preferential attachment process, where $\phi(d) = d$, and the uniform attachment, where $\phi(d) = 1$, are shape exchangeable.  In addition, we note that the case where $\beta$ is negative corresponds to uniform attachment on the $D$-regular tree, i.e., $\phi(d)=D-d$ for some $D\geq2$.  However, the sublinear preferential attachment process where $\phi(d) = d^\gamma$ for some $\gamma \in (0, 1)$ is not shape exchangeable; we discuss inference procedures for non-shape exchangeable trees in Section~\ref{sec:importance_sampling}. 

%And moreover the class of all shape exchangeable preferential attachment models corresponds to processes for which $\phi$ is a linear combination of the two. 

Thus, for uniform attachment, linear preferential attachment, and $D$-regular uniform attachment processes, Proposition \ref{prop:uniform-conditional} and Theorem \ref{thm:pa-shape-exchangeable} combine to imply that inference about measurable functions of the unobserved history can be performed without knowing the $\alpha$ and $\beta$ parameters governing the process. In particular, valid confidence sets can be constructed by observing only the shape of the final state. 

Equivalently, if $\mathbf{T}_n$ has the $\text{PA}_\phi$ distribution where $\phi(d) = \max(0, \alpha + \beta d)$, then, by Proposition~\ref{prop:uniform-conditional}, the shape $\text{sh}(\mathbf{T}_n)$ is a sufficient statistic for $\alpha$ and $\beta$ and knowledge of the history is ancillary to the estimation of $\alpha$ and $\beta$. We note that \cite{gao2017consistent} makes a similar informal statement regarding the estimation of the parameter function $\phi$. 
\end{remark}

\subsection{Computing conditional root probabilities}

Shape exchangeable tree processes are naturally suited to the inference questions highlighted in Section~\ref{sec:inference-problem}.  For example, for the question of root inference, suppose we observe $\sh(\Treen)=\shape$ with an arbitrary labeled representation $\tilde{\mathbf{t}}_n$, then we may compute, for each node $v \in \mathcal{U}_n$ of $\tilde{\mathbf{t}}_n$,
\begin{align}
\mathbb{P}( \tilde{\mathbf{T}}_1 = \{v\} \,|\, \tilde{\mathbf{T}}_n = \tilde{\mathbf{t}}_n ) =
\frac{\#\text{hist}(\tilde{\mathbf{t}}_n, v)}{\# \text{hist}(\tilde{\mathbf{t}}_n)}. \label{eq:conditional-root-probability}
\end{align}

In fact, the numerator $\# \text{hist}(\tilde{\mathbf{t}}_n, u)$ coincides with the notion of \emph{rumor centrality} defined by \citet{shah2011rumors} for the purpose of estimating the root node. The following proposition summarizes a characterization of $\#\text{hist}(\tilde{\mathbf{t}}_n,u)$ given in Section IIIA of \cite{shah2011rumors}. We note that \cite{knuth1997art} made the same observation in the context of counting the number of ways to linearize a partial ordering. 

\begin{proposition}
\citep{knuth1997art, shah2011rumors} \\
Let $\tilde{\mathbf{t}}_n$ be a labeled tree with $V(\tilde{\mathbf{t}}_n) = \mathcal{U}_n$ and let $u \in \mathcal{U}_n$. Viewing $\tilde{\mathbf{t}}_n$ as being rooted at $u$, define, for every node $v \in \mathcal{U}_n$, the tree $\tilde{\mathbf{t}}^{(u)}_v$ as the sub-tree of $\tilde{\mathbf{t}}_n$ rooted at $v$ and $n^{(u)}_v$ as the size of $\tilde{\mathbf{t}}^{(u)}_v$. Writing $v_1, v_2, \ldots, v_L$ as the neighbors of $u$, we have that
\begin{align*}
\#\text{hist}(\tilde{\mathbf{t}}_n, u) &= \biggl( \frac{(n-1)!}{n^{(u)}_{v_1}! n^{(u)}_{v_2}! \ldots n^{(u)}_{v_L}!} \# \text{hist}(\tilde{\mathbf{t}}^{(u)}_{v_1}, v_1)\# \text{hist}(\tilde{\mathbf{t}}^{(u)}_{v_2}, v_2) \ldots \# \text{hist}(\tilde{\mathbf{t}}^{(u)}_{v_L}, v_L) \biggr) \\
&= (n-1)! \prod_{v \in \mathcal{U}_n - \{u\}} \frac{1}{ n^{(u)}_v }.
\end{align*}
\label{prop:count-rooted-history}
\end{proposition}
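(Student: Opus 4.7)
The plan is to prove both equalities by induction on $n$, since the statement has a natural recursive structure: viewing $\tilde{\mathbf{t}}_n$ as rooted at $u$, any history starting at $u$ interleaves independent sub-histories of the principal subtrees hanging off $u$. First I would observe that a history $v_1, v_2, \ldots, v_n \in \text{hist}(\tilde{\mathbf{t}}_n, u)$ is in bijection with a linear extension of the partial order on $\mathcal{U}_n$ in which each non-root node is covered by its parent in the tree rooted at $u$: the condition that $\tilde{\mathbf{t}}_n \cap \{v_1, \ldots, v_k\}$ is connected is equivalent to saying that each node appears after its parent.

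For the first equality, in the inductive step I would fix $v_1 = u$ and note that a valid history is determined by (a) a choice of which positions among $\{2, \ldots, n\}$ are occupied by nodes from each subtree $\tilde{\mathbf{t}}^{(u)}_{v_i}$, and (b) the order in which the nodes of each $\tilde{\mathbf{t}}^{(u)}_{v_i}$ appear (which itself must be a valid history rooted at $v_i$, by the connectivity requirement). Part (a) contributes the multinomial coefficient $\frac{(n-1)!}{n^{(u)}_{v_1}! \cdots n^{(u)}_{v_L}!}$ and part (b) contributes $\prod_{i=1}^L \#\text{hist}(\tilde{\mathbf{t}}^{(u)}_{v_i}, v_i)$, giving the first displayed identity.

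For the second equality, I would apply the inductive hypothesis
\[
\#\text{hist}(\tilde{\mathbf{t}}^{(u)}_{v_i}, v_i) = (n^{(u)}_{v_i}-1)! \prod_{w \in V(\tilde{\mathbf{t}}^{(u)}_{v_i}) \setminus \{v_i\}} \frac{1}{n^{(u)}_w},
\]
where I use that the subtree sizes $n^{(u)}_w$ for $w$ in $\tilde{\mathbf{t}}^{(u)}_{v_i}$ agree whether computed in the full tree rooted at $u$ or in the subtree rooted at $v_i$. Substituting into the first equality, the factorials $(n^{(u)}_{v_i}-1)!$ cancel $n^{(u)}_{v_i}!$ down to a single factor $n^{(u)}_{v_i}$ in the denominator, which combines with the product over the non-root descendants to yield $\prod_{v \in V(\tilde{\mathbf{t}}_n) \setminus \{u\}} 1/n^{(u)}_v$. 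A telescoping/bookkeeping check verifies the disjoint union $\bigsqcup_i \bigl( V(\tilde{\mathbf{t}}^{(u)}_{v_i}) \setminus \{v_i\}\bigr) \sqcup \{v_1, \ldots, v_L\} = \mathcal{U}_n \setminus \{u\}$, completing the induction.

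I do not expect any serious obstacle; the work is essentially a clean multinomial bookkeeping argument. The only subtle point to verify carefully is the invariance of the subtree sizes $n^{(u)}_w$ when restricting attention to a principal subtree rooted at $v_i$, and the disjoint-union identity used in the telescoping step — both are straightforward but must be stated to make the induction airtight.
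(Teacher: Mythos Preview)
Your proposal is correct and follows the standard argument for counting linear extensions of a rooted forest (the multinomial interleaving to obtain the recursive first identity, then induction to unwind into the product formula). Note, however, that the paper does not actually give a proof of this proposition: it is stated with citation to \cite{knuth1997art} and \cite{shah2011rumors} and attributed to Section~IIIA of the latter, so there is no ``paper's own proof'' to compare against. Your write-up is exactly the classical derivation those references contain, so nothing needs to change.
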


Using the fact that $\#\text{hist}(\tilde{\mathbf{t}}_n, v) = \#\text{hist}(\tilde{\mathbf{t}}_n, \text{pa}(v)) \frac{n^{(u)}_{v}}{n - n^{(u)}_{v}}$ for any node $v$ and its parent node $\text{pa}(v)$, viewing $\tilde{\mathbf{t}}_n$ as being rooted at $u$, \citet{shah2011rumors} derive an $O(n)$ algorithm  for counting the number of histories for all possible roots $\{\#\text{hist}(\tilde{\mathbf{t}}_n, u)\}_{u \in \mathcal{U}_n}$. We give the details in Algorithm~\ref{alg:count_history} for reader's convenience. Using Algorithm~\ref{alg:count_history}, we conclude that the overall runtime of computing the confidence set $B_\epsilon(\cdot)$ is $O(n \log n)$ since we need to also peform a sort. 

\begin{algorithm}
\caption{Computing $\{ \#\text{hist}(\tilde{\mathbf{t}}_n, v)\}_{v \in \mathcal{U}_n}$ \citep{shah2011rumors}}
\label{alg:count_history}
\textbf{Input:} a labeled tree $\tilde{\mathbf{t}}_n$. \\
\textbf{Output:} $\# \text{hist}(\tilde{\mathbf{t}}_n, v)$ for all nodes $v \in \mathcal{U}_n$.
\begin{algorithmic}
\State Arbitrarily select root $u \in \mathcal{U}_n$.
\For{$v \in \mathcal{U}_n$}
 \State Compute and store $n^{(u)}_v := | \tilde{\mathbf{t}}^{(u)}_v |$.
 \EndFor
\State Compute $\#\text{hist}(\tilde{\mathbf{t}}_n, u)$ by Proposition~\ref{prop:count-rooted-history} and set $\mathcal{S} = \{ \text{Children}(u) \}$.
\While{$\mathcal{S}$ is not empty}
 \State Remove $v \in \mathcal{S}$
 \State Compute $\#\text{hist}(\tilde{\mathbf{t}}_n, v) = \#\text{hist}(\tilde{\mathbf{t}}_n, \text{pa}(v)) \frac{ n_v^{(u)}}{n- n_v^{(u)}}$
 \State Add $\text{Children}(v)$ to $\mathcal{S}$
 \EndWhile
\end{algorithmic}
\end{algorithm}

\subsection{Size of the confidence set}

In this section, we use the results of \cite{bubeck2017finding} and \cite{khim2017confidence} to provide a theoretical analysis of the size of the confidence set $B_\epsilon(\cdot)$. We also provide empirical studies of the size in our simulation studies in Section~\ref{sec:simulation}. 

In Section~\ref{sec:root-inference}, we defined $K_\epsilon(\tilde{\mathbf{t}}_n)$ for a fixed labeled tree $\tilde{\mathbf{t}}_n$ in~\eqref{eq:size_K_definition} which is the size of our confidence set $B_\epsilon(\tilde{\mathbf{t}}_n)$. In this section, we analyze a slight variation where, assuming that the nodes $u_1, u_2, \ldots, u_n$ are sorted in decreasing order by their conditional root probabilities, we define
\begin{align}
K_\epsilon(\tilde{\mathbf{t}}_n) := \min \biggl\{ k \in [n] \,:\,  \sum_{i=1}^k \mathbb{P}(\tilde{\mathbf{T}}_1 = \{u_i\} \,|\, \tilde{\mathbf{T}}_n = \tilde{\mathbf{t}}_n ) \geq 1 - \epsilon 
\nonumber \\
\,:\, \{u_1, \ldots, u_k\} = \bigcup_{i = 1}^k \text{Eq}(u_i, \tilde{\mathbf{t}}_n) \biggr\} \label{eq:size-K-definition2}
\end{align}
where $\text{Eq}(u, \tilde{\mathbf{t}}_n) \subset \mathcal{U}_n$ is defined formally in~\eqref{eq:equivalent-nodes} and is intuitively the set of nodes equivalent to $u$ in the tree $\tilde{\mathbf{t}}_n$. If we construct $B_\epsilon(\tilde{\mathbf{t}}_n) := \{ u_1, \ldots, u_{K_\epsilon(\tilde{\mathbf{t}}_n)}\}$, then, for any fixed labeled tree $\tilde{\mathbf{t}}_n$, the set $B_\epsilon(\tilde{\mathbf{t}}_n)$ is the smallest labeling-equivariant subset of $\mathcal{U}_n$ such that $\mathbb{P}(\tilde{\mathbf{T}}_1 \in B_\epsilon(\tilde{\mathbf{t}}_n) \,|\, \tilde{\mathbf{T}}_n = \tilde{\mathbf{t}}_n) \geq 1 - \epsilon$. In other words, $B_\epsilon(\cdot)$ is the optimal credible set for any fixed Bayesian coverage level. We may directly apply the argument in the proof of Theorem~\ref{thm:frequentist-coverage} to show that $B_\epsilon(\cdot)$, defined with respect to~\eqref{eq:size-K-definition2} is also a valid Frequentist confidence set at the same level $1-\epsilon$. We note that, for practical applications, we prefer~\eqref{eq:size_K_definition} over~\eqref{eq:size-K-definition2}; we observe in simulations that the two are almost always equivalent. The next lemma compares the size of $B_\epsilon(\cdot)$ with the optimal Frequentist confidence set.

\begin{lemma}
\label{lem:comparison}
Let $\epsilon \in (0, 1)$ be arbitrary, let $\mathbf{T}_n$ be a random recursive tree, and let $\mathbf{T}^*_n$ be an alphabetically labeled representation of $\text{sh}(\mathbf{T}_n)$. Let $K_\epsilon(\mathbf{T}^*_n)$ be defined as in~\eqref{eq:size-K-definition2}. Fix any $\delta \in (0,1)$ and let $C_{\delta \epsilon}(\mathbf{T}_n^*)$ be any confidence set for the root node that is labeling-equivariant and has asymptotic coverage level $\delta \epsilon$, that is, $\limsup_{n \rightarrow \infty} \mathbb{P}( \text{root}_{\rho}(\mathbf{T}_n) \notin C_{\delta \epsilon}( \rho \mathbf{T}_n)) \leq \delta \epsilon$ for any labeling $\rho \in \text{Bi}([n], \mathcal{U}_n)$ that satisfies $\rho \mathbf{T}_n = \mathbf{T}_n^*$. Then, we have that
\[
\limsup_{n \rightarrow \infty} \mathbb{P}\bigl( K_{\epsilon}(\mathbf{T}_n^*) \geq \# C_{\delta \epsilon}(\mathbf{T}_n^*) \bigr) \leq \delta.
\]
\end{lemma}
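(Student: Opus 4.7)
The plan rests on two observations: $B_\epsilon(\tilde{\mathbf{t}}_n)$ is, by definition of $K_\epsilon$ in~\eqref{eq:size-K-definition2}, the \emph{smallest} labeling-equivariant subset of $\mathcal{U}_n$ whose cumulative conditional root probability is at least $1 - \epsilon$; and a Markov-style inequality can upgrade the asymptotic Frequentist coverage of $C_{\delta\epsilon}$ into a high-probability conditional coverage statement. Combining these should yield the bound.

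First, I would replay the permutation argument from the proof of Theorem~\ref{thm:frequentist-coverage} to reduce the hypothesis to a statement about the randomly labeled process: if $\Pi$ is uniform on $\text{Bi}(\mathcal{U}_n,\mathcal{U}_n)$ independent of everything else, then the labeling-equivariance of $C_{\delta\epsilon}$ together with $\text{root}_\rho(\mathbf{T}_n) = \rho(1)$ gives
\[
\mathbb{P}(\text{root}_\rho(\mathbf{T}_n) \notin C_{\delta\epsilon}(\mathbf{T}^*_n)) = \mathbb{P}(\tilde{\mathbf{T}}_1 \notin C_{\delta\epsilon}(\tilde{\mathbf{T}}_n)),
\]
so the hypothesis becomes $\limsup_n \mathbb{P}(\tilde{\mathbf{T}}_1 \notin C_{\delta\epsilon}(\tilde{\mathbf{T}}_n)) \leq \delta\epsilon$. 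Writing this marginal miscoverage as an expectation of the conditional miscoverage $M_n := \mathbb{P}(\tilde{\mathbf{T}}_1 \notin C_{\delta\epsilon}(\tilde{\mathbf{T}}_n) \mid \tilde{\mathbf{T}}_n)$ and applying Markov's inequality at threshold $\epsilon$ yields
\[
\mathbb{P}(M_n \geq \epsilon) \leq \frac{\mathbb{E}[M_n]}{\epsilon} \leq \delta + o(1).
\]
Hence, asymptotically with probability at least $1-\delta$, the set $C_{\delta\epsilon}(\tilde{\mathbf{T}}_n)$ attains conditional root coverage strictly greater than $1-\epsilon$.

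On this good event, $C_{\delta\epsilon}(\tilde{\mathbf{T}}_n)$ is a labeling-equivariant subset of $\mathcal{U}_n$ satisfying the defining conditional-coverage property of $B_\epsilon(\tilde{\mathbf{T}}_n)$ from~\eqref{eq:size-K-definition2}, so minimality of $B_\epsilon$ forces $K_\epsilon(\tilde{\mathbf{T}}_n) \leq \#C_{\delta\epsilon}(\tilde{\mathbf{T}}_n)$. Thus the event $\{K_\epsilon(\tilde{\mathbf{T}}_n) \geq \#C_{\delta\epsilon}(\tilde{\mathbf{T}}_n)\}$ is contained in the Markov-exceedance event $\{M_n \geq \epsilon\}$, and converting back to $\mathbf{T}^*_n$ by the same labeling-equivariance argument, taking $\limsup$, gives the desired bound.

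The main obstacle is the careful propagation of the $\limsup$ from the marginal Frequentist hypothesis through the conditional Markov step while keeping the labeling-randomization trick valid; this requires reusing the symmetry argument of Theorem~\ref{thm:frequentist-coverage} almost verbatim. A subsidiary subtlety concerns the distinction between $K_\epsilon > \#C_{\delta\epsilon}$ (which the Markov containment controls cleanly) and the strict-versus-weak inequality in the stated event $\{K_\epsilon \geq \#C_{\delta\epsilon}\}$: on $\{M_n < \epsilon\}$ one only obtains $K_\epsilon \leq \#C_{\delta\epsilon}$ with possible equality, so one either invokes Markov with a slightly shrunk threshold (absorbing the loss into the $o(1)$) or simply reads the statement as the natural $>$-version with equality handled by the same bound.
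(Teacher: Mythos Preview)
Your proposal is correct and follows essentially the same route as the paper's proof: the paper also first uses labeling-equivariance to pass to the randomly labeled tree $\tilde{\mathbf{T}}_n$, then uses minimality of $B_\epsilon$ to deduce that $K_\epsilon(\tilde{\mathbf{t}}_n) > \#C_{\delta\epsilon}(\tilde{\mathbf{t}}_n)$ forces the conditional coverage of $C_{\delta\epsilon}$ to fall below $1-\epsilon$, and finishes via the decomposition $1-\delta\epsilon+o(1)\le \mathbb{P}(K_\epsilon\le\#C_{\delta\epsilon})+(1-\epsilon)\mathbb{P}(K_\epsilon>\#C_{\delta\epsilon})$, which is exactly your Markov step written out by hand. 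You are also right that both the paper's argument and yours actually control $\{K_\epsilon>\#C_{\delta\epsilon}\}$ rather than the stated $\{K_\epsilon\ge\#C_{\delta\epsilon}\}$; the paper simply leaves this discrepancy unremarked.
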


By letting $\delta = 1/2$ for example, Lemma~\ref{lem:comparison} shows that, with probability at least $1/2$, the size of our confidence set $B_\epsilon(\cdot)$ is no larger than the optimal asymptotically valid confidence set at a higher level $1 - \epsilon/2$. We defer the proof of Lemma~\ref{lem:comparison} to Section~\ref{sec:size-proof} in the appendix.

We now define $K_{
\text{ua}}(\epsilon) := a \exp\biggl( \frac{b \log(1/\epsilon)}{\log \log (1/\epsilon)} \biggr)$, $K_{\text{lpa}}(\epsilon) := C \frac{\log^2 (1/\epsilon)}{\epsilon^4}$, and, for an integer $D \geq 3$, 
$K_{\text{reg}, D}(\epsilon) := C_D/\epsilon$ where $a, b, C >0$ are universal constants and where $C_D > 0$ is a constant that depend only on $D$. We may then use Lemma~\ref{lem:comparison} in conjunction with results from \cite{bubeck2017finding} and \cite{khim2017confidence} to bound the size of our confidence sets. 

\begin{corollary}
Let $\epsilon \in (0, 1)$ be arbitrary, let $\mathbf{T}_n$ be a random recursive tree, and let $\mathbf{T}^*_n$ be an arbitrary labeled representation of $\text{sh}(\mathbf{T}_n)$ whose node labels take values in $\mathcal{U}_n$. Let $K_\epsilon(\mathbf{T}^*_n)$ be defined as in~\eqref{eq:size-K-definition2}.

If $\mathbf{T}_n$ is distributed according to the uniform attachment model, we have that, for all $\delta \in (0, 1)$,
\[
\limsup_{n \rightarrow \infty} \mathbb{P}( K_{\epsilon}(\mathbf{T}^*_n) \geq K_{\text{ua}}(\delta \cdot \epsilon)) \leq \delta.
\]
If $\mathbf{T}_n$ is distributed according to linear preferential attachment,
\[
\limsup_{n \rightarrow \infty} \mathbb{P}( K_{\epsilon}(\mathbf{T}^*_n) \geq K_{\text{pa}}(\delta \cdot \epsilon) ) \leq \delta,
\]
If, for some integer $D \geq 3$, $\mathbf{T}_n$ is distributed according to uniform attachment on a $D$-regular tree, then
\[
\limsup_{n \rightarrow \infty} \mathbb{P}( K_{\epsilon}(\mathbf{T}^*_n) \geq K_{\text{reg}, D}(\delta \cdot \epsilon) ) \leq \delta.
\]
\label{cor:confidence_set_size}
\end{corollary}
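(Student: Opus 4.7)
The proof is essentially a direct application of Lemma~\ref{lem:comparison} to the three known results that were reviewed in Section~\ref{sec:prev-work}. The plan is to treat the three cases in parallel: in each case, instantiate Lemma~\ref{lem:comparison} with the confidence set $C_{\delta\epsilon,\psi}$ built from the centrality score $\psi$ described by \cite{bubeck2017finding} (for uniform attachment and linear preferential attachment) or by \cite{khim2017confidence} (for $D$-regular uniform attachment), whose sizes are exactly $K_{\text{ua}}(\delta\epsilon)$, $K_{\text{lpa}}(\delta\epsilon)$, and $K_{\text{reg},D}(\delta\epsilon)$ respectively.

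First I would verify the hypotheses of Lemma~\ref{lem:comparison}. Each of the three confidence sets is obtained by ranking nodes with a labeling-equivariant scoring function $\psi$ and taking the top $K(\delta\epsilon)$; this is explicitly noted in Section~\ref{sec:prev-work} and produces a labeling-equivariant confidence set (up to the tie-breaking rule, which can be taken to include whole equivalence classes of indistinguishable nodes, as discussed in Remark~\ref{rem:label-equivariance}). The asymptotic coverage bound $\limsup_{n\to\infty}\mathbb{P}(\text{root}_\rho(\mathbf{T}_n)\notin C_{\delta\epsilon}(\mathbf{T}_n^*)) \le \delta\epsilon$ is precisely the content of \citet[][Theorem~5]{bubeck2017finding} in the UA case, \citet[][Theorem~6]{bubeck2017finding} in the LPA case, and \citet[][Corollary~1]{khim2017confidence} in the $D$-regular UA case, with the corresponding choices of size function evaluated at $\delta\epsilon$ rather than $\epsilon$.

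Next I would plug these confidence sets into Lemma~\ref{lem:comparison} with the target level $\delta\epsilon$. The lemma then yields
\[
\limsup_{n \to \infty} \mathbb{P}\bigl(K_\epsilon(\mathbf{T}_n^*) \ge \# C_{\delta\epsilon}(\mathbf{T}_n^*)\bigr) \le \delta.
\]
Since $\# C_{\delta\epsilon}(\mathbf{T}_n^*)$ equals $K_{\text{ua}}(\delta\epsilon)$, $K_{\text{lpa}}(\delta\epsilon)$, or $K_{\text{reg},D}(\delta\epsilon)$ depending on the case, each of the three displayed inequalities in the corollary follows immediately.

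The only step that requires care is checking that the Bubeck--Devroye--Lugosi and Khim--Loh constructions indeed produce labeling-equivariant sets to which Lemma~\ref{lem:comparison} applies. The scoring functions $\psi$ in these papers are defined intrinsically in terms of the (unlabeled) structure around each vertex, so $\psi(u,\tilde{\mathbf{t}}_n)=\psi(\tau(u),\tau\tilde{\mathbf{t}}_n)$ for every bijection $\tau$, which gives equivariance of the sorted list and hence of the confidence set after an inclusive tie-breaking rule. I would highlight this as the only substantive (but routine) verification; beyond that, the corollary is a direct corollary of Lemma~\ref{lem:comparison} and the three cited results, with no further asymptotic analysis needed.
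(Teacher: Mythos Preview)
Your proposal is correct and follows essentially the same approach as the paper: invoke Lemma~\ref{lem:comparison} with the confidence sets $C_{K(\delta\epsilon),\psi}$ supplied by \citet[][Theorems~5 and~6]{bubeck2017finding} and \citet[][Corollary~1]{khim2017confidence}, noting their labeling-equivariance and asymptotic coverage, and read off the three displayed bounds. The paper's proof is terser but identical in substance; your added remarks on verifying equivariance of the scoring functions and on tie-breaking are sensible elaborations rather than a different route.
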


\begin{proof}

By \citet[][Theorem~5]{bubeck2017finding}, we know that when $\mathbf{T}_n$ has the uniform attachment distribution, there exists a labeling-equivariant scoring function $\psi$ such that for any $\epsilon \in (0,1)$, the set $C_{K_{ua}(\epsilon), \psi}(\cdot)$ with size $K_{ua}(\epsilon)$ contains the root with at least $1-\epsilon$ probability asymptotically. The first part of the Theorem thus follows.

We may obtain the other two claims of the Theorem in identical ways by using \citet[][Theorem~6]{bubeck2017finding} and \citet[][Corollary~1]{khim2017confidence}.
\end{proof}

From Corollary~\ref{cor:confidence_set_size}, we see that in all three cases, the random size $K_\epsilon(\mathbf{T}^*_n) = \#B_{\epsilon}(\mathbf{T}^*_n)$ is $O_p(1)$ as $n \rightarrow \infty$, which shows that the size of the confidence set is of a constant order even when the number of nodes tends to infinity. Moreover, the median size is asymptotically at most $K_{ua}(\epsilon/2), K_{pa}(\epsilon/2), K_{\text{reg}, D}(\epsilon/2)$ respectively for each of the three cases. As we show in our simulation studies (see e.g. Tables~\ref{tab:conf_size1}), these bounds tend to be very conservative. 

Since the size of our confidence set $K_\epsilon(\mathbf{T}^*_n)$ depends on the observed tree $\mathbf{T}^*_n$, it is adaptive to the underlying distribution of $\mathbf{T}_n$. In contrast, the sizes of the confidence sets considered in \cite{bubeck2017finding} and \cite{khim2017confidence} depend only on $\epsilon$ and hence must be chosen with knowledge of the true model. 

\subsection{Equivalence to maximum likelihood}
\label{sec:likelihood}

In this section, we show that $\mathbb{P}(\tilde{\mathbf{T}}_1 = \{u \} \,|\, \tilde{\mathbf{T}}_n = \tilde{\mathbf{t}}_n)$ is proportional to the likelihood of $u$ being the root on observing $\text{sh}(\mathbf{T}_n) = \text{sh}(\tilde{\mathbf{t}}_n)$. Thus, any confidence sets created by ordering the nodes according to their conditional root probability $\mathbb{P}(\tilde{\mathbf{T}} = \{u \} \,|\, \tilde{\mathbf{T}}_n = \tilde{\mathbf{t}}_n)$ also maximizes the likelihood. We first follow \citet[][Section~3]{bubeck2017finding} to derive the likelihood of a node $u$ being the root on observing the unlabeled shape $\text{sh}(\mathbf{T}_n)$. 

Given any labeled trees $\mathbf{t}, \mathbf{t}'$, not necessarily recursive, and two nodes $u \in V(\mathbf{t},), u' \in V(\mathbf{t}')$, we say that $(\mathbf{t}, u)$ and $(\mathbf{t}', u')$ have equivalent rooted shape (written $(\mathbf{t}, u) \sim_0 (\mathbf{t}', u)$) if there exists an isomorphism $\tau \in \text{Bi}(V(\mathbf{t}), V(\mathbf{t}'))$ such that $\tau \mathbf{t} = \mathbf{t}'$ and $\tau(u) = u'$. We then define the rooted shape of $(\mathbf{t}, u)$ as the equivalence class
\[
\text{sh}_0(\mathbf{t}, u) = \{ \mathbf{t}' \text{ labeled tree, } u' \in V(\mathbf{t}')  \,:\, (\mathbf{t}', u') \sim_0 (\mathbf{t}, u) \}.
\]
We give examples of rooted shapes with 4 nodes in Figure~\ref{fig:4nodes-example}.

For any labeled tree $\mathbf{t}$, not necessarily recursive, and for any node $u \in V(\mathbf{t})$, we define the set of indistinguishable nodes as 
\begin{align}
\text{Eq}(u, \mathbf{t}) := \{ v \in V(\mathbf{t}) \,:\, (\mathbf{t}, v) \in \text{sh}_0(\mathbf{t}, u)) \}
\label{eq:equivalent-nodes}
\end{align}
as the set of all nodes $v \in \mathcal{U}_n$ where rooting $\tilde{\mathbf{t}}_n$ at either node $u$ or $v$ yield the same rooted shape. In other words, $\text{Eq}(u, \tilde{\mathbf{t}}_n)$ is the set of all the nodes of $\tilde{\mathbf{t}}_n$ that are indistinguishable from $u$ once we remove the node labels. We give examples of indistinguishable nodes in Figure~\ref{fig:4nodes-example}.

Let $\mathbf{T}_n$ be a random recursive tree and suppose we observe that $\text{sh}(\mathbf{T}_n) = \mathbf{s}_n$ where $\mathbf{s}_n$ is a shape with an arbitrary alphabetically labeled representation $\tilde{\mathbf{t}}_n$. For any node $u \in \mathcal{U}_n$, the likelihood that any node in $\text{Eq}(u, \tilde{\mathbf{t}}_n)$ is the root is then the sum of the probabilities of all outcomes of the random recursive tree $\mathbf{T}_n$ that has the same rooted shape as $(\tilde{\mathbf{t}}_n, u)$. Since $\text{Eq}(u, \tilde{\mathbf{t}}_n)$ may contain multiple nodes, we then divide by the size of the set $\text{Eq}(u, \tilde{\mathbf{t}}_n)$ to obtain the likelihood of node $u$ being the root. 

To be precise, for a labeled tree $\tilde{\mathbf{t}}_n$ and a node $u \in \mathcal{U}_n$, we define
\[
\text{recur}(\tilde{\mathbf{t}}_n, u) := \{ \mathbf{t} \in \mathcal{T}^R_n \,:\, (\mathbf{t}, 1) \in \text{sh}_0(\tilde{\mathbf{t}}_n, u)\}
\]
which is the set of all distinct recursive trees that have the same rooted shape as $\tilde{\mathbf{t}}_n$ rooted at $u$. The likelihood of a node $u$ is then
\begin{align}
    \mathcal{L}(u, \tilde{\mathbf{t}}_n) &= \frac{1}{\# \text{Eq}(u, \tilde{\mathbf{t}}_n)} \sum_{\mathbf{t} \in \text{recur}(\tilde{\mathbf{t}}_n, u)} \mathbb{P}( \mathbf{T}_n = \mathbf{t})
    \label{eq:likelihood-definition}
\end{align}
where we divide by $\# \text{Eq}(u, \tilde{\mathbf{t}}_n)$ to account for multiplicity of indistinguishable nodes. If $\mathbf{T}_n$ is shape exchangeable, then $\mathbb{P}(\mathbf{T}_n = \mathbf{t})$ is a constant for any recursive tree $\mathbf{t}$ with the same shape as that of $\tilde{\mathbf{t}}_n$ and thus
\begin{align*}
\mathcal{L}(u, \tilde{\mathbf{t}}_n) \propto  \frac{ \# \text{recur}(\tilde{\mathbf{t}}_n, u)}{\# \text{Eq}(u, \tilde{\mathbf{t}}_n)} \qquad \text{(for shape exchangeable models)}
\end{align*}

The number of distinct recursive trees $\# \text{recur}(\tilde{\mathbf{t}}_n, u)$ is in general smaller than the number of distinct histories $\#\text{hist}(\tilde{\mathbf{t}}_n, u)$, see for example Figure~\ref{fig:4nodes-example}.  \citet[][Proposition~1]{bubeck2017finding} derives an exact expression of the $\# \text{recur}(\tilde{\mathbf{t}}_n, u)$ in terms of isomorphism classes of the subtrees of $\tilde{\mathbf{t}}_n$. This is difficult to compute however as it requires counting the automorphism classes of the subtrees.

%This cannot be efficiently computed since counting the number of isomorphic subtrees is a $\sharp P$-complete problem \citep{goldberg2000counting}.

\begin{figure}
    \centering
    \includegraphics[scale=.4]{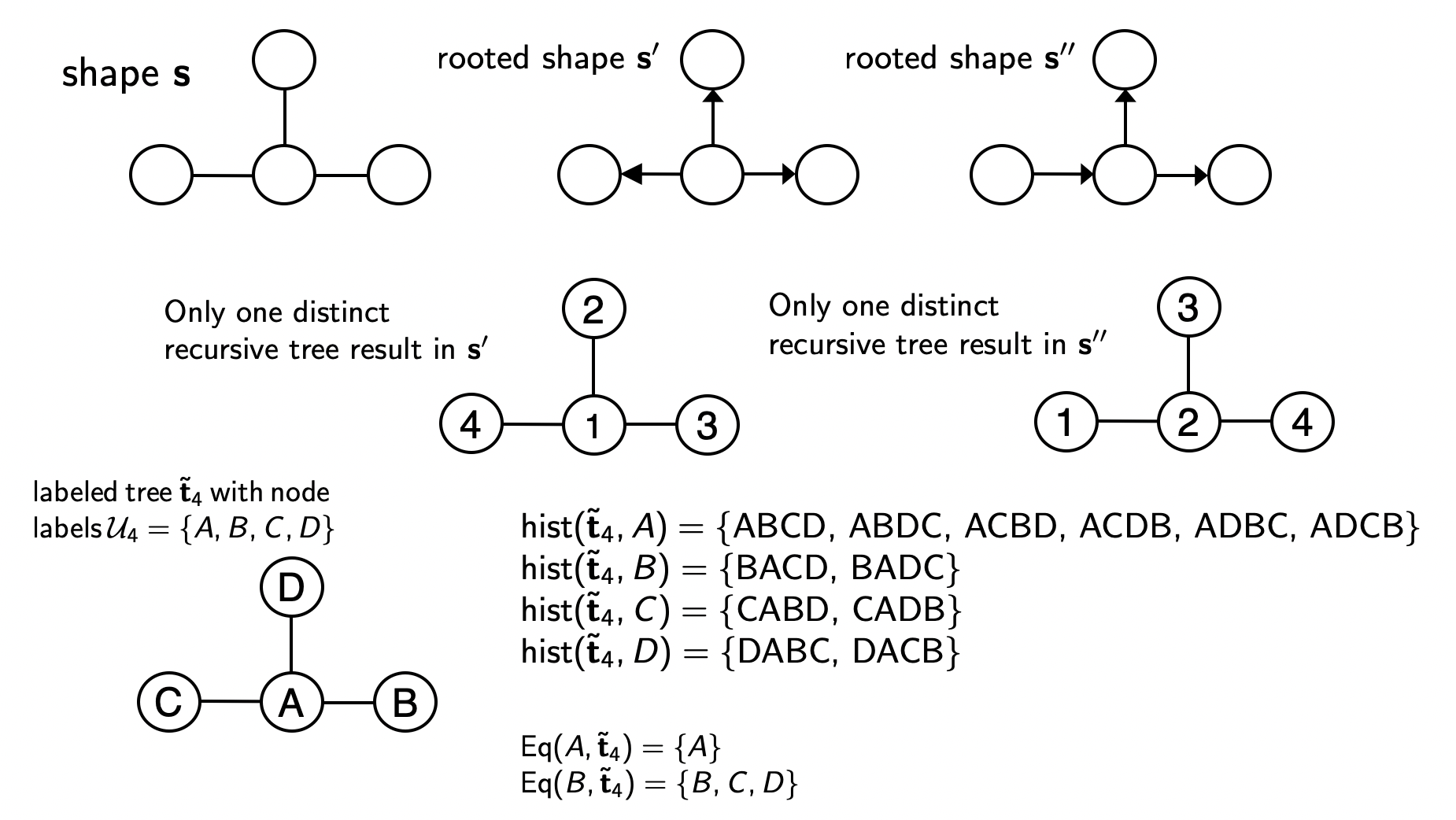}
    \caption{Example on a 4 node tree.}
    \label{fig:4nodes-example}
\end{figure}

Since the likelihood $\mathcal{L}(u, \tilde{\mathbf{t}}_n)$ is based on counting the recursive trees $\# \text{recur}(\tilde{\mathbf{t}}_n, u)$ and the conditional root probability $\mathbb{P}(\tilde{\mathbf{T}}_1 = \{u\} \,|\, \tilde{\mathbf{T}}_n = \tilde{\mathbf{t}}_n )$ is based on the counting the histories $\# \text{hist}(\tilde{\mathbf{t}}_n, u)$, it is not obvious how to relate the two. Our main result this section is the next proposition which shows that they in fact induce the same ordering of the nodes of a tree $\tilde{\mathbf{t}}_n$. We note this proposition strengthens the existing results in literature; in particular, it implies that \citet[][Theorem~5]{bubeck2017finding} applies in fact to the exact MLE instead of an approximate MLE as previously believed. 

\begin{theorem}
\label{thm:likelihood-equivalence}
Let $\mathbf{T}_n$ be a random recursive tree, not necessarily shape exchangeable, and let $\tilde{\mathbf{T}}_1, \ldots, \tilde{\mathbf{T}}_n$ be the corresponding random history. For any labeled tree $\tilde{\mathbf{t}}_n$ with node labels taking value in $\mathcal{U}_n$, we have that, for all $u \in \mathcal{U}_n$,
\[
\mathbb{P}(\tilde{\mathbf{T}} = \{u\} \,|\, \tilde{\mathbf{T}}_n = \tilde{\mathbf{t}}_n) = 
\frac{\mathcal{L}(u, \tilde{\mathbf{t}}_n)}{\sum_{v \in \mathcal{U}_n} \mathcal{L}(v, \tilde{\mathbf{t}}_n)} = 
\frac{1}{\# \text{Eq}(u, \tilde{\mathbf{t}}_n)} \mathbb{P}( (\mathbf{T}_n, 1) \in \text{sh}_0(\tilde{\mathbf{t}}_n, u) \,|\, \mathbf{T}_n \in \text{sh}(\tilde{\mathbf{t}}_n) ).
\]
\end{theorem}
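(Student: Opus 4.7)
The plan is to compute $\mathbb{P}(\tilde{\mathbf{T}}_1 = \{u\}, \tilde{\mathbf{T}}_n = \tilde{\mathbf{t}}_n)$ explicitly by expanding it as a sum of single-history probabilities via~\eqref{eq:prob_single_history}, and then to normalize. Recall that a history is encoded by a pair $(\mathbf{t}_n, \pi)$ with $\mathbf{t}_n \in \mathcal{T}^R_n$ and $\pi \in \text{Bi}([n], \mathcal{U}_n)$, and the outcome $\tilde{\mathbf{T}}_n = \tilde{\mathbf{t}}_n$ with $\tilde{\mathbf{T}}_1 = \{u\}$ corresponds to those pairs with $\pi \mathbf{t}_n = \tilde{\mathbf{t}}_n$ and $\pi(1) = u$. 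Since $\Pi$ is uniform on $\text{Bi}([n],\mathcal{U}_n)$, equation~\eqref{eq:prob_single_history} gives
\[
\mathbb{P}(\tilde{\mathbf{T}}_1 = \{u\}, \tilde{\mathbf{T}}_n = \tilde{\mathbf{t}}_n) \;=\; \frac{1}{n!}\sum_{\substack{(\mathbf{t}_n,\pi)\,:\,\pi\mathbf{t}_n=\tilde{\mathbf{t}}_n\\ \pi(1)=u}} \mathbb{P}(\mathbf{T}_n = \mathbf{t}_n).
\]

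The main technical step is a group-theoretic counting argument. For a fixed $\mathbf{t}_n \in \mathcal{T}^R_n$, there exists a $\pi$ with $\pi\mathbf{t}_n = \tilde{\mathbf{t}}_n$ and $\pi(1) = u$ if and only if $(\mathbf{t}_n, 1) \sim_0 (\tilde{\mathbf{t}}_n, u)$, i.e., $\mathbf{t}_n \in \text{recur}(\tilde{\mathbf{t}}_n, u)$. When such a $\pi = \tau_0$ exists, the set of all isomorphisms $\pi$ with $\pi\mathbf{t}_n = \tilde{\mathbf{t}}_n$ equals $\{\sigma\tau_0 : \sigma \in \text{Aut}(\tilde{\mathbf{t}}_n)\}$, and imposing $\pi(1) = u$ restricts $\sigma$ to the stabilizer of $u$. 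By orbit–stabilizer, the cardinality of this stabilizer is $|\text{Aut}(\tilde{\mathbf{t}}_n)|/\#\text{Eq}(u,\tilde{\mathbf{t}}_n)$. Substituting gives
\[
\mathbb{P}(\tilde{\mathbf{T}}_1 = \{u\}, \tilde{\mathbf{T}}_n = \tilde{\mathbf{t}}_n) \;=\; \frac{|\text{Aut}(\tilde{\mathbf{t}}_n)|}{n!\,\#\text{Eq}(u,\tilde{\mathbf{t}}_n)} \sum_{\mathbf{t}\in \text{recur}(\tilde{\mathbf{t}}_n,u)} \mathbb{P}(\mathbf{T}_n=\mathbf{t}) \;=\; \frac{|\text{Aut}(\tilde{\mathbf{t}}_n)|}{n!}\,\mathcal{L}(u,\tilde{\mathbf{t}}_n).
\]

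Summing over $u \in \mathcal{U}_n$ yields $\mathbb{P}(\tilde{\mathbf{T}}_n = \tilde{\mathbf{t}}_n) = (|\text{Aut}(\tilde{\mathbf{t}}_n)|/n!)\sum_v \mathcal{L}(v,\tilde{\mathbf{t}}_n)$, which after dividing establishes the first claimed equality. For the second equality, I will observe that the sets $\text{recur}(\tilde{\mathbf{t}}_n,v)$ depend on $v$ only through its $\text{Eq}$-class, and that as $v$ ranges over the distinct $\text{Eq}$-classes these sets partition $\{\mathbf{t}\in\mathcal{T}^R_n : \text{sh}(\mathbf{t}) = \text{sh}(\tilde{\mathbf{t}}_n)\}$. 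Grouping the sum $\sum_v \mathcal{L}(v,\tilde{\mathbf{t}}_n)$ by $\text{Eq}$-class then telescopes the factor $1/\#\text{Eq}(v,\tilde{\mathbf{t}}_n)$ and yields $\mathbb{P}(\mathbf{T}_n \in \text{sh}(\tilde{\mathbf{t}}_n))$, so the ratio $\mathcal{L}(u,\tilde{\mathbf{t}}_n)/\sum_v \mathcal{L}(v,\tilde{\mathbf{t}}_n)$ simplifies to $\frac{1}{\#\text{Eq}(u,\tilde{\mathbf{t}}_n)} \mathbb{P}((\mathbf{T}_n,1)\in\text{sh}_0(\tilde{\mathbf{t}}_n,u)\,|\,\mathbf{T}_n\in\text{sh}(\tilde{\mathbf{t}}_n))$.

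I expect the main obstacle to be the orbit–stabilizer counting of admissible bijections $\pi$: one must be careful that the count $|\text{Aut}(\tilde{\mathbf{t}}_n)|/\#\text{Eq}(u,\tilde{\mathbf{t}}_n)$ is the same for \emph{every} $\mathbf{t}_n \in \text{recur}(\tilde{\mathbf{t}}_n,u)$, which is what allows the factor to come out of the sum cleanly. Everything else (the marginalization and the partition argument for $\sum_v \mathcal{L}$) is routine bookkeeping once that step is in place, and notably no shape-exchangeability assumption on $\mathbf{T}_n$ is needed.
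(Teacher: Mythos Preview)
Your proposal is correct and follows essentially the same approach as the paper's proof. The paper's key identity $\#I(\mathbf{t},\mathbf{t}') = \#I(\mathbf{t},u,\mathbf{t}',u')\cdot\#\text{Eq}(u,\mathbf{t})$ (with $\#I(\tilde{\mathbf{t}}_n,\tilde{\mathbf{t}}_n)=|\mathrm{Aut}(\tilde{\mathbf{t}}_n)|$) is exactly your orbit--stabilizer count of admissible $\pi$, and the paper likewise factors this constant out of the sum over $\text{recur}(\tilde{\mathbf{t}}_n,u)$ and then partitions by $\text{Eq}$-class to obtain $\sum_v \mathcal{L}(v,\tilde{\mathbf{t}}_n)=\mathbb{P}(\mathbf{T}_n\in\text{sh}(\tilde{\mathbf{t}}_n))$.
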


We defer the proof of Theorem~\ref{thm:likelihood-equivalence} to Section~\ref{sec:likelihood-proof} in the appendix.

\begin{remark}
\label{rem:unlabeled-conditional}
From Theorem~\ref{thm:likelihood-equivalence}, we see that, for a labeled tree $\tilde{\mathbf{t}}_n$ and a node $u \in V(\tilde{\mathbf{t}}_n)$, we can define the conditional probability that $u$ is the root node with the quantity $\frac{1}{\# \text{Eq}(u, \tilde{\mathbf{t}}_n)} \mathbb{P}( (\mathbf{T}_n, 1) \in \text{sh}_0(\tilde{\mathbf{t}}_n, u) \,|\, \mathbf{T}_n \in \text{sh}(\tilde{\mathbf{t}}_n) )$, without the use of the label randomized tree $\tilde{\mathbf{T}}_n$. We divide by $\# \text{Eq}(u, \tilde{\mathbf{t}}_n)$ to address the issue that there may be multiple nodes that are indistinguishable from node $u$. Theorem~\ref{thm:likelihood-equivalence} shows that this is equivalent to the the label randomized conditional probability $\mathbb{P}( \tilde{\mathbf{T}}_1 = \{u\} \,|\, \tilde{\mathbf{T}}_n = \tilde{\mathbf{t}}_n)$. We use the latter expression for its simplicity. 
\end{remark}

\section{General Inference Problems}\label{section:general}

The approach that we take for root inference and the structural properties that we proved for shape exchangeable processes may also be used for general questions of interest about unobserved histories. To formalize general inference problems, we let $\mathbf{T}^*_n$ be the labeled representation of the observed shape $\text{sh}(\mathbf{T}_n)$ and let $\rho \in \text{Bi}([n], \mathcal{U}_n)$ be the label bijection that gives $\rho \mathbf{T}_n = \mathbf{T}_n^*$. Let $S$ be a discrete set and let $f_\rho : \mathcal{T}^R_n \rightarrow S$ be a function. For a confidence level $\epsilon \in (0, 1)$, our inference problem is to construct a confidence set $C_{\epsilon}(\mathbf{T}^*_n) \subset S$ such that
\begin{align}
\mathbb{P}\bigl( f_\rho(\mathbf{T}_{n}) \in C_{\epsilon}( \mathbf{T}^*_n) \bigr) \geq 1 - \epsilon. 
\label{eq:general-coverage}
\end{align}

\begin{example}
By taking $f_\rho(\mathbf{T}_n) = \text{root}_\rho(\mathbf{T}_n)$ and $S = \mathcal{U}_n$, we obtain the root inference problem (Section~\ref{sec:inference-problem}). A natural extension of the root inference problem is to infer the earliest $k$ nodes to appear, which is known as the seed-tree inference and has been studied by \cite{devroye2018discovery}; in seed-tree inference, for a given $k \in \mathbb{N}$, we let $S$ be the set of all subtrees of $\mathbf{T}^*_n$ of size $k$ and let $f_\rho(\mathbf{T}_n) = \text{seed}^{(k)}_\rho(\mathbf{T}_n) := \rho \mathbf{T}_k$. 

%where $\rho|_{[k]}$ is the domain restriction of $\rho$ to $[k]$, i.e., $\rho|_{[k]}:[k]\to\mathcal{U}_n$, $j\mapsto\rho(j)$. 

%The confidence set $C_\epsilon(\mathbf{T}^*_n)$ for $\text{seed}^{(k)}_\rho(\mathbf{T}_n)$ would be a set of subtrees of size $k$ of $\mathbf{T}^*_n$. As before, we require that $\tau( C_\epsilon(\mathbf{T}^*_n)) = C_\epsilon(\tau \mathbf{T}^*_n)$. 
\end{example}

\begin{example}
For a fixed node $u \in \mathcal{U}_n$, we may infer the time at which it appeared in the observed tree. We let $S = [n]$ and define the random arrival time of node $u$ as 
\[
\text{Arr}^{(u)}_\rho( \mathbf{T}_{n}) := \min \{ t \in [n] \,:\, u \in V(\rho \textbf{T}_t) \} = \rho^{-1}(u).
\]
The problem is then to construct, for a given $\epsilon \in (0, 1)$, a confidence set $C^{(u)}_\epsilon(\mathbf{T}^*_n) \subset [n]$ such that the true arrival time $\text{Arr}^{(u)}_\rho(\mathbf{T}_n)$ is contained in $C^{(u)}_\epsilon$ with probability at least $1 - \epsilon$. This problem is well-defined for confidence sets that satisfy a labeling-equivariance condition; we defer the technical details to Section~\ref{sec:general-inference-supplement} of the appendix. 
\end{example}

We may consider a number of other inference problems, e.g., given a subset of nodes $\mathcal{V} \subset \mathcal{U}_n$, what is the order in which they were infected? The approach given below can be applied quite generally to most conceivable questions of this kind.  These may all be formalized in a manner identical to the examples that we have shown.  

In these cases, we replace an exact calculation of the conditional probability by Monte Carlo approximation. In particular, we derive two different computationally efficient sampling protocols to generate a history of a given tree uniformly at random. To make the discussion concrete, consider again the arrival time of a given node $u \in \mathcal{U}_n$: $\text{Arr}^{(u)}_{\rho}(\mathbf{T}_n) = \rho^{-1}(u) \in [n]$ for $\rho \in \text{Bi}([n], \mathcal{U}_n)$ such that $\rho \mathbf{T}_n = \mathbf{T}^*_n$. In this case, for a given tree $\tilde{\mathbf{t}}_n$, we may construct the confidence set by first computing  $B^{(u)}_{\epsilon}(\tilde{\mathbf{t}}_n)$ as the smallest subset of $[n]$ such that
\begin{align*}
\mathbb{P}( \Pi^{-1}(u) \in B^{(u)}_\epsilon(\tilde{\mathbf{t}}_n) \,|\, \tilde{\mathbf{T}}_n = \tilde{\mathbf{t}}_n ) 
&= \sum_{t \in B^{(u)}_\epsilon(\tilde{\mathbf{t}}_n)} \mathbb{P}( u \in \tilde{\mathbf{T}}_t \text{ and } u \notin \tilde{\mathbf{T}}_{t-1} \,|\, \tilde{\mathbf{T}}_n = \tilde{\mathbf{t}}_n ) \geq 1 - \epsilon.
\end{align*}
where $\Pi$ is a random bijection distributed uniformly in $\text{Bi}([n], \mathcal{U}_n)$ such that $\Pi \mathbf{T}_n = \tilde{\mathbf{T}}_n$ and where we take $\tilde{\mathbf{T}}_0 $ as the empty set. Following Theorem~\ref{thm:frequentist-coverage}, we may again show that $B^{(u)}_\epsilon(\tilde{\mathbf{t}}_n)$ has valid Frequentist coverage; we defer the formal statement and proof of this claim to Section~\ref{sec:general-inference-supplement}.

To compute the confidence set $B^{(u)}_\epsilon(\tilde{\mathbf{t}}_n)$, we need to compute, for each $t \in [n]$, the conditional probability
\begin{align}
\mathbb{P}( u \in \tilde{\mathbf{T}}_t \text{ and } u \notin \tilde{\mathbf{T}}_{t-1} \,|\, \tilde{\mathbf{T}}_n = \tilde{\mathbf{t}}_n ).
\label{eqn:conditional-arrival-time}
\end{align}
We propose a Monte Carlo approximation where we generate independent samples $\{ \tilde{\mathbf{T}}_1^{(m)}, \ldots, \tilde{\mathbf{T}}_{n-1}^{(m)} \}_{m=1}^M$ from the conditional history distribution $\mathbb{P}( \tilde{\mathbf{T}}_1, \ldots, \tilde{\mathbf{T}}_{n-1} \,|\, \tilde{\mathbf{T}}_n = \tilde{\mathbf{t}}_n)$. For an event $E$, we may then approximate
\[
\mathbb{P}( E \,|\, \tilde{\mathbf{T}}_n = \tilde{\mathbf{t}}_n) \approx \frac{1}{M} \sum_{m=1}^M \mathbbm{1}\bigl\{ \tilde{\mathbf{T}}_1^{(m)}, \ldots, \tilde{\mathbf{T}}_{n-1}^{(m)} \in E \bigr\}.
\]
For example, we may approximate the probability of node $u$ arriving at time $t$ (see~\eqref{eqn:conditional-arrival-time})
by 
\[
\frac{1}{M} \sum_{m=1}^{M} \mathbbm{1}\{ u \in \tilde{\mathbf{T}}_t^{(m)} \text{ and } u \notin \tilde{\mathbf{T}}_{t-1}^{(m)} \}.
\]
In the next section, we assume shape exchangeability and show two exact sampling schemes which allow us to efficiently carry out this approach. In Section~\ref{sec:importance_sampling}, we devise an importance sampling scheme for computing the conditional probability under a general process, not necessarily shape exchangeable. 

%For this procedure to be feasible, we need to be able to efficiently generate a sample from the distribution of the conditional history. 

\subsection{Sampling for shape exchangeable processes}

If $\mathbf{T}_n$ is shape exchangeable, then we may generate a sample $\{ \tilde{\mathbf{T}}_1^{(m)},\ldots, \tilde{\mathbf{T}}_{n-1}^{(m)}\}$ by drawing a single history from the set $\text{hist}(\tilde{\mathbf{t}}_n)$ uniformly at random.  In general, the total number of histories is large, but uniform sampling can be carried out sequentially by either 
\begin{itemize}
    \item[(i)] {\em forward sampling}, which builds up a realization from the conditional distribution by a sequential process of adding nodes to the following scheme, or
    \item[(ii)] {\em backward sampling}, which recreates a history from the observed shape by sequentially removing nodes according to the correct conditional distributions.
\end{itemize}

Throughout this section, it will be convenient to think of a history $\tilde{\mathbf{T}}_1 \subset \tilde{\mathbf{T}}_2 \subset \ldots \subset \tilde{\mathbf{T}}_{n-1}$ as an ordered sequence $\mathbf{u} = (\mathbf{u}_1, \mathbf{u}_2, \ldots, \mathbf{u}_n) \in \mathcal{U}_n^n$ where $\tilde{\mathbf{T}}_k = \tilde{\mathbf{T}}_n \cap \{\mathbf{u}_1, \ldots, \mathbf{u}_k\}$ for every $k \in [n]$.

\subsubsection{Forward sampling}

Conditional on the final shape $\text{sh}(\mathbf{T}_n)=\mathbf{s}_n$, a uniform random history can be generated by an analog to the P\'olya urn process.  We generate the first element $\mathbf{u}_1$ (root) of the history from the distribution $\mathbb{P}( \tilde{\mathbf{T}}_1 = \cdot \,|\, \tilde{\mathbf{T}}_n = \tilde{\mathbf{t}}_n )$ over $\mathcal{U}_n$, where the conditional root probability distribution can be computed in $O(n)$ time through Algorithm~\ref{alg:count_history}. Once the root of the history is fixed, we then sequentially choose the next node $u$ by size-biased sampling based on the size of the subtree rooted root at $u$ away from the root $\mathbf{u}_1$.

More explicitly, let $\tilde{\mathbf{t}}_n$ be a tree labeled in $\mathcal{U}_n$ and rooted at $u_1\in\mathcal{U}_n$.  For any $v\in\mathcal{U}_n$, we write $\tilde{\mathbf{t}}_v^{(u_1)}$ to denote the subtree of $\tilde{\mathbf{t}}_n$ rooted at node $v$ away from $u_1$ and $n_v^{(u_1)}:=\#\mathbf{\tilde{t}}_v^{(u_1)}$ as the size of the subtree.  Given the shape of $\tilde{\mathbf{t}}_n$, we generate a history $(u_1,\ldots,u_n)\in\text{hist}(\mathbf{t}_n)$ by
\begin{itemize}
    \item sampling $u_1$ from $\mathbb{P}(\tilde{\mathbf{T}}_1=\cdot\mid\tilde{\mathbf{T}}_n=\tilde{\mathbf{t}}_n)$ and
    \item given $u_1,u_2,\ldots,u_{k-1}$, sampling $u_{k}$ from among the remaining nodes in $\mathcal{U}_n-\{u_1,\ldots,u_{k-1}\}$ according to
\begin{align}
\mathbb{P}( \mathbf{u}_k = u_k \,|\, \tilde{\mathbf{T}}_n =\tilde{\mathbf{t}}_n,\, \mathbf{u}_1 = u_1, \ldots, \mathbf{u}_{k-1} = u_{k-1}) =\left\{\begin{array}{cc}  \frac{n^{(u_1)}_{u_k}}{n-k+1}, & \{u_1,\ldots,u_k\}\in\text{hist}(\tilde{\mathbf{t}}_n,u_1),\\
0, & \text{otherwise.}
\end{array}\right.
\label{eq:k-node-history}
\end{align}
\end{itemize}
We note that~\eqref{eq:k-node-history} is a well-defined probability distribution because once we have fixed the first $k-1$ nodes of the history $(u_1, u_2, \ldots, u_{k-1})$, the probability on the left hand side of~\eqref{eq:k-node-history} is positive only for a neighbor $v$ of $(u_1, u_2, \ldots, u_{k-1})$. Summing $n^{(u_1)}_{v}$ over all neighbors $v$ of $(u_1, u_2, \ldots, u_{k-1})$ gives exactly the denominator $n-k+1$ of~\eqref{eq:k-node-history}.

The coming proposition shows that the result of this process is a valid draw from the conditional distribution of the history given $\sh(\tilde{\mathbf{T}}_n)$.

\begin{proposition}
\label{prop:remaining-history-prob}
Let $\mathbf{T}_n$ be a shape exchangeable preferential attachment process and $\tilde{\mathbf{T}}_n$ be a randomly labeled element of $\text{sh}(\mathbf{T}_n)$.    %conditional distribution of 
%For a given labeled tree $\tilde{\mathbf{t}}_n$ with node labels that take value in $\mathcal{U}_n$ and a given node $u_1 \in \mathcal{U}_n$, we view $\tilde{\mathbf{t}}_n$ as being rooted at node $u_1$. As in Proposition~\ref{prop:count-rooted-history}, for any $v \in \mathcal{U}_n$, we define the tree $\tilde{\mathbf{t}}^{(u_1)}_v$ as the subtree of $\tilde{\mathbf{t}}_n$ rooted at node $v$ and $n^{(u_1)}_v = \# \tilde{\mathbf{t}}^{(u_1)}_v$ as the size of the subtree. We have that,
For any sequence of nodes $(u_1, u_2, \ldots, u_n) \in \text{hist}(\tilde{\mathbf{t}}_n, u_1)$, the conditional distribution of $\mathbf{u}_k$ given $\{\tilde{\mathbf{T}}_n = \tilde{\mathbf{t}}_n,\, \mathbf{u}_1 = u_1, \ldots, \mathbf{u}_{k-1} = u_{k-1}\}$ the conditional distribution in \eqref{eq:k-node-history}.
%satisfies  have  
%\begin{align}
%\mathbb{P}( \mathbf{u}_k = u_k \,|\, %\tilde{\mathbf{T}}_n = %\tilde{\mathbf{t}}_n,\, \mathbf{u}_1 = u_1, %\ldots, \mathbf{u}_{k-1} = u_{k-1}) = %\frac{n^{(u_1)}_{u_k}}{n-k+1} , %\label{eq:k-node-history}
%\end{align}
\end{proposition}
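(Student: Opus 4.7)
The plan is to leverage Proposition~\ref{prop:uniform-conditional}, which asserts that the conditional distribution of the history given the final state is uniform over $\text{hist}(\tilde{\mathbf{t}}_n)$. Conditioning further on $\mathbf{u}_1 = u_1$ restricts this uniform distribution to $\text{hist}(\tilde{\mathbf{t}}_n, u_1)$. Consequently, the conditional probability
\[
\mathbb{P}(\mathbf{u}_k = u_k \mid \tilde{\mathbf{T}}_n = \tilde{\mathbf{t}}_n,\, \mathbf{u}_1 = u_1, \ldots, \mathbf{u}_{k-1} = u_{k-1})
\]
becomes a ratio of counts: the number of histories in $\text{hist}(\tilde{\mathbf{t}}_n, u_1)$ whose first $k$ entries agree with $(u_1, \ldots, u_{k-1}, u_k)$, divided by the corresponding count for the prefix $(u_1, \ldots, u_{k-1})$. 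So the task reduces to deriving a clean formula for
\[
N(u_1, \ldots, u_{k-1}) := \#\bigl\{ (v_1, \ldots, v_n) \in \text{hist}(\tilde{\mathbf{t}}_n, u_1) \,:\, v_i = u_i \text{ for } i \leq k-1 \bigr\}
\]
and then computing the ratio.

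The key step will be to analyze the residual structure after excising the prefix. Since $\{u_1, \ldots, u_{k-1}\}$ is the node set of a connected subtree rooted at $u_1$ (this is forced by membership in $\text{hist}(\tilde{\mathbf{t}}_n, u_1)$), removing it leaves a forest of rooted subtrees $T_1, \ldots, T_L$, where each $T_j$ is rooted at some child $w_j$ of $\{u_1, \ldots, u_{k-1}\}$ in $\tilde{\mathbf{t}}_n$, with $|T_j| = n_{w_j}^{(u_1)}$. A completion of the prefix is precisely a linear extension of this forest, which is counted as a multinomial interleaving of linear extensions of the individual trees. Applying Proposition~\ref{prop:count-rooted-history} to each $T_j$ (noting that $n_v^{(w_j)} = n_v^{(u_1)}$ for all $v \in T_j$), I would combine the multinomial coefficient with the individual history counts to obtain, after algebraic simplification,
\[
N(u_1, \ldots, u_{k-1}) = (n-k+1)! \prod_{v \notin \{u_1, \ldots, u_{k-1}\}} \frac{1}{n_v^{(u_1)}}.
\]

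With this in hand, the ratio telescopes cleanly: when $u_k$ is a neighbor of $\{u_1, \ldots, u_{k-1}\}$ (so the extension is admissible),
\[
\frac{N(u_1, \ldots, u_{k-1}, u_k)}{N(u_1, \ldots, u_{k-1})} = \frac{(n-k)!}{(n-k+1)!}\cdot n_{u_k}^{(u_1)} = \frac{n_{u_k}^{(u_1)}}{n-k+1},
\]
and the ratio is zero when $u_k$ is not a valid next node, matching \eqref{eq:k-node-history} exactly. The main obstacle, though a mild one, is bookkeeping in the forest-counting step: verifying that the multinomial coefficient times the product of individual history counts simplifies to the stated product form, and that the change in the product as $u_k$ transitions from a root of $T_j$ to being absorbed into the prefix contributes exactly the factor $n_{u_k}^{(u_1)}$. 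No additional ingredients beyond shape exchangeability (via Proposition~\ref{prop:uniform-conditional}) and the counting formula of Proposition~\ref{prop:count-rooted-history} are needed.
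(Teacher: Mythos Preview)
Your argument is correct and relies on exactly the same two ingredients as the paper: the uniform conditional distribution from Proposition~\ref{prop:uniform-conditional} and the product formula of Proposition~\ref{prop:count-rooted-history}. The organizational difference is one of direction. You work \emph{forward}: starting from the known uniform law on $\text{hist}(\tilde{\mathbf{t}}_n)$, you derive a closed form for the prefix count $N(u_1,\ldots,u_{k-1})=(n-k+1)!\prod_{v\notin\{u_1,\ldots,u_{k-1}\}} 1/n_v^{(u_1)}$ via a forest decomposition, and then read off the conditional as a ratio. The paper works \emph{backward}: it takes the formula \eqref{eq:k-node-history} as given, multiplies the conditionals together with the root probability, observes that the product telescopes to $\bigl((n-1)!\bigr)^{-1}\prod_{v\neq u_1} n_v^{(u_1)}$, and then invokes Proposition~\ref{prop:count-rooted-history} and \eqref{eq:conditional-root-probability} to identify this with $1/\#\text{hist}(\tilde{\mathbf{t}}_n)$. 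The paper's verification is slightly shorter because it bypasses the explicit forest-interleaving count; your approach has the advantage of yielding the intermediate quantity $N(u_1,\ldots,u_{k-1})$ explicitly, which makes the formula's origin more transparent. Either way, the underlying identity is the same telescoping product.
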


\begin{proof} 
%(Proof of Proposition~\ref{prop:remaining-history-prob})\\
Let the labeled tree $\tilde{\mathbf{t}}_n$ be fixed and suppose $(\mathbf{u}_1, \mathbf{u}_2, \ldots, \mathbf{u}_{n-1})$ is a random history where $\mathbb{P}(\mathbf{u}_1 = u \,|\, \tilde{\mathbf{T}}_n = \tilde{\mathbf{t}}_n) = \mathbb{P}(\tilde{\mathbf{T}}_1 = \{u\} \,|\, \tilde{\mathbf{T}}_n = \tilde{\mathbf{t}}_n)$ for all $u \in \mathcal{U}_n$ and where the probabilities of $\mathbf{u}_2, \ldots, \mathbf{u}_{n-1}$ are specified by~\eqref{eq:k-node-history}. 

By Proposition~\ref{prop:count-rooted-history} and~\eqref{eq:conditional-root-probability}, we have that, for any fixed history $(u_1, u_2, \ldots, u_{n-1}) \in \text{hist}(\tilde{\mathbf{t}}_n)$,
\begin{align*}
&\mathbb{P}(\mathbf{u}_1 = u_1,\, \mathbf{u}_2 = u_2, \ldots, \mathbf{u}_{n-1} = u_{n-1} \,|\, \tilde{\mathbf{T}}_n = \tilde{\mathbf{t}}_n) \\
&= \mathbb{P}( \tilde{\mathbf{T}}_1 = \{u_1\} \,|\, \tilde{\mathbf{T}}_n = \tilde{\mathbf{t}}_n) \prod_{k=2}^{n-1} \mathbb{P}(\mathbf{u}_k = u_k \,|\, \tilde{\mathbf{T}}_n = \tilde{\mathbf{t}}_n,\, \mathbf{u}_1 = u_1, \ldots, \mathbf{u}_{k-1} = u_{k-1}) \\
&= \mathbb{P}( \tilde{\mathbf{T}}_1 = \{u_1\} \,|\, \tilde{\mathbf{T}}_n = \tilde{\mathbf{t}}_n) \prod_{k=2}^{n-1} \frac{n_{u_k}^{(u_1)}}{n-k+1} \\
&= \mathbb{P}( \tilde{\mathbf{T}}_1 = \{u_1\} \,|\, \tilde{\mathbf{T}}_n = \tilde{\mathbf{t}}_n) \frac{1}{(n-1)!} \prod_{v \in \mathcal{U}_n - \{u_1\}} n_{v}^{(u_1)} = \frac{1}{\#\text{hist}(\tilde{\mathbf{t}}_n)}
\end{align*}
as desired.
\end{proof}

Proposition~\ref{prop:remaining-history-prob} states that to generate the second node of the history given that the first node is $u_1$, we consider all neighbors $v_1, \ldots, v_{L(u_1)}$ of $u_1$ (where $L(u_1)$ denotes the number of neighbors of $u_1$) and choose $v_\ell$ with probability proportional to the size of the subtree $n^{(u_1)}_{v_\ell}$. Continuing in this way, once we have generated the first $k-1$ nodes of the history, we consider all neighbors $v_1, \ldots, v_{L(u_{1:(k-1)})}$ of the subtree $\tilde{\mathbf{t}}_n \cap \{u_1, \ldots, u_{k-1}\}$ and again choose a neighbor $v_\ell$ with probability proportional to the size of the subtree $n^{(u_1)}_{v_\ell}$. 

 The existence of such a sampling scheme for shape exchangeable processes is closely related to other sequential constructions for generating samples from exchangeable processes, such as the Chinese restaurant process found throughout the Bayesian nonparametrics and combinatorial probability literature; see, e.g., \cite{Crane2016ESF} for an overview of various constructions for exchangeable partition processes.  The size-biased sampling without replacement in the above can also be related to general Polya urn schemes which are known to produce exchangeable sequences. 

The computational complexity required to compute the size of all the subtrees $\{n_v^{(u_1)}\}_{v \in \mathcal{U}_n}$ is linear in total number of nodes $n$ because we can use a bottom-up procedure that makes a single pass through all of the nodes of the tree. Therefore, the computational complexity of generating the first $k$ elements of the history depends on the number of neighbors $L(u_1), L(u_{1:2}), \ldots, L(u_{1:(k-1)})$. The number of neighbors could be of order $O(n)$ and thus requiring $O(n^2)$ time to generate a full history $(u_1, \ldots, u_{(n-1)})$. 

We can improve the runtime of generating a full history to $O(n\log n)$ in the worst case in Algorithm~\ref{alg:draw-history}. The algorithm proceeds by drawing the first node of the history $u_1$ from distribution~\eqref{eq:conditional-root-probability}; we now view the input labeled tree $\tilde{\mathbf{t}}_n$ as being rooted at $u_1$. We then generate a random permutation of the node labels $\mathcal{U}_n \setminus \{u_1\}$ and modify the random permutation by swapping the position of a node $v$ with that of its parent $\text{pa}(v)$ if $v$ appears in the permutation before $\text{pa}(v)$. We continue until the random permutation satisfies the constraints necessary to be a valid history. The modification process can be done efficiently through sorting so that the overall runtime is at most $O(n \log \text{diam}(\tilde{\mathbf{t}}_n))$ where $\text{diam}(\tilde{\mathbf{t}}_n)$ is the length of the longest path (diameter) of the tree $\tilde{\mathbf{t}}_n$, as shown in the following proposition. We note that for uniform attachment or linear preferential attachment trees, the diameter is $O_p(\log n)$ (see e.g. \citet[][Theorem~6.32]{drmota2009random} and \citet[][Theorem~18]{bhamidi2007universal}) and so the runtime of Algorithm~\ref{alg:draw-history} is typically $O(n \log\log n)$.

\begin{algorithm}[htp]
\caption{Generating a random history uniformly from $\text{hist}(\tilde{\mathbf{t}}_n)$.}
\label{alg:draw-history}
\textbf{Input:} Labeled tree $\tilde{\mathbf{t}}_n$ whose node labels take value in $\mathcal{U}_n$.\\
\textbf{Output:} A history represented as a sequence $\mathbf{u}_1, \mathbf{u}_2, \ldots, \mathbf{u}_n \in \mathcal{U}_n$ where $\mathbf{u}_1 = u$.
\begin{algorithmic}[1]
\State Set $\mathbf{u}_1 = u$ with probability $\mathbb{P}(\tilde{\mathbf{T}}_1 = \{u\} \,|\, \tilde{\mathbf{T}}_n = \tilde{\mathbf{t}}_n) = \frac{\# \text{hist}(\tilde{\mathbf{t}}_n, u)}{\# \text{hist}(\tilde{\mathbf{t}}_n)}$ (c.f. Algorithm~\ref{alg:count_history}).
\State Generate a permutation $\mathbf{\Sigma} \,:\, \mathcal{U}_n \setminus \{\mathbf{u}_1\} \rightarrow \{2,3,\ldots,n\}$ uniformly at random.
\State Initialize a set $\mathcal{M} = \{ \mathbf{u}_1 \}$.
\For{$t = 2,3, \ldots, n$}:
  \State Let $v = \mathbf{\Sigma}^{-1}(t)$. If $v \in \mathcal{M}$,  continue to the next iteration.
  \State Otherwise, let $v_1 = v$, $v_2 = \text{pa}(v_1),\, \ldots, v_k = \text{pa}(v_{k-1})$ where $k$ is the largest integer such that $v_1, v_2, \ldots, v_k \notin \mathcal{M}$. \Comment{$\text{pa}(v)$ denotes the parent-node of $v$ with respect to $\tilde{\mathbf{t}}_n$ rooted at $u$.}
  \State Write $(t_1, \ldots, t_k) = \{ \mathbf{\Sigma}(v_1), \ldots, \mathbf{\Sigma}(v_k)$\} and order them such that $t_{(1)} \leq t_{(2)} \leq \ldots t_{(k)}$.
  \State Set $\mathbf{u}_{t_{(1)}} = v_k, \mathbf{u}_{t_{(2)}} = v_{k-1}, \ldots, \mathbf{u}_{t_{(k)}} = v_1$.
  \State Add $v_1, \ldots, v_k$ to $\mathcal{M}$.
\EndFor
\end{algorithmic}
\end{algorithm}

\begin{proposition}
\label{prop:runtime}
For any labeled tree $\tilde{\mathbf{t}}_n$ with node labels $\mathcal{U}_n$, Algorithm~\ref{alg:draw-history} generates a history $\mathbf{u}_1, \mathbf{u}_2, \ldots, \mathbf{u}_n$ uniformly at random from $\text{hist}(\tilde{\mathbf{t}}_n)$. Moreover, Algorithm~\ref{alg:draw-history} has a worst-case runtime of $O(n \log \text{diam}(\tilde{\mathbf{t}}_n))$ where $\text{diam}(\tilde{\mathbf{t}}_n)$ is the diameter, i.e. the length of the longest path, of $\tilde{\mathbf{t}}_n$.
\end{proposition}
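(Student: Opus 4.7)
The claim has two parts: the output is uniform on $\text{hist}(\tilde{\mathbf{t}}_n)$, and the runtime is $O(n \log \text{diam}(\tilde{\mathbf{t}}_n))$. Since step~1 selects $\mathbf{u}_1 = u$ with probability $\#\text{hist}(\tilde{\mathbf{t}}_n,u)/\#\text{hist}(\tilde{\mathbf{t}}_n)$ via Algorithm~\ref{alg:count_history}, uniformity of the overall output reduces to the following claim, which I would prove by induction on the tree size: for any rooted tree $(\tilde{\mathbf{t}}_n,u)$, running steps~2--10 with $\mathbf{u}_1=u$ fixed and $\mathbf{\Sigma}$ drawn uniformly yields a uniform sample from $\text{hist}(\tilde{\mathbf{t}}_n,u)$.

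The inductive step rests on the following structural lemma: each chain $v_1,\ldots,v_k$ extracted in the for-loop lies entirely within a single subtree $\tilde{\mathbf{t}}_{c}^{(u)}$ rooted at a child $c$ of $u$. Indeed, a simple induction on the iteration count shows that $\mathcal{M}\cap V(\tilde{\mathbf{t}}_{c_i}^{(u)})$ is always either empty or a connected subtree of $\tilde{\mathbf{t}}_{c_i}^{(u)}$ containing $c_i$; since a chain walks upward from $v_1$ and halts upon hitting $\mathcal{M}$, it must terminate within the same subtree as its trigger. Letting $c_1,\ldots,c_L$ be the children of $u$ with subtree sizes $s_1,\ldots,s_L$ and setting $T_i := \mathbf{\Sigma}(V(\tilde{\mathbf{t}}_{c_i}^{(u)}))$, the sets $(T_1,\ldots,T_L)$ form a uniformly random ordered partition of $\{2,\ldots,n\}$ with block sizes $(s_1,\ldots,s_L)$, and conditional on this partition the restricted permutations are independent uniform bijections onto the $T_i$. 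Because chains never cross subtree boundaries and the algorithm's chain processing depends only on the relative order of $\mathbf{\Sigma}$-values, the algorithm's action on $\tilde{\mathbf{t}}_{c_i}^{(u)}$ is identical, after the order-preserving relabeling $T_i \to \{2,\ldots,s_i+1\}$, to running steps~2--10 on $\tilde{\mathbf{t}}_{c_i}^{(u)}$ with root $c_i$. By the inductive hypothesis, the within-subtree history is uniform on $\text{hist}(\tilde{\mathbf{t}}_{c_i}^{(u)},c_i)$; combining this with the uniform ordered partition and the recursive decomposition of $\#\text{hist}(\tilde{\mathbf{t}}_n,u)$ implied by Proposition~\ref{prop:count-rooted-history} assigns each history probability $1/\#\text{hist}(\tilde{\mathbf{t}}_n,u)$, completing the induction.

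For the runtime, step~1 costs $O(n)$ by Algorithm~\ref{alg:count_history} and step~2 generates $\mathbf{\Sigma}$ in $O(n)$. Each for-loop iteration with $v\in\mathcal{M}$ is $O(1)$; otherwise the chain traversal in step~6 and the sort in step~7 cost $O(k\log k)$ for chain length $k$. Every non-root node belongs to exactly one processed chain (each node is inserted into $\mathcal{M}$ at most once), so the chain lengths $k_i$ satisfy $\sum_i k_i = n-1$, and each $k_i \le \text{diam}(\tilde{\mathbf{t}}_n)$ because a chain is a path in $\tilde{\mathbf{t}}_n$. Hence $\sum_i k_i\log k_i \le (n-1)\log\text{diam}(\tilde{\mathbf{t}}_n)$, yielding total runtime $O(n\log\text{diam}(\tilde{\mathbf{t}}_n))$.

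The main obstacle is the structural lemma that chains never cross subtree boundaries, together with the observation that the algorithm's projection onto any single subtree coincides operationally with an independent execution of the algorithm on that subtree with its root fixed. Once this recursion is in place, the inductive counting via Proposition~\ref{prop:count-rooted-history} follows cleanly, and the runtime analysis reduces to a routine bound on $\sum_i k_i \log k_i$.
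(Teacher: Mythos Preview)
Your runtime argument is correct and essentially the paper's: the chain lengths sum to $n-1$, each chain has length at most $\text{diam}(\tilde{\mathbf{t}}_n)$, and sorting gives the $\sum k_i\log k_i$ bound.

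The correctness argument, however, has a genuine gap. The key reduction---``the algorithm's action on $\tilde{\mathbf{t}}_{c_i}^{(u)}$ is identical, after the order-preserving relabeling $T_i\to\{2,\ldots,s_i+1\}$, to running steps~2--10 on $\tilde{\mathbf{t}}_{c_i}^{(u)}$ with root $c_i$''---is false. In the original run $c_i\notin\mathcal{M}$ initially, so the \emph{first} chain triggered inside subtree $i$ climbs all the way up to $c_i$ and includes it; the positions assigned to that chain therefore use $\mathbf{\Sigma}(c_i)$. In your proposed sub-run $c_i$ is the root, sits in $\mathcal{M}$ from the outset, and is never part of any chain. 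Concretely, take the tree $u_1$--$a$, $a$--$b$, $a$--$c$ with $\mathbf{\Sigma}(a)=4$, $\mathbf{\Sigma}(b)=2$, $\mathbf{\Sigma}(c)=3$. The algorithm produces chain $(b,a)$ at $t=2$ with sorted values $\{2,4\}$, giving $\mathbf{u}_2=a$, $\mathbf{u}_4=b$, then $\mathbf{u}_3=c$; the within-subtree order is $a,c,b$. Your sub-run on $\{a,b,c\}$ rooted at $a$ (relabeling is the identity here) gives $\mathbf{u}_2=b$, $\mathbf{u}_3=c$, i.e.\ order $a,b,c$. So the inductive hypothesis cannot be applied.

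The paper's approach is different: rather than decomposing by subtrees, it verifies directly that the output sequence satisfies the forward-sampling transition probabilities of Proposition~\ref{prop:remaining-history-prob}, i.e.\ that $\mathbb{P}(\mathbf{u}_{k+1}=v\mid \mathbf{u}_1,\ldots,\mathbf{u}_k)=n_v^{(u_1)}/(n-k)$ for each neighbor $v$ of $\{u_1,\ldots,u_k\}$. (The paper's one-line justification---an ``iff'' between $\{\mathbf{u}_{k+1}=v\}$ and $\{\mathbf{\Sigma}^{-1}(k+1)\in\tilde{\mathbf{t}}_v^{(u_1)}\}$---is itself loose for exactly the reason your reduction fails, since $\mathbf{\Sigma}^{-1}(k+1)$ can land on a node already in $\{u_1,\ldots,u_k\}$; but the conditional-probability claim is correct and can be established with a more careful accounting.) If you want to salvage a structural proof, one clean route is to show directly that the map $\mathbf{\Sigma}\mapsto(\mathbf{u}_2,\ldots,\mathbf{u}_n)$ is exactly $\prod_{v\ne u_1} n_v^{(u_1)}$-to-one onto $\text{hist}(\tilde{\mathbf{t}}_n,u_1)$.
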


\begin{proof}
It is clear that the output $\mathbf{u}_1, \ldots, \mathbf{u}_n$ is a valid history. Suppose $\mathbf{u}_1 = u_1$ for some $u_1 \in \mathcal{U}_n$, then $\mathbf{u}_2 = v$ for some neighbor $v$ of $u_1$ if and only if $\mathbf{\Sigma}(v') = 2$ for some node $v'$ in the subtree $\tilde{\mathbf{t}}_v^{(u_1)}$ rooted at $v$. Since $\mathbf{\Sigma}$ is a random permutation, the event that $\mathbf{\Sigma}(v') = 2$ for some $v' \in V(\tilde{\mathbf{t}}_v^{(u_1)})$ is exactly $\frac{n^{(u_1)}_v}{n-1}$.

Now assume that we have generated the first $k$ elements of the history $\mathbf{u}_1 = u_1,\, \mathbf{u}_2 = u_2,\ldots, \mathbf{u}_k = u_k$. Again, the event that $\mathbf{u}_{k+1} = v$ for some neighboring node $v$ of $\{u_1, \ldots, u_k\}$ if and only if $\mathbf{\Sigma}(v') = k+1$ for some $v' \in V(\tilde{\mathbf{t}}_v^{(u_1)})$, which occurs with probability exactly $\frac{n^{(u_1)}_v}{n-k+1}$. By Proposition~\ref{prop:remaining-history-prob}, it thus holds that $(\mathbf{u}_1, \mathbf{u}_2, \ldots, \mathbf{u}_{n-1})$ is uniform sample from $\text{hist}(\tilde{\mathbf{t}}_n)$

To prove the runtime of Algorithm~\ref{alg:draw-history}, we note that $\mathbf{\Sigma}$ may be generated in $O(n)$ time, for example by the Knuth-Fisher-Yates shuffle algorithm~\citep{fisher1943statistical}. For each node $v$, we let $k_v$ denote the number of ancestor nodes, including $v$ itself, not in the set $\mathcal{M}$ when the algorithm is at iteration $t = \mathbf{\Sigma}(v)$. It then holds that $\sum_{v \in \mathcal{U}_n \setminus \{u\}} k_v = n$ since each node is placed in the set $\mathcal{M}$ as soon as it is visited by the algorithm. Thus,
\[
\sum_{v \in \mathcal{U}_n \setminus \{u\}} k_v \log(k_v) \leq  \sum_{v \in \mathcal{U}_n \setminus \{u\}} k_v \log(\text{diam}(\tilde{\mathbf{t}}_n)) \leq n \log(\text{diam}(\tilde{\mathbf{t}}_n)),
\]
which proves the proposition. 
\end{proof}

\subsubsection{Backward sampling}

We may also generate a history $\tilde{\mathbf{T}}_1^{(m)},\ldots, \tilde{\mathbf{T}}_{n-1}^{(m)}$, equivalently represented as a sequence of nodes $u_1, u_2, \ldots, u_n$, from the conditional distribution $\mathbb{P}( \tilde{\mathbf{T}}_1, \ldots, \tilde{\mathbf{T}}_{n-1} \,|\, \tilde{\mathbf{T}}_n = \tilde{\mathbf{t}}_n)$ by iterative removing one leaf a time.

If $\mathbf{T}_n$ is shape exchangeable, then we have by Proposition~\ref{prop:uniform-conditional} that for any labeled tree $\tilde{\mathbf{t}}_n$ and any leaf node $u$ that
\begin{align}
\mathbb{P}( \tilde{\mathbf{T}}_{n-1} = \tilde{\mathbf{t}}_n \setminus \{u\} \,|\, \tilde{\mathbf{T}}_{n} = \tilde{\mathbf{t}}_n ) 
 = \frac{\# \text{hist}( \tilde{\mathbf{t}}_n - \{u\} )}{ \# \text{hist}(\tilde{\mathbf{t}}_n )}. \label{eqn:remove-leaf-prob}
\end{align}

Thus, given an observed shape $\mathbf{s}_n$ and with an alphabetically labeled representation $\tilde{\mathbf{t}}_n$, we may reconstruct a history leading to that shape by recursively removing leaves according to~\eqref{eqn:remove-leaf-prob}. Naive implementation of backward sampling has a runtime of $O(n^3)$ but, by observing that 
\begin{align*}
&\mathbb{P}( \tilde{\mathbf{T}}_{n-1} = \tilde{\mathbf{t}}_n \backslash \{u \} | \tilde{\mathbf{T}}_{n} = \tilde{\mathbf{t}}_n ) \\
&\qquad \qquad = \sum_{v \in \mathcal{U}_n} \mathbb{P}( \tilde{\mathbf{T}}_{n-1} = \tilde{\mathbf{t}}_n \backslash \{u \} | \tilde{\mathbf{T}}_{n} = \tilde{\mathbf{t}}_n, \tilde{\mathbf{T}}_1 = \{v\} )  \mathbb{P}( \tilde{\mathbf{T}}_1 = \{v\}| \tilde{\mathbf{T}}_{n} = \tilde{\mathbf{t}}_n) \\
&\qquad \qquad = \sum_{v \in \mathcal{U}_n} \frac{ \# \text{hist}( \tilde{\mathbf{t}}_n - \{ u \}, v)}{\# \text{hist}(\tilde{\mathbf{t}}_n, v)} \frac{ \# \text{hist}(\tilde{\mathbf{t}}_n, v)}{ \text{hist}(\tilde{\mathbf{t}}_n) },
\end{align*}
one may first draw a random node $v$ from the distribution $\mathbb{P}( \tilde{\mathbf{T}}_1 = \cdot \,|\, \tilde{\mathbf{T}}_{n} = \tilde{\mathbf{t}}_n)$ and then remove a random leaf node $u$ drawn with probability $\frac{ \# \text{hist}( \tilde{\mathbf{t}}_n - \{ u \}, v)}{\# \text{hist}(\tilde{\mathbf{t}}_n, v)}$. The quantities $\# \text{hist}( \tilde{\mathbf{t}}_n - \{ u \}, v)$ for all leaf node $u$ can be computed jointly in $O(n \cdot \text{diam}(\tilde{\mathbf{t}}_n))$ time by precomputing the size of the subtrees. The overall runtime of backward sampling is therefore $O(n^2 \text{diam}(\tilde{\mathbf{t}}_n))$, which is typically $O(n^2 \log n)$ for uniform attachment or linear preferential attachment random trees (c.f. comment right before Proposition~\ref{prop:runtime}).

In general, it may be possible to stop backward sampling before generating the entire tree history. For example, for determining the event of which of two nodes was infected first, the event is determined as soon as one of these nodes is removed during the reversed process. 

\subsection{Importance sampling for general tree models}
\label{sec:importance_sampling}

If $\mathbf{T}_n$ is not shape exchangeable, then the conditional distribution~\eqref{eq:conditional-history-prob} is possibly intractable to directly compute or sample. In the case where the process is not shape exchangeable but where the model is known and the underlying probabilities are straightforward to compute, we propose an importance sampling approach where we use the shape exchangeable conditional distribution as the proposal distribution. More precisely, for a given observed shape $\textbf{s}_n$ with a labeled representation $\tilde{\textbf{t}}_n$, we generate samples of the history $\tilde{\mathbf{T}}_1^{(m)}, \ldots, \tilde{\mathbf{T}}_{n-1}^{(m)}$ for $m=1,\ldots, M$ uniformly from $\text{hist}(\tilde{\mathbf{t}}_n)$ and associate each sample with an importance weight.

For a history $\tilde{\mathbf{t}}_1, \tilde{\mathbf{t}}_2, \dots, \tilde{\mathbf{t}}_{n-1}$ of a labeled tree $\tilde{\mathbf{t}}_n$, we write $\mathbb{P}_{\text{unif}}(\tilde{\mathbf{T}}_1 = \tilde{\mathbf{t}}_1, \ldots, \tilde{\mathbf{T}}_{n-1} = \tilde{\mathbf{t}}_{n-1} \,|\, \tilde{\mathbf{T}}_n = \tilde{\mathbf{t}}_n) = \frac{1}{\#\text{hist}(\tilde{\mathbf{t}}_n)}$ as uniform distribution over $\text{hist}(\tilde{\mathbf{t}}_n)$ and $\mathbb{P}(\tilde{\mathbf{T}}_1 = \tilde{\mathbf{t}}_1, \ldots, \tilde{\mathbf{T}}_{n-1} = \tilde{\mathbf{t}}_{n-1} \,|\, \tilde{\mathbf{T}}_n = \tilde{\mathbf{t}}_n)$ as the actual probability. The importance weight for a single history $\tilde{\mathbf{t}}_1, \ldots, \tilde{\mathbf{t}}_{n-1}$ is defined as the ratio of the actual probability over the proposal probability. In our setting, we observe that
\begin{align}
w(\tilde{\mathbf{t}}_1, \ldots, \tilde{\mathbf{t}}_{n-1}) &= \frac{\mathbb{P}(\tilde{\mathbf{T}}_1 = \tilde{\mathbf{t}}_1, \ldots, \tilde{\mathbf{T}}_{n-1} = \tilde{\mathbf{t}}_{n-1} \,|\, \tilde{\mathbf{T}}_n = \tilde{\mathbf{t}}_n)}
{\mathbb{P}_{\text{unif}}(\tilde{\mathbf{T}}_1 = \tilde{\mathbf{t}}_1, \ldots, \tilde{\mathbf{T}}_{n-1} = \tilde{\mathbf{t}}_{n-1} \,|\, \tilde{\mathbf{T}}_n = \tilde{\mathbf{t}}_n)} \nonumber
\\
&= %%%
\mathbb{P}(\tilde{\mathbf{T}}_1 = \tilde{\mathbf{t}}_1, \ldots, \tilde{\mathbf{T}}_{n-1} = \tilde{\mathbf{t}}_{n-1}, \tilde{\mathbf{T}}_n = \tilde{\mathbf{t}}_n) \frac{\#\text{hist}(\tilde{\mathbf{t}}_n)}{\mathbb{P}(\tilde{\mathbf{T}}_n = \tilde{\mathbf{t}}_n)} \label{eqn:importance-weight} \\
&\propto \mathbb{P}(\tilde{\mathbf{T}}_1 = \tilde{\mathbf{t}}_1, \ldots, \tilde{\mathbf{T}}_{n-1} = \tilde{\mathbf{t}}_{n-1}, \tilde{\mathbf{T}}_n = \tilde{\mathbf{t}}_n), \nonumber
\end{align}
where the last line follows because the $\#\text{hist}(\tilde{\mathbf{t}}_n) / \mathbb{P}( \tilde{\mathbf{T}}_n = \tilde{\mathbf{t}}_n )$ term of~\eqref{eqn:importance-weight} depends only on the final tree $\tilde{\mathbf{t}}_n$ and not on the history $\tilde{\mathbf{t}}_1, \ldots, \tilde{\mathbf{t}}_{n-1}$. Since the importance weights only need to be specified up to a multiplicative constant, we may compute the actual probability $\mathbb{P}(\tilde{\mathbf{T}}_1 = \tilde{\mathbf{t}}_1, \ldots, \tilde{\mathbf{T}}_{n-1} = \tilde{\mathbf{t}}_{n-1}, \tilde{\mathbf{T}}_n = \tilde{\mathbf{t}}_n)$ as the weights. 

To give a concrete example of importance sampling, we consider the problem of computing the conditional root probability $\mathbb{P}(\tilde{\mathbf{T}}_1 = \{v\} \,|\, \tilde{\mathbf{T}} = \tilde{\mathbf{t}}_n)$ for a node $v \in \mathcal{U}_n$ when $\mathbf{T}_n$ is not shape exchangeable. Our first step is to generate $M$ samples of histories $\{\tilde{\mathbf{t}}_1^{(m)}, \ldots, \tilde{\mathbf{t}}_{n-1}^{(m)} \}$ uniformly from $\text{hist}(\tilde{\mathbf{t}}_n)$ using Algorithm~\ref{alg:draw-history} and compute, for each sample, the importance weight 
\begin{align}
w_m := \mathbb{P}(\tilde{\mathbf{T}}_1 = \tilde{\mathbf{t}}^{(m)}_1, \ldots, \tilde{\mathbf{T}}_{n-1} = \tilde{\mathbf{t}}^{(m)}_{n-1}, \tilde{\mathbf{T}}_n = \tilde{\mathbf{t}}_n). \label{eq:importance-weight2}
\end{align}
We may then take the Monte Carlo approximation of the conditional root probability as
\[
\frac{1}{\sum_m w_m} \sum_{m=1}^M w_m \mathbbm{1}\{ \tilde{\mathbf{t}}_1^{(m)} = v \}.
\]

Since any conditional distribution $\mathbb{P}(\tilde{\mathbf{T}}_1, \ldots, \tilde{\mathbf{T}}_{n-1} \,|\, \tilde{\mathbf{T}}_n = \tilde{\mathbf{t}}_n)$ is supported on $\text{hist}(\tilde{\mathbf{t}}_n)$ and thus dominated by the uniform distribution, we immediately obtain by the law of large numbers that the importance sampling protocol converges to the true conditional probability as the number of samples $M$ goes to infinity.
\begin{proposition}
Let $\mathbf{T}_n$ be any random recursive tree and let $\tilde{\mathbf{T}}_n$ be the corresponding label-randomized tree. For any labeled tree $\tilde{\mathbf{t}}_n$ with $V(\tilde{\mathbf{t}}_n) = \mathcal{U}_n$, for any event $E \subset \text{hist}(\tilde{\mathbf{t}}_n)$, we have that
\[
\mathbb{P}\bigl( \tilde{\mathbf{T}}_1, \ldots, \tilde{\mathbf{T}}_{n-1} \in E \,|\, \tilde{\mathbf{T}} = \tilde{\mathbf{t}}_n \bigr) = \lim_{M \rightarrow \infty} \frac{1}{\sum_m w_m} \sum_{m=1}^M w_m \mathbbm{1}\{ \tilde{\mathbf{T}}_1^{(m)}, \ldots, \tilde{\mathbf{T}}^{(m)}_{n-1} \in E \},
\]
where $w_m$ is defined as in~\eqref{eq:importance-weight2}.
\end{proposition}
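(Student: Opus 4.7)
The plan is to prove this as a textbook consistency result for self-normalized importance sampling in a finite setting. Since the proposal draws each $(\tilde{\mathbf{T}}_1^{(m)}, \ldots, \tilde{\mathbf{T}}_{n-1}^{(m)})$ i.i.d.\ uniformly from the finite set $\text{hist}(\tilde{\mathbf{t}}_n)$, all weights $w_m$ are bounded (they are deterministic functions of the sampled history and the fixed tree $\tilde{\mathbf{t}}_n$), so the strong law of large numbers applies separately to the numerator and denominator of the ratio estimator.

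First I would compute the expectation of a single weight. Writing $h$ for a generic element of $\text{hist}(\tilde{\mathbf{t}}_n)$ and using that the events $\{(\tilde{\mathbf{T}}_1, \ldots, \tilde{\mathbf{T}}_{n-1}) = h,\, \tilde{\mathbf{T}}_n = \tilde{\mathbf{t}}_n\}$, as $h$ ranges over $\text{hist}(\tilde{\mathbf{t}}_n)$, form a disjoint decomposition of $\{\tilde{\mathbf{T}}_n = \tilde{\mathbf{t}}_n\}$,
\[
\mathbb{E}[w_1] \;=\; \sum_{h \in \text{hist}(\tilde{\mathbf{t}}_n)} \frac{1}{\#\text{hist}(\tilde{\mathbf{t}}_n)}\, \mathbb{P}\bigl((\tilde{\mathbf{T}}_1,\ldots,\tilde{\mathbf{T}}_{n-1}) = h,\, \tilde{\mathbf{T}}_n = \tilde{\mathbf{t}}_n\bigr) \;=\; \frac{\mathbb{P}(\tilde{\mathbf{T}}_n = \tilde{\mathbf{t}}_n)}{\#\text{hist}(\tilde{\mathbf{t}}_n)}.
\]
Restricting the sum to $h \in E$ gives analogously
\[
\mathbb{E}\bigl[ w_1 \mathbbm{1}\{(\tilde{\mathbf{T}}_1^{(1)}, \ldots, \tilde{\mathbf{T}}_{n-1}^{(1)}) \in E\}\bigr] \;=\; \frac{\mathbb{P}\bigl((\tilde{\mathbf{T}}_1, \ldots, \tilde{\mathbf{T}}_{n-1}) \in E,\; \tilde{\mathbf{T}}_n = \tilde{\mathbf{t}}_n\bigr)}{\#\text{hist}(\tilde{\mathbf{t}}_n)}.
\]

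Second, the strong law of large numbers yields almost-sure convergence of $\frac{1}{M}\sum_m w_m$ and $\frac{1}{M}\sum_m w_m \mathbbm{1}\{(\tilde{\mathbf{T}}_1^{(m)},\ldots,\tilde{\mathbf{T}}_{n-1}^{(m)}) \in E\}$ to the expectations above. The factor $1/\#\text{hist}(\tilde{\mathbf{t}}_n)$ cancels when taking the ratio, so by the algebra of almost-sure limits the right-hand side of the claim converges a.s.\ to
\[
\frac{\mathbb{P}\bigl((\tilde{\mathbf{T}}_1,\ldots,\tilde{\mathbf{T}}_{n-1}) \in E,\; \tilde{\mathbf{T}}_n = \tilde{\mathbf{t}}_n\bigr)}{\mathbb{P}(\tilde{\mathbf{T}}_n = \tilde{\mathbf{t}}_n)} \;=\; \mathbb{P}\bigl((\tilde{\mathbf{T}}_1,\ldots,\tilde{\mathbf{T}}_{n-1}) \in E \,\big|\, \tilde{\mathbf{T}}_n = \tilde{\mathbf{t}}_n\bigr),
\]
which is the desired identity.

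There is no substantive obstacle; the argument is the standard consistency proof for self-normalized importance sampling specialized to a discrete setting. The only caveat is that one needs $\mathbb{P}(\tilde{\mathbf{T}}_n = \tilde{\mathbf{t}}_n) > 0$, so that the denominator limit is strictly positive and the conditional probability on the left-hand side is well-defined; this is implicit since $\tilde{\mathbf{t}}_n$ is the observed tree.
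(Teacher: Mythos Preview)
Your proof is correct and is essentially the same approach as the paper: the paper does not give a detailed argument but simply observes that the target conditional distribution is supported on $\text{hist}(\tilde{\mathbf{t}}_n)$ and hence dominated by the uniform proposal, so convergence follows immediately from the law of large numbers. Your write-up is a fleshed-out version of that one-line justification, making explicit the expectations of $w_1$ and $w_1\mathbbm{1}_E$ and the cancellation of $1/\#\text{hist}(\tilde{\mathbf{t}}_n)$ in the self-normalized ratio.
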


\begin{remark}
After completion of this manuscript, some recent independent work on sampling algorithms for random tree histories was brought to the authors' attention.   \citet{cantwell2019recovering} have independently observed the same sampling probability as in Proposition \ref{prop:remaining-history-prob} for the forward sampling algorithm and \citet{young2019phase} have independently proposed a similar importance sampling scheme which is also adaptive. 
\end{remark}

\section{Experiments}\label{sec:experiments}

\subsection{Simulation studies}
\label{sec:simulation}
As a simple illustration our root inference procedure, we show in Figure~\ref{fig:bigpa} a tree of 1000 nodes generated from the linear preferential attachment model. We construct the 95\% confidence set, comprising of around 30 nodes colored green and cyan as well as the 85\% confidence set, comprising of 6 nodes colored green. The true root node is colored yellow and is captured in the 95\% confidence set. In Figure~\ref{fig:bigua}, we show the same except that the tree is generated from the uniform attachment model.  

\begin{figure}
    \begin{center}
    \begin{subfigure}{0.45\textwidth}
    \includegraphics[scale=.64, trim=150 60 0 80 ]{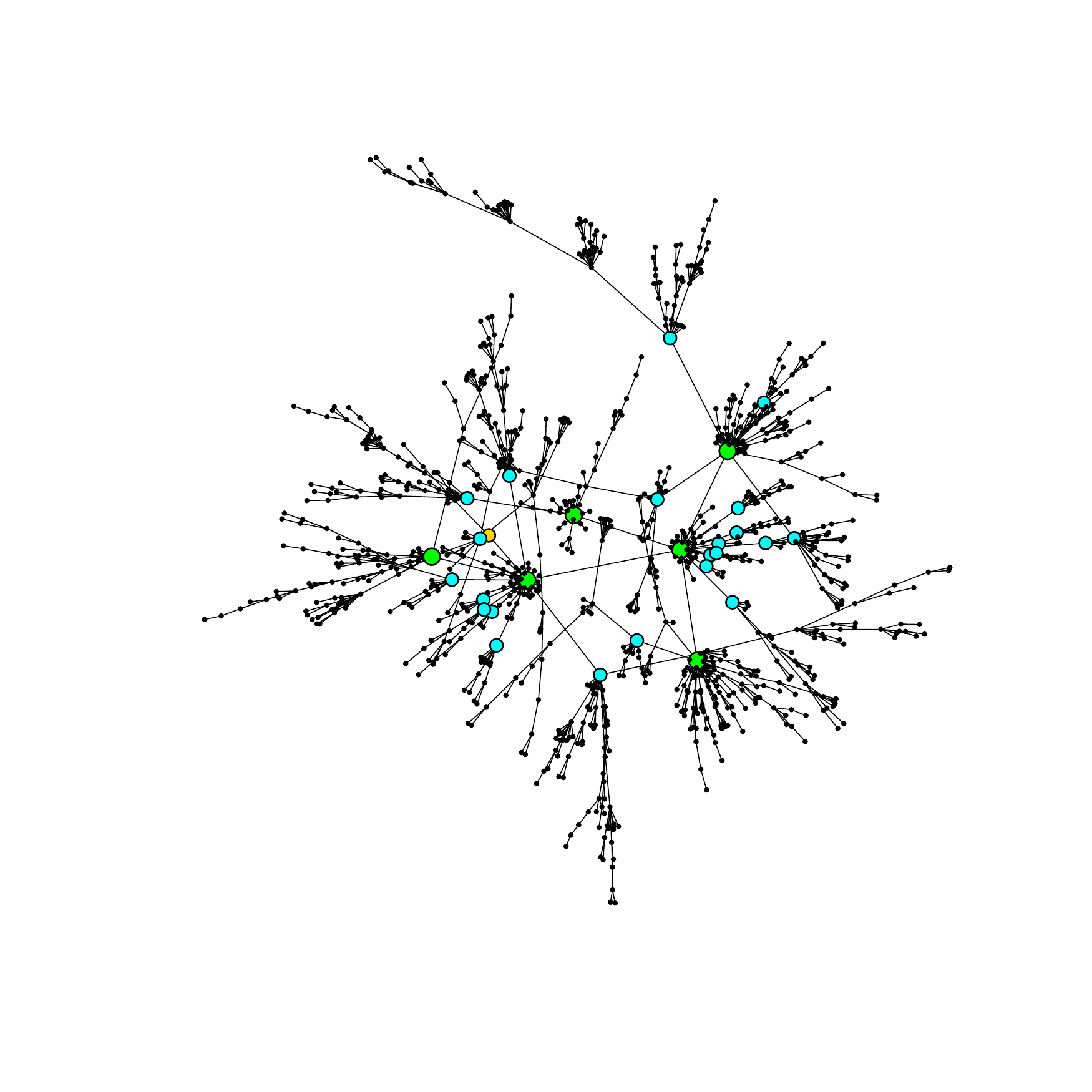}
    \caption{Root inference on linear preferential attachment tree; green nodes form 85\% confidence set; green and cyan nodes form 95\% confidence set. }
    \label{fig:bigpa}
    \end{subfigure}
    \begin{subfigure}{0.45\textwidth}
    \includegraphics[scale=.64, trim=80 60 0 80]{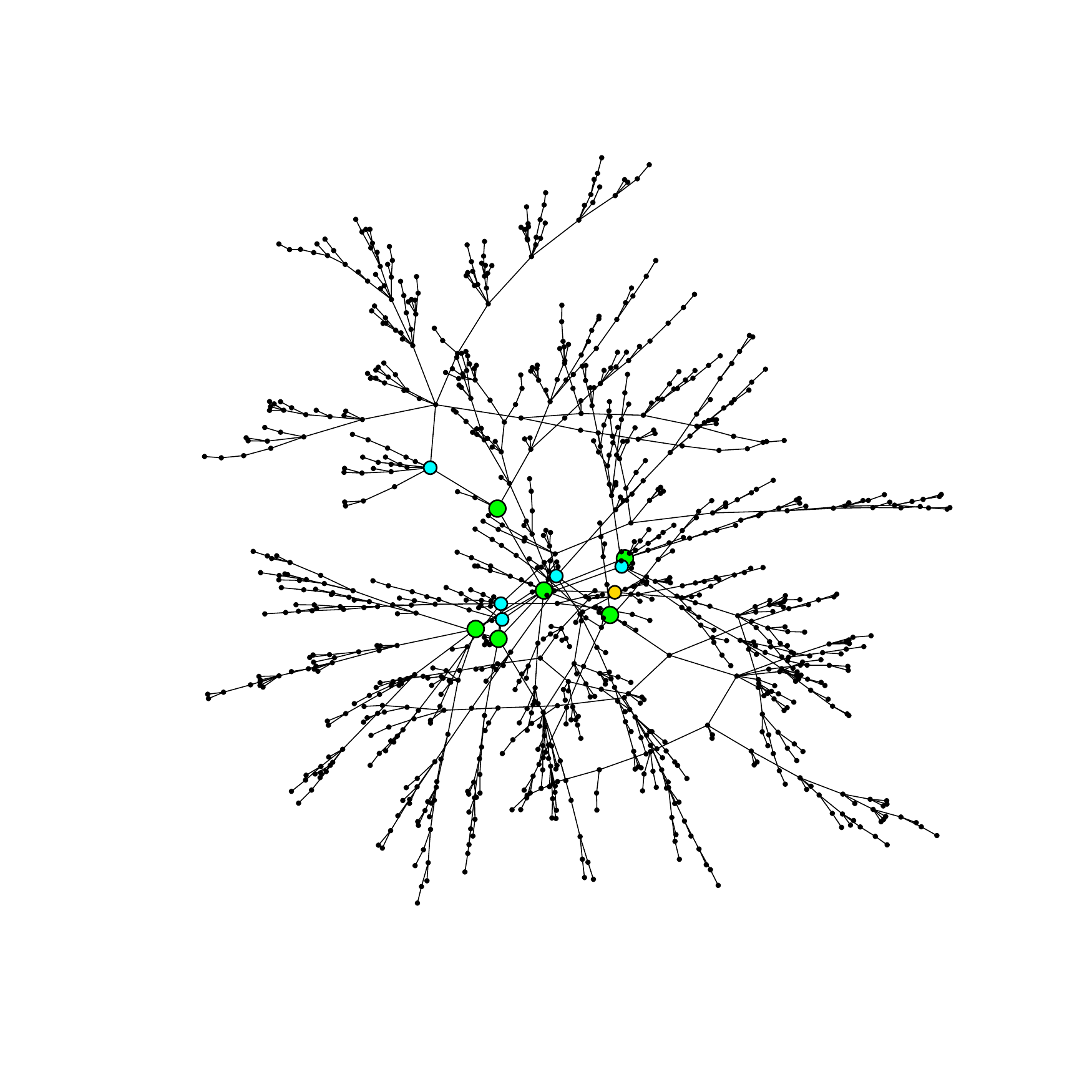}
    \caption{Root inference on uniform attachment tree; green nodes form 85\% confidence set; green and cyan nodes form 95\% confidence set.}
    \label{fig:bigua}
    \end{subfigure}
    \end{center}
    \caption{}
\end{figure}

In our first set of simulations studies, we verify that our confidence set for the root node has the frequentist coverage predicted by the theory. We generate trees from the general preferential attachment model $\text{PA}_\phi$ where we take $\phi(d) = d, 1, 8+d, 8-d$ which correspond to linear preferential attachment, uniform attachment, affine preferential attachment, and uniform on $8$-regular tree models respectively. We generate 200 independent trees and then calculate our confidence sets and report the percentage of the trials where our confidence set captures the true root node. We summarize the results in Table~\ref{tab:coverage}. Our findings are in full agreement with our theory, showing that we indeed attain valid coverage.

\begin{table}
\begin{center}
\begin{tabular}{c|c|c|c|c|c|c|c|c|}
\hline
\text{$\phi(d) = $} & d & 1 & 8 + d & 8 - d& d & 1 & d & 1 \\
\hline
\text{Theoretical coverage} & 0.95 & 0.95 & 0.95 & 0.95 & 0.9 & 0.9 & 0.99 & 0.99 \\
\hline
{\bf \text{Empirical coverage}} & {\bf 0.955} & {\bf 0.955} & {\bf 0.95} & {\bf 0.935} & {\bf 0.895} & {\bf 0.895} & {\bf 1} & {\bf 1} \\
\hline
\end{tabular}
\end{center}
\caption{Empirical coverage of our confidence set for the root node. We report the average over 200 trials. Tree size is $10,000$ in all cases.}
\label{tab:coverage}
\end{table}

In our second set of simulation studies, we analyze the size of our confidence sets for the root node. First, we generate trees from the linear preferential attachment model where we vary the tree size from $n=5,000$ to $n=100,000$.  We then compute the $95\%$ confidence sets and report the average size of the confidence set as well as the standard deviation from 200 independent trials. We summarize the result in Table~\ref{tab:conf_size1}. We observe that, in accordance with Corollary~\ref{cor:confidence_set_size}, the size of our confidence sets does not increase with the size of the tree. 

Next, we perform the same experiment on linear preferential attachment trees except that we hold the tree size constant at $n=10,000$ and instead vary the size from the confidence level from $0.90$ to $0.95$ to $0.99$. We report the average size of the confidence set as well as the standard deviation from 200 independent trials. We summarize the results in Table~\ref{tab:conf_size2}. To compare with these results, we also compute the size of the confidence set given by the bound $C \log(1/\epsilon)^2/\epsilon^4$ from \citet[][Theorem~6]{bubeck2017finding}. The constant $C$ arises from complicated approximations. We use $C=0.23$ as a conservative lower bound and justify this bound in Section~\ref{sec:constant-justification} in the appendix. We find that the bound, though theoretically beautiful, yield confidence sets that are far too conservative to be useful. 

We then analyze the size of the confidence sets under the uniform attachment model. We use the same setting where we let the tree size be $n=10,000$ and summarize the results in Table~\ref{tab:conf_size3}. We observe that under linear preferential attachment model, the size of the confidence set increases much more with the confidence level than under the uniform attachment model. This is in accordance with Corollary~\ref{cor:confidence_set_size} and the theoretical analysis of  \cite{bubeck2017finding}. We also compare the size of our confidence sets with the bound of $2.5 \log(1/\epsilon)/\epsilon$ that arises from~\citet[][Theorem~4]{bubeck2017finding}. We note that \cite{bubeck2017finding} also gives a bound of $a \exp\bigl( \frac{b \log(1/\epsilon)}{\log \log 1/\epsilon} \bigr)$ but this is far too large for any conservative values of $a,b$. 

\begin{table}[htp]
    \centering
    \begin{tabular}{c|c|c|c|c|}
    \hline
    \text{Number of nodes} & 5,000 & 10,000 & 20,000 & 100,000  \\
    \hline
   \textbf{Size of confidence set} &  {\bf 31.31} $\pm$  11.55 &  {\bf 34.23} $\pm$ 13.6 &  {\bf 35.85} $\pm$ 15 &  {\bf 36.68} $\pm$ 12.5 \\
    \hline
    \end{tabular}
    \caption{Size of the $95\%$ confidence set under linear preferential attachment model. We report the mean and standard deviation over 200 trials.}
    \label{tab:conf_size1}
\end{table}

\begin{table}[htp]
    \centering
    \begin{tabular}{c|c|c|c|}
    \hline
    \text{Confidence level} & 0.90 & 0.95 & 0.99  \\
    \hline
    \text{Bound in BDL (2017)} 
    & 12,194 & 330,258 & $\geq$ 48 million \\
    \hline
   {\bf \text{Size of confidence set}} &  {\bf 13.98} $\pm$ 5.1 &  {\bf 34.23} $\pm$ 13.6 &  {\bf 193.8} $\pm$ 60.8 \\
    \hline
    \end{tabular}
    \caption{Size of the confidence set under linear preferential attachment model for a tree of $n=10,000$ nodes. We also give the best bound on size from \citet[][Theorem~6]{bubeck2017finding} of $C \log^2(1/\epsilon)/\epsilon^4$ (letting $C=0.23$ as a conservative bound, see Section~\ref{sec:constant-justification}) for comparison purpose. }
    \label{tab:conf_size2}
\end{table}

\begin{table}[htp]
    \centering
    \begin{tabular}{c|c|c|c|}
    \hline
    \text{Confidence level} & 0.90 & 0.95 & 0.99  \\
    \hline
    \text{Bound in BDL (2017)} & 57 & 150 & 1,151 \\
    \hline
   {\bf \text{Size of confidence set}} &  {\bf 7.6} $\pm$ 0.68 &  {\bf 12.43} $\pm$ 1.2 &  {\bf 29.65} $\pm$ 2.7 \\
    \hline
    \end{tabular}
    \caption{Size of the confidence set under uniform attachment model for a tree of $n=10,000$ nodes. We also give the best bound on size from \citet[][Theorem~3]{bubeck2017finding} of $2.5 \log(1/\epsilon)/\epsilon$ for comparison purpose.}
    \label{tab:conf_size3}
\end{table}

Next, we illustrate Algorithm~\ref{alg:draw-history} for sampling from the uniform distribution on the set of histories of a labeled tree. We generate a single tree of 300 nodes from the linear preferential attachment model, shown in Figure~\ref{fig:arrival-tree}. We select three nodes, colored red, blue, and green and we draw 500 samples from the conditional distribution of the history to infer the conditional distribution of arrival times of these three nodes. The true arrival time of the red node is 3, of the blue node is 50, and of the green node is 200; the true root node is shown in yellow. The inferred conditional distribution of arrival times is shown in Figure~\ref{fig:arrival-time}. We observe that the conditional distribution of the arrival times reflect the "centrality" of these three high-lighted nodes. 

\begin{figure}
    \begin{center}
    \begin{subfigure}{0.45\textwidth}
    \includegraphics[scale=.6, trim=140 80 0 80]{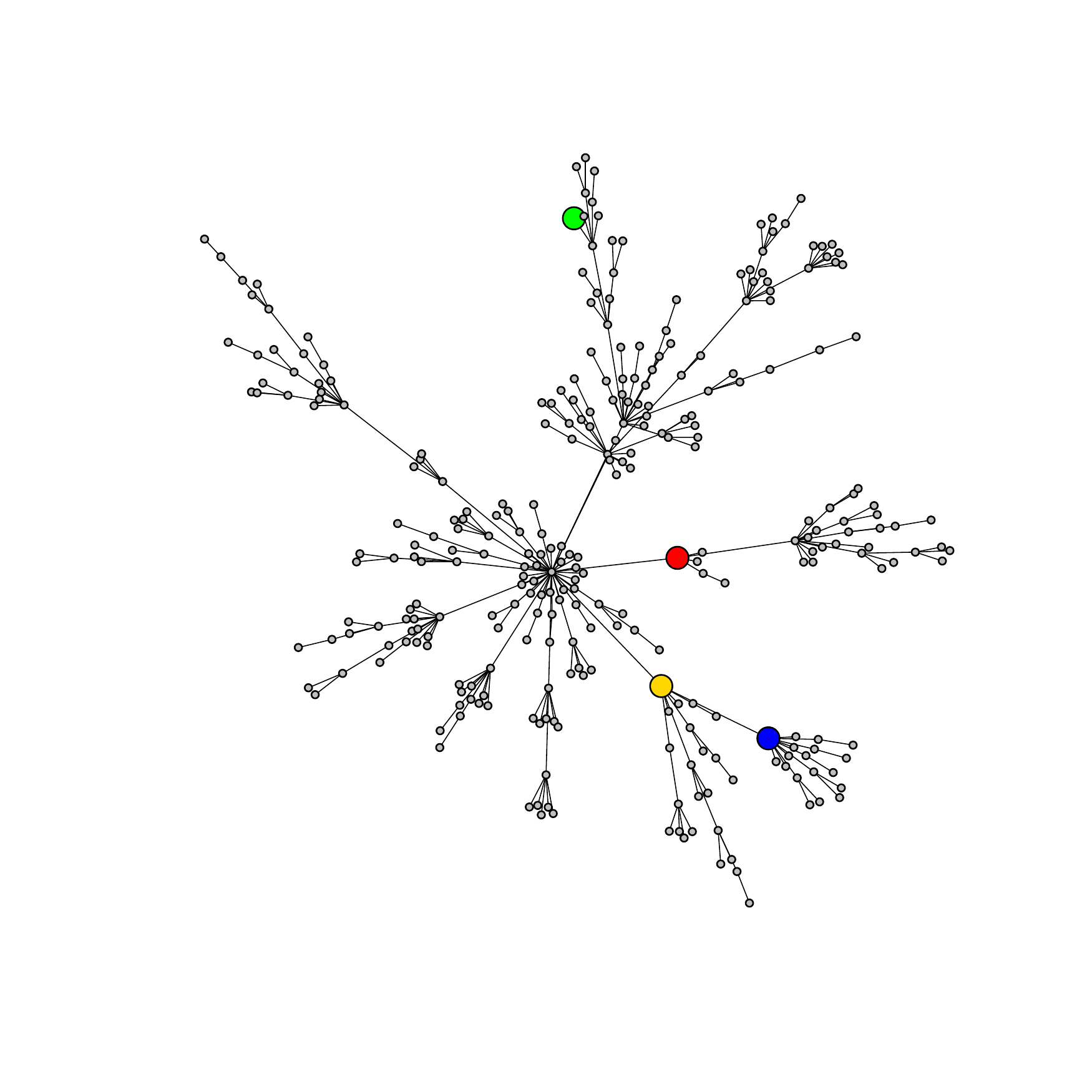}
    \caption{Linear PA Tree of 300 nodes.}
    \label{fig:arrival-tree}
    \end{subfigure}
    \begin{subfigure}{0.45\textwidth}
    \includegraphics[scale=.4, trim=0 0 100 0]{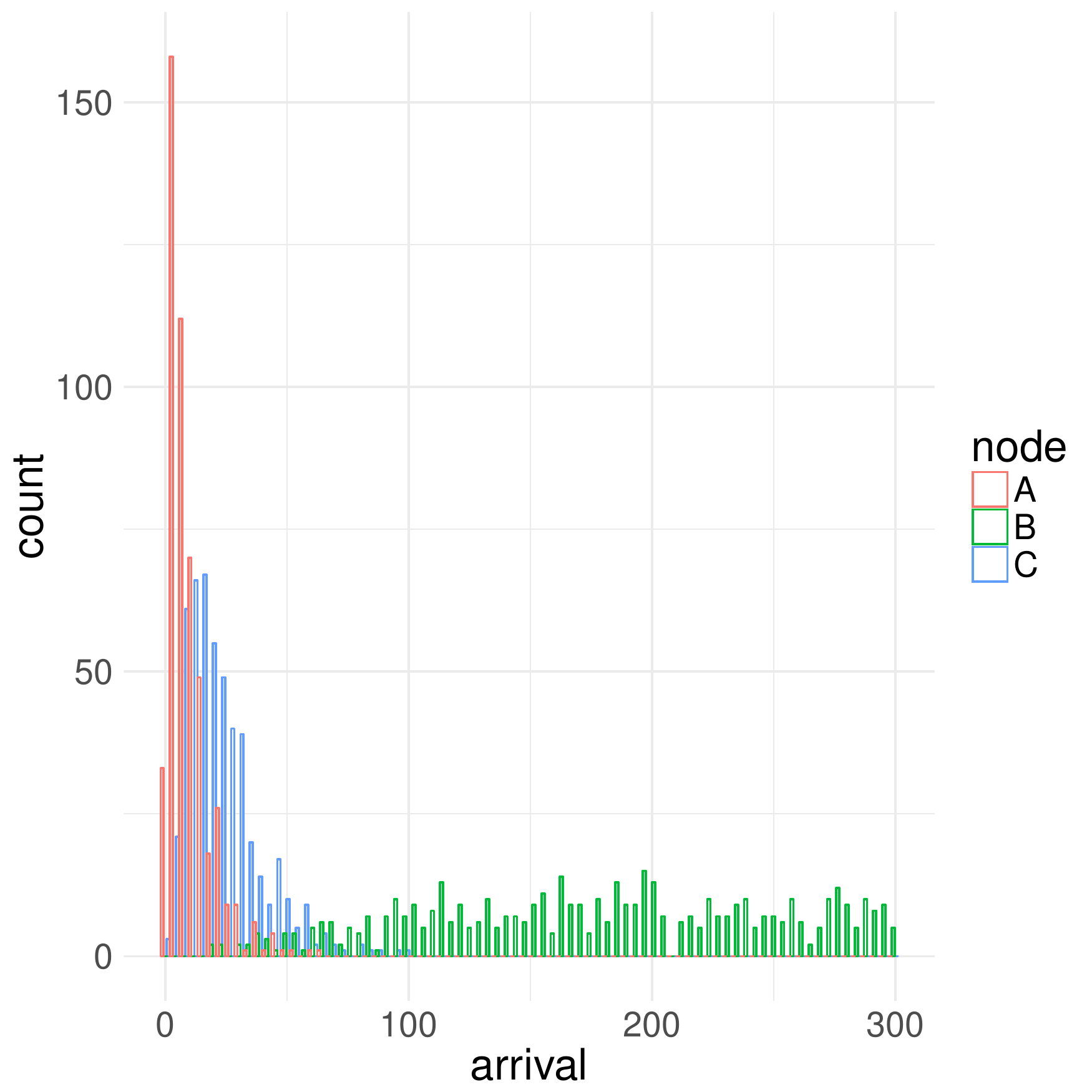}
    \caption{Conditional distribution of the arrival times of the three colored nodes.}
    \label{fig:arrival-time}
    \end{subfigure}
    \end{center}
    \caption{}
\end{figure}

\subsection{Flu data}
\label{sec:flu}
In this section, we run our method on a flu transmission network from \cite{hens2012robust}. The data set originates from an A(H1N1)v flu outbreak in a London school in April 2009. The patient-zero was a student who returned from travel abroad. After the outbreak, researchers used contact tracing to reconstruct a network of inter-personal contacts between 33 pupils in the same class as patient-zero, depicted in Figure~\ref{fig:flu-net} where patient-zero is colored yellow. Using knowledge of the true patient-zero, times of symptom onset among all the infected students, and epidemiological models, \cite{hens2012robust} reconstructed a plausible infection tree, which is shown in Figure~\ref{fig:flu-tree}. 

We first consider the plausible infection tree reconstructed by \cite{hens2012robust} and see if we can determine the patient-zero from only the connectivity structure of the tree alone. We assume that the observed tree is shape exchangeable and apply the root inference procedure described in Section~\ref{sec:root-inference}. We construct the 95\% confidence set, which comprises the group of 10 nodes colored green (and patient-zero colored yellow), as well as the 85\% confidence set, which comprises of 4 nodes with the conditional root probability labels in red. The true patient-zero, colored yellow, is the node with the third highest conditional root probability and it is captured by both confidence sets. 

Next, we study the contact network in Figure~\ref{fig:flu-net}. The network is highly non-tree-like and so we first reduce it to the tree case by generating a random spanning tree where we generate a random Gaussian weight on each edge and then take the minimum spanning tree via Kruskal's algorithm (we note that this is not the uniform random spanning tree). We then apply our root inference procedure on the random spanning tree and compute the 95\% and the 85\% confidence set. We repeat this procedure 200 times (with 200 independent random spanning trees) and report the average sizes of the confidence sets as well as the coverage in Table~\ref{tab:flu}. We observe that although the random spanning trees are not necessary shape exchangeable, our root inference procedure is still able to provide useful output. We believe that we can use the same approach to perform history inference on a randomly growing network, not necessarily a tree; we defer a detailed study of this approach to future work. 

\begin{figure}
    \begin{center}
    \begin{subfigure}{0.45\textwidth}
    \includegraphics[scale=.5, trim=120 60 100 100]{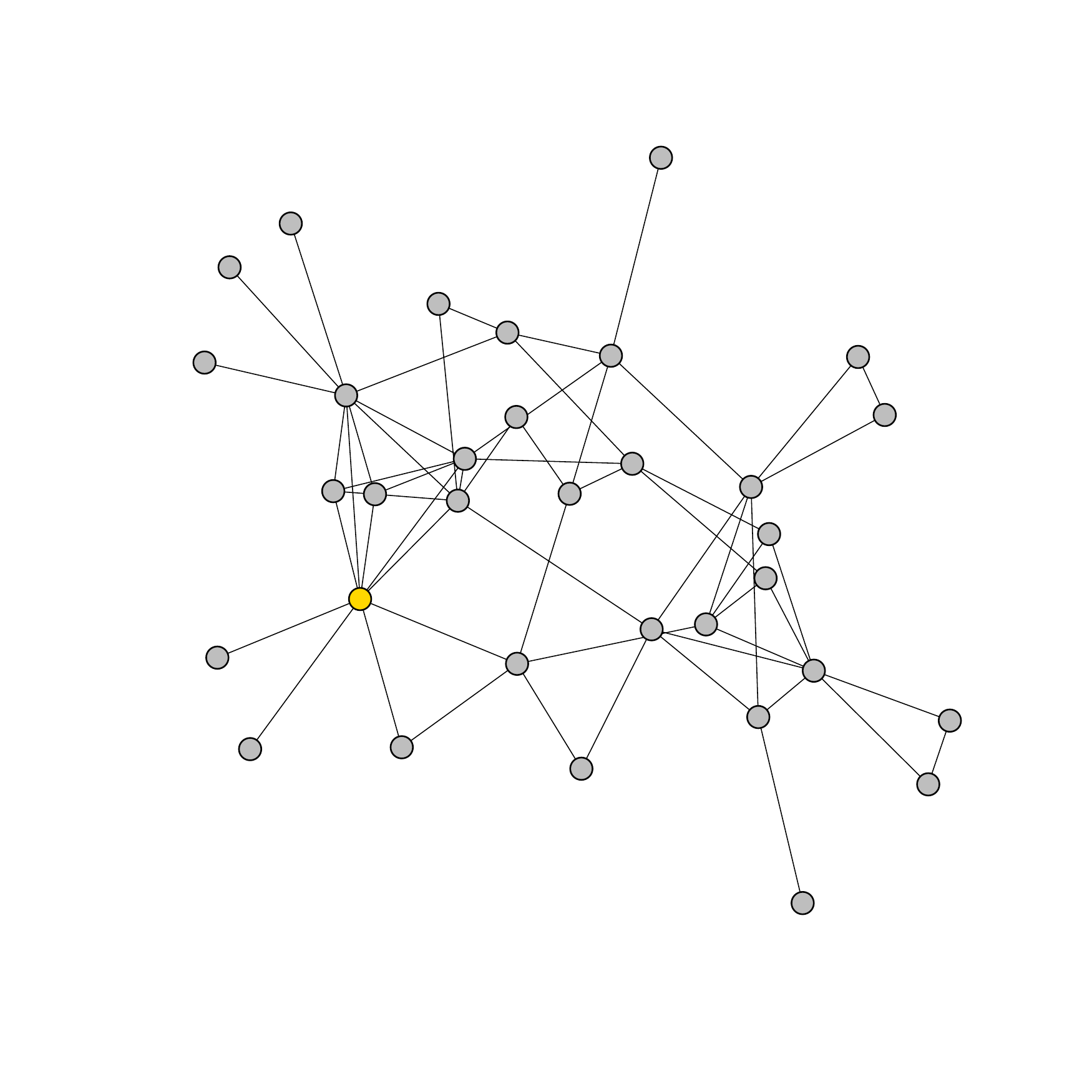}
    \caption{Contact network among 33 students with patient-zero colored yellow.}
    \label{fig:flu-net}
    \end{subfigure}
    \begin{subfigure}{0.45\textwidth}
    \includegraphics[scale=.5, trim=50 80 0 80]{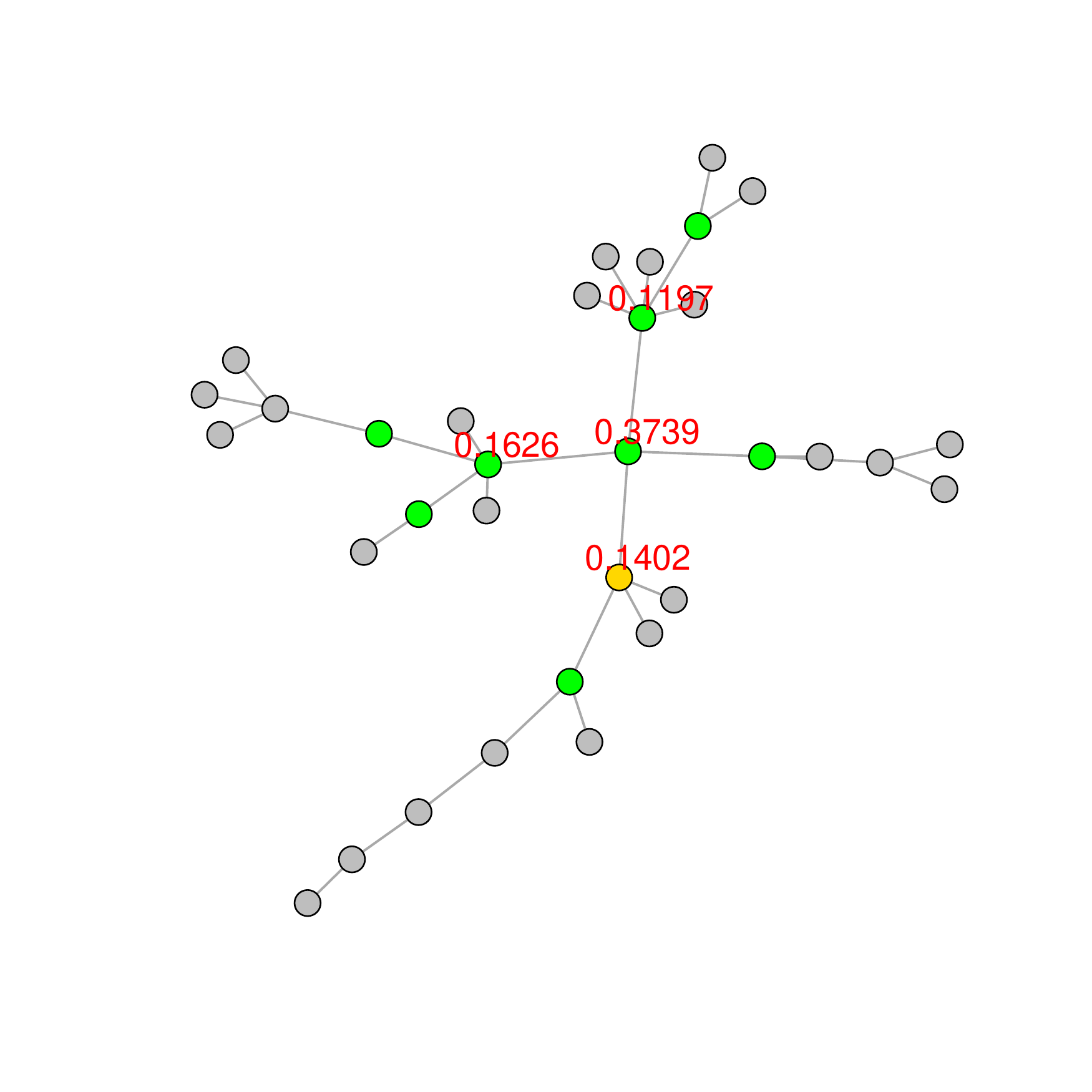}
    \caption{Plausible infection tree reconstructed by \cite{hens2012robust} with confidence sets.}
    \label{fig:flu-tree}
    \end{subfigure}
    \end{center}
    \caption{}
\end{figure}

\begin{table}[htp]
    \centering
    \begin{tabular}{|c|c|c|}
    \hline
    \text{Confidence level} & \textbf{Average size} & \textbf{Empirical Coverage} \\
    \hline
    0.85 & {\bf 4.84} & {\bf 0.895} \\
    \hline
    0.95 & {\bf 9.755} & {\bf 1} \\
    \hline
    \end{tabular}
    \caption{Result of root inference on 200 random spanning trees of the contact network in Figure~\ref{fig:flu-net}.}
    \label{tab:flu}
\end{table}

\section{Discussion}
\label{section:discussion}

In this paper, we consider the specific setting where the shape of the infection tree is known but the infection ordering is unobserved and must be inferred. In many real world applications such as contact tracing, which is used for infectious disease containment, we do not observe the exact infection tree but rather a network of interactions among a group of individuals. In these cases, our methods may be applied as a heuristic on a spanning tree of the observed network. We defer a careful study of inference on the history of a general network to future work. 

Another open question is how to incorporate side information that are often present with the edges. For example, in contact tracing, each edge may be associated with a time stamp of when that edge was formed. The time stamp may be noisy because of the patients being interviewed may not remember the timing of the interactions perfectly but the information could still be valuable in providing a more precise inferential result. 

\section{Acknowledgement}

The second author would like to thank Alexandre Bouchard-Cot\'{e} and Jason Klusowski for helpful conversations. The authors would also like to thank Jean-Gabriel Young for pointing us to some recent related work in the physics community and Tauhid Zaman for providing some additional references. The authors further acknowledge anonymous reviewers for valuable comments and suggestions. This work is partially supported by NSF Grant DMS-1454817. 

\bibliographystyle{dcu}
\bibliography{reference}

\clearpage

\setcounter{section}{0}
\setcounter{equation}{0}
\setcounter{theorem}{0}
\def\theequation{S\arabic{section}.\arabic{equation}}
\def\thesection{S\arabic{section}}
\def\thetheorem{S\arabic{theorem}}

\begin{center}
\Large{Supplementary material to `Inference for the History of a Randomly Growing Tree'} \\ \vspace{0.2in}
\large{Harry Crane and Min Xu}
\end{center}

\section{Proof of Theorem~\ref{thm:pa-shape-exchangeable}}
\label{sec:proof-pa-shape}

\begin{proof}
First suppose that $\phi(d)= \max(0, \alpha+\beta d)$ for $\alpha,\beta$ satisfying the conditions of the theorem.  For $n\geq1$ and an arbitrary recursive tree $\mathbf{t}_n$, let $\mathbf{t}_k:=\mathbf{t}_n\cap[k]$ and let $u_k\in[k-1]$ be the parent node of $k$ for every $k\geq2$. Then we have
\begin{align}
\mathbb{P}(\mathbf{T}_n = \mathbf{t}_n) &=
  \prod_{k=2}^n \frac{\phi(\text{deg}(u_k, \mathbf{t}_{k-1}))}{\sum_{v \in [k-1]} \phi(\text{deg}(v, \mathbf{t}_{k-1}))} \nonumber \\
&= \prod_{k=2}^n \frac{\alpha + \beta \text{deg}(u_k, \mathbf{t}_{k-1})}{\sum_{v \in [k-1]} \alpha + \beta \text{deg}(v, \mathbf{t}_{k-1})} \nonumber \\
&= \prod_{k=2}^{n} \frac{1}{(k-1)\alpha + 2 (k-2) \beta} \prod_{k=2}^n (\alpha + \beta \text{deg}(u_k, \mathbf{t}_{k-1}) )\\
&=\prod_{k=2}^n\frac{1}{(k-1)\alpha+2(k-2)\beta}\prod_{v\in V(\mathbf{t}_n)}(\alpha+\beta)(\alpha+2\beta)\cdots(\alpha+\beta(\text{deg}(v,\mathbf{t}_n)-1)), \label{eq:pa_prob}
\end{align}
where the penultimate equality follows because $\mathbf{t}_{k-1}$ has $k-2$ edges and where the final equality follows because for every node $v \in [n]$ such that $d := \text{deg}(v, \mathbf{t}_n) > 1$, $v$ is attached to a new node at times $k_1, k_2, \ldots, k_{d-1} \subset [n]$ and the degree of $v$ at time $k_i$ is $i$. Because the distribution of $\mathbf{T}_n$ depends on $\mathbf{T}_n$ only through its degree distribution, which is a measurable function of $\text{sh}(\mathbf{T}_n)$, it follows that $\mathbf{T}_n$ is shape exchangeable.

%Suppose first that $\phi(d) = \max(\alpha + \beta d, 0)$ for $\alpha > 0$ and $\beta \in \mathbb{R}$ satisfying the conditions of the Theorem. Then, either $\alpha + \beta d > 0$ for all $d \in \mathbb{N}$ or $\alpha + \beta d > 0$ for $d = 1,\ldots, D-1$ and $\alpha + \beta D = 0$. In the latter case, we have that $\phi(d) = \alpha + \beta d$ for all $d = 1,\ldots, D$. 

%Let $\mathbf{t}_n$ be any fixed recursive tree and for any $k \in [n]$, let $\mathbf{t}_k := \mathbf{t}_n \cap [k]$. For any $k \geq 2$, let $u_k \in [k-1]$ be the parent of node $k$. We have that

%The first term of~\eqref{eq:pa_prob} clearly does not depend on $\mathbf{t}_n$. We claim that the second term depends only on the degree distribution of and hence the shape of $\mathbf{t}_n$. To see this, we note that 
%\[
%\{ \text{deg}(u_k, \mathbf{t}_{k-1}) \}_{k \in [n]} = 
%\bigcup_{v \in [n] \,:\, \text{deg}(v, \mathbf{t}_n) > 1} \{ \text{deg}(v, %\mathbf{t}_n) - 1, \text{deg}(v, \mathbf{t}_n) - 2, \ldots, 1 \}
%\]
%since every node $v$ in $\mathbf{t}_n$ must have been "chosen" as the parent node exactly $\text{deg}(v, \mathbf{t}_n) - 1$ times. The claim therefore follows.

For the converse, suppose that $\mathbf{T}_n$ is from a PA$_{\phi}$ process and is shape exchangeable.  
%Because $\mathbf{T}_n$ is a shape exchangeable preferential attachment model, its distribution assigns equal probability to any realizations $\mathbf{t}_n$ with the same degree distribution {\color{blue} is this obvious?}.  
%Given any degree distribution $(d_k)_{k\geq1}$ of the final state $\mathbf{T}_n$, where $d_k$ denotes the number of nodes with degree $k$ in $\mathbf{T}_n$, the probability of any history resulting in a given shape must be the same regardless of the order in which nodes connect to form $\mathbf{T}_n$.
For any pair of nodes $u$ and $v$ with degree $k$ and $k'$ and at least one leaf node each in a realization $\mathbf{t}_n$, consider removing a leaf from both nodes to obtain $\mathbf{t}^*_{n-2}$ with resulting degree distribution $d^*$.  And now consider adding the $(n-1)$st and $n$th node to $\mathbf{t}^*_{n-2}$ in order to obtain the final state $\mathbf{t}_n$. By shape exchangeability, both orderings must result in the same probability.

Let $\Phi(d^*):=\sum_{w\neq u,v}\phi(\text{deg}(w,\mathbf{t}^*_{n-2}))$ be the total weight to nodes of $\mathbf{t}^*_{n-2}$ other than $u$ and $v$. 
In the case where the $(n-1)$st node connects to $u$ and the $n$th connects to $v$, the conditional probability is
\[\frac{\phi(k-1)}{\Phi(d^*)+\phi(k-1)+\phi(k'-1)}\frac{\phi(k'-1)}{\Phi(d^*)+\phi(k)+\phi(k'-1)}.\]
And in the case where the $(n-1)$st node connects $v$ and the $n$th connects to $u$, the conditional probability is
\[\frac{\phi(k'-1)}{\Phi(d^*)+\phi(k-1)+\phi(k'-1)}\frac{\phi(k-1)}{\Phi(d^*)+\phi(k-1)+\phi(k')}.\]
Shape exchangeability forces
\begin{eqnarray*}
\lefteqn{\frac{\phi(k-1)}{\Phi(d^*)+\phi(k-1)+\phi(k'-1)}\frac{\phi(k'-1)}{\Phi(d^*)+\phi(k)+\phi(k'-1)}=}\\&&=\frac{\phi(k'-1)}{\Phi(d^*)+\phi(k-1)+\phi(k'-1)}\frac{\phi(k-1)}{\Phi(d^*)+\phi(k-1)+\phi(k')},\end{eqnarray*}
from which it immediately follows that $\phi$ satisfies
\[\phi(i+1)+\phi(j-1)=\phi(i)+\phi(j)\]
for all $i,j\geq1$, and thus
\[\phi(i+1)-\phi(i)=\phi(j)-\phi(j-1)=\beta\]
for all $i\geq1$ and $j\geq2$ such that $\phi(i)>0$ and $\phi(j-1)>0$.
It follows that $\phi(i)-\phi(1)=i\beta$ and therefore must have the form
\[\phi(i)=\alpha+i\beta,\quad i\geq1,\]
for $\phi(1)\equiv\alpha$.  Finally, we must have $\phi(d)\geq0$ for all $d$ to ensure that the function $\phi$ determines a valid probability distribution.

\end{proof}

\section{Proof of Lemma~\ref{lem:comparison}}
\label{sec:size-proof}

\begin{proof}

Let $\mathbf{T}_n$ be a random recursive tree, let $\mathbf{T}^*_n$ be an alphabetically labeled representation of the observed shape $\text{sh}(\mathbf{T}_n)$ and let $\rho \in \text{Bi}([n], \mathcal{U}_n)$ be an isomorphism such that $\rho \mathbf{T}_n = \mathbf{T}^*_n$. Fix $\epsilon, \delta \in (0,1)$ and suppose that $C_{\delta \epsilon}(\cdot)$ is a labeling-equivariant (see Remark~\ref{rem:label-equivariance}) confidence set for the root node with asymptotic coverage $1-\delta \epsilon$, that is, $\liminf_{n \rightarrow \infty} \mathbb{P}( \text{root}_\rho(\mathbf{T}_n) \in C_{\delta \epsilon}(\mathbf{T}^*_n)) \geq 1 - \delta \epsilon$.

%Suppose there exists scoring function $\psi$ and, for every $\epsilon \in (0,1)$, an integer $K(\epsilon)$ such that $C_{K(\epsilon), \psi}(\cdot)$ (defined in Section~\ref{sec:prev-work}) is an asymptotically valid confidence set for the root node with coverage at least $1 - \epsilon$. More precisely, we suppose $\liminf_{n \rightarrow \infty} \mathbb{P}( \text{root}_\rho(\mathbf{T}_n) \in C_{K(\epsilon), \psi}(\mathbf{T}^*_n)) \geq 1 - \epsilon$ for any $\epsilon \in (0, 1)$.

Let $\Pi$ be a random permutation drawn uniformly from $\text{Bi}([n], \mathcal{U}_n)$ and write $\tilde{\mathbf{T}}_n := \Pi \mathbf{T}_n$ as the randomly labeled tree. Then, there exists a real-valued sequence $\mu_n \rightarrow 0$ such that
\begin{align}
&\mathbb{P}(\tilde{\mathbf{T}}_1 \in C_{\delta \epsilon}(\tilde{\mathbf{T}}_n) ) \nonumber \\
&= \sum_{\pi \in \text{Bi}([n], \mathcal{U}_n)} \mathbb{P}( \Pi(1) \in C_{\delta \epsilon}(\Pi \mathbf{T}_n) \,|\, \Pi = \pi) \mathbb{P}( \Pi = \pi) \nonumber \\
&= \mathbb{P}( \rho(1) \in C_{\delta \epsilon}(\rho \mathbf{T}_n) ) \nonumber \\
&= \mathbb{P}( \text{root}_\rho(\mathbf{T}_n) \in C_{\delta \epsilon}(\rho \mathbf{T}_n)) \geq 1 - \delta \epsilon + \mu_n, \label{eq:random-coverage}
\end{align}
where the penultimate equality follows from the labeling-equivariance of $C_{\delta \epsilon}(\cdot)$.

For any labeled tree $\tilde{\mathbf{t}}_n$, we have from definition~\eqref{eq:size-K-definition2} that $B_{\epsilon}(\tilde{\mathbf{t}}_n)$ is the smallest labeling-equivariant subset of $\mathcal{U}_n$ such that $\mathbb{P}( \tilde{\mathbf{T}}_1 \in B_{\epsilon}(\tilde{\mathbf{t}}_n) \,|\, \tilde{\mathbf{T}}_n = \tilde{\mathbf{t}}_n ) \geq 1- \epsilon$. Then, if $K_\epsilon(\tilde{\mathbf{t}}_n) > \# C_{\delta \epsilon}(\tilde{\mathbf{t}}_n)$, then it must be that $\mathbb{P}( \tilde{\mathbf{T}}_1 \in C_{\delta \epsilon}(\tilde{\mathbf{T}}_n) \,|\, \tilde{\mathbf{T}}_n = \tilde{\mathbf{t}}_n ) < 1- \epsilon$.

Therefore, we have from~\eqref{eq:random-coverage} that
\begin{align*}
1 - \delta \epsilon + \mu_n &\leq \mathbb{P}(\tilde{\mathbf{T}}_1 \in C_{\delta \epsilon}(\tilde{\mathbf{T}}_n) ) \\
&= \sum_{\tilde{\mathbf{t}}_n \in \tilde{\mathcal{T}}_n} 
\mathbb{P}( \tilde{\mathbf{T}}_1 \in C_{\delta \epsilon}(\tilde{\mathbf{T}}_n) \,|\, \tilde{\mathbf{T}}_n = \tilde{\mathbf{t}}_n ) \mathbb{P}(\tilde{\mathbf{T}}_n = \tilde{\mathbf{t}}_n ) \\
&\leq \mathbb{P}\bigl( K_\epsilon(\tilde{\mathbf{t}}_n) \leq \# C_{\delta \epsilon}(\tilde{\mathbf{t}}_n) \bigr) + (1 - \epsilon) \mathbb{P}\bigl( K_\epsilon(\tilde{\mathbf{t}}_n) > \# C_{\delta \epsilon}(\tilde{\mathbf{t}}_n) \bigr).
\end{align*}

We then obtain by algebra that
\[
\mathbb{P}(K_\epsilon(\tilde{\mathbf{t}}_n) > \# C_{\delta \epsilon}(\tilde{\mathbf{t}}_n)) \leq \delta + \mu_n / \epsilon,
\]
which yields the desired conclusion.
\end{proof}

\section{Proof of Theorem~\ref{thm:likelihood-equivalence}}
\label{sec:likelihood-proof}

\begin{proof}
Before proceeding to the proof, we first establish some helpful notation. 

For any labeled trees $\mathbf{t}, \mathbf{t}'$, not necessarily recursive, we define the set of isomorphisms as
\[
I(\mathbf{t}, \mathbf{t}') := \{ \pi \in \text{Bi}(V(\mathbf{t}), V(\mathbf{t}')) \,:\, \pi \mathbf{t} = \mathbf{t}'\}.
\]
And, for $u \in V(\mathbf{t})$ and $v \in V(\mathbf{t}')$, we also define the restricted set of isomorphisms as
\[
I(\mathbf{t},u, \mathbf{t}',v) := \{ \pi \in \text{Bi}(V(\mathbf{t}), V(\mathbf{t}')) \,:\, \pi \mathbf{t} = \mathbf{t}',\, \pi(u) = v\}.
\]
We note that $I(\mathbf{t}, \mathbf{t})$ is the set of automorphisms of $\mathbf{t}$. 

We have the following facts:
\begin{enumerate}
\item[Fact 1] $I(\mathbf{t}, \mathbf{t}')$ is non-empty if and only if $\mathbf{t}, \mathbf{t}'$ have the same shape. Moreover, the cardinality of $I(\mathbf{t}, \mathbf{t}')$ depends only on that shape. 
\item[Fact 2] $I(\mathbf{t}, u, \mathbf{t}', u')$ is non-empty if and only if $(\mathbf{t}, u)$ and $(\mathbf{t}', u')$ have the same rooted shape and the cardinality of $I(\mathbf{t}, u, \mathbf{t}', u')$ depends only on that rooted shape. As a consequence,  $I(\mathbf{t}, u, \mathbf{t}, v)$ is non-empty if and only if $v \in \text{Eq}(u, \mathbf{t})$.
\end{enumerate}

Fix $(\mathbf{t}, u)$ and $(\mathbf{t}', u')$ and let us suppose that they have the same rooted shape. If $\# \text{Eq}(u, \mathbf{t}) = 1$, then $\# I(\mathbf{t}, u, \mathbf{t}', u') = \# I(\mathbf{t}, \mathbf{t'})$. In general, we have that
\[
\# I(\mathbf{t}, \mathbf{t'}) = \sum_{u'' \in V(\mathbf{t'})} \# I(\mathbf{t}, u, \mathbf{t'}, u'') = \# I(\mathbf{t}, u, \mathbf{t}', u') \# \text{Eq}(u, \mathbf{t}).
\]

Recall that any history $\tilde{\mathbf{t}}_1 \subset \tilde{\mathbf{t}}_2 \subset \ldots \subset \tilde{\mathbf{t}}_n$ can be represented as a pair $(\mathbf{t}_n, \pi)$ where $\mathbf{t}_n$ is a recursive tree such that $\text{sh}(\mathbf{t}_n) = \text{sh}(\tilde{\mathbf{t}}_n)$ and where $\pi$ is a bijection from $[n]$ to $\mathcal{U}_n$. Similarly, any pair $(\mathbf{t}_n, \pi)$ can be represented as a history by taking $\tilde{\mathbf{t}}_k = \pi \mathbf{t}_k$ for all $k \in [n]$.

We then have that \begin{align}
\pi \in I(\mathbf{t}_n, 1, \tilde{\mathbf{t}}_n, u) \text{ if and only if } (\mathbf{t}_n, \pi) \in \text{hist}(\tilde{\mathbf{t}}_n, u).
\label{eqn:rooted_isomorphism}
\end{align}

Let $\Pi$ be a random bijection distributed uniformly in $\text{Bi}([n], \mathcal{U}_n)$, independently of $\mathbf{T}_n$,  such that $\tilde{\mathbf{T}}_n = \Pi \mathbf{T}_n$. We have, for any $\tilde{\mathbf{t}}_n$ with $V(\tilde{\mathbf{t}}_n) = \mathcal{U}_n$ and $u \in \mathcal{U}_n$, 
\begin{align*}
   & \sum_{(\mathbf{t}_n, \pi) \in \text{hist}(\tilde{\mathbf{t}}_n, u)} \mathbb{P}( \tilde{\mathbf{T}}_1 = \pi \mathbf{t}_1, \ldots, \tilde{\mathbf{T}}_{n-1} = \pi \mathbf{t}_{n-1}, \tilde{\mathbf{T}}_n = \pi \mathbf{t}_n) \\
   &=   \sum_{(\mathbf{t}_n, \pi) \in \text{hist}(\tilde{\mathbf{t}}_n, u)} \mathbb{P}( \mathbf{T}_1 = \mathbf{t}_1, \ldots, \mathbf{T}_{n-1} = \mathbf{t}_{n-1}, \mathbf{T}_n = \mathbf{t}_n) \mathbb{P}( \Pi = \pi) \\
    &= \frac{1}{n!} \sum_{(\mathbf{t}_n, \pi) \in \text{hist}(\tilde{\mathbf{t}}_n, u)} \mathbb{P}( \mathbf{T}_1 = \mathbf{t}_1, \ldots, \mathbf{T}_{n-1} = \mathbf{t}_{n-1}, \mathbf{T}_n = \mathbf{t}_n) \\
    &= \frac{1}{n!} \sum_{\mathbf{t}_n \in \text{recur}(\tilde{\mathbf{t}}_n, u)} \mathbb{P}(\mathbf{T}_n = \mathbf{t}_n) \#I(\mathbf{t}_n, 1, \tilde{\mathbf{t}}_n, u) \\
    &= \frac{\# I(\tilde{\mathbf{t}}_n, \tilde{\mathbf{t}}_n)}{\# \text{Eq}(u, \tilde{\mathbf{t}}_n) n!} \sum_{\mathbf{t}_n \in \text{recur}(\tilde{\mathbf{t}}_n, u)} \mathbb{P}(\mathbf{T}_n = \mathbf{t}_n) \\
    &= \frac{\# I(\tilde{\mathbf{t}}_n, \tilde{\mathbf{t}}_n)}{n!} \mathcal{L}(u, \tilde{\mathbf{t}}_n).
\end{align*}

Thus,
\begin{align*}
    \mathbb{P}(\tilde{\mathbf{T}}_1 = \{u\} \,|\, \tilde{\mathbf{T}}_n = \tilde{\mathbf{t}}_n) &=
    \frac{ \sum_{(\mathbf{t}_n, \pi) \in \text{hist}(\tilde{\mathbf{t}}_n,u)} \mathbb{P}( \tilde{\mathbf{T}}_1 = \pi \mathbf{t}_1 = u, \ldots, \tilde{\mathbf{T}}_{n-1} = \pi \mathbf{t}_{n-1}, \tilde{\mathbf{T}}_n = \pi \mathbf{t}_n) } % 
    { % begin denom
     \sum_{(\mathbf{t}_n, \pi) \in \text{hist}(\tilde{\mathbf{t}}_n)} \mathbb{P}( \tilde{\mathbf{T}}_1 = \pi \mathbf{t}_1, \ldots, \tilde{\mathbf{T}}_{n-1} = \pi \mathbf{t}_{n-1}, \tilde{\mathbf{T}}_n = \pi \mathbf{t}_n) 
    }\\
    &= \frac{\mathcal{L}(u, \tilde{\mathbf{t}}_n)}{ \sum_{v \in \mathcal{U}_n} \mathcal{L}(v, \tilde{\mathbf{t}}_n)}.
\end{align*}

The final equality in the statement of the Theorem follows from the observation that
\[
\frac{\mathbb{P}( (\mathbf{T}_n, 1) \in \text{sh}_0(\tilde{\mathbf{t}}_n, u))}{\#\text{Eq}(u, \tilde{\mathbf{t}}_n)} = \frac{1}{\# \text{Eq}(\tilde{\mathbf{t}}_n, u)}  \sum_{\mathbf{t}_n \in \text{recur}(\tilde{\mathbf{t}}_n, u)} \mathbb{P}(\mathbf{T}_n = \mathbf{t}_n) = \mathcal{L}(u, \tilde{\mathbf{t}}_n).
\]
and that
\[
\mathbb{P}(\mathbf{T}_n \in \text{sh}(\tilde{\mathbf{t}}_n) ) = \sum_{v \in \mathcal{U}_n} \frac{\mathbb{P}( (\mathbf{T}_n, 1) \in \text{sh}_0(\tilde{\mathbf{t}}_n, v))}{\# \text{Eq}(v, \tilde{\mathbf{t}}_n)} = \sum_{v \in \mathcal{U}_n} \mathcal{L}(v, \tilde{\mathbf{t}}_n),
\]
where we divide by the size of the equivalent node class to adjust for double counting. The theorem then follows as desired. 
\end{proof}

\section{Coverage guarantee for arrival time inference}
\label{sec:general-inference-supplement}

Fix node $u \in \mathcal{U}_n$ and let $C^{(u)}_\epsilon \subset [n]$ be a set of possible arrival times of node $u \in \mathcal{U}_n$. We say that $C^{(u)}_{\epsilon}$ is labeling-equivariant if $C^{(u)}_\epsilon$ does not depend on the labeled representation $\mathbf{T}^*_n$ of the unlabeled shape $\text{sh}(\mathbf{T}_n)$ in the sense that 
\[
C^{(u)}_\epsilon(\mathbf{T}^*_n) = C^{(\tau(u))}_\epsilon(\tau \mathbf{T}^*_n)
\]
for any $\tau \in \text{Bi}(\mathcal{U}_n, \mathcal{U}_n)$, as the node $u$ gets relabeled $\tau(u)$ under the $\tau \mathbf{T}^*_n$ representation. With this requirement and the fact that $\text{Arr}^{(u)}_\rho(\mathbf{T}_n) = \text{Arr}^{(\tau(u))}_{\tau \circ \rho}(\mathbf{T}_n)$ for any $\tau \in \Theta(\mathcal{U}_n, \mathcal{U}_n)$,  we see that $\text{Arr}^{(u)}_\rho (\mathbf{T}_n) \in C^{(u)}_\epsilon(\mathbf{T}^*_n)$ if and only if $\text{Arr}^{(\tau(u))}_{\tau \circ \rho}(\mathbf{T}_n) \in C^{(\tau(u))}_\epsilon(\tau \mathbf{T}^*_n)$ and hence the arrival time inference problem is well-defined.

Next, we show that the credible set for the arrival time of a node has valid Frequentist coverage. Recall that for a random recursive tree $\mathbf{T}_n$, we define $\tilde{\mathbf{T}}_1, \ldots, \tilde{\mathbf{T}}_n$ as the corresponding label-randomized sequence of trees. For a given labeled tree $\tilde{\mathbf{t}}_n$ with $V(\tilde{\mathbf{t}}_n) = \mathcal{U}_n$, for a node $u \in \mathcal{U}_n$, for $\epsilon \in (0, 1)$, define $B_\epsilon^{(u)}(\tilde{\mathbf{t}}_n)$ as the smallest subset of $[n]$ such that
\[
\mathbb{P}( \Pi^{-1}(u) \in B_{\epsilon}^{(u)}(\tilde{\mathbf{t}}_n) \,|\, \tilde{\mathbf{T}}_n = \tilde{\mathbf{t}}_n) =
\sum_{t \in B_\epsilon^{(u)}(\tilde{\mathbf{t}}_n)} \mathbb{P}( u \in \tilde{\mathbf{T}}_t \text{ and } u \notin \tilde{\mathbf{T}}_{t-1} \,|\, \tilde{\mathbf{T}}_n = \tilde{\mathbf{t}}_n) \geq 1 - \epsilon,
\]
where $\Pi$ is a random bijection distributed uniformly in $\text{Bi}([n], \mathcal{U}_n)$ and independently of $\mathbf{T}_n$ such that $\Pi \mathbf{T}_n = \tilde{\mathbf{T}}_n$ and where we take $\tilde{\mathbf{T}}_0 $ as the empty set. Then, we have the following guarantee.

\begin{proposition}
Let $\mathbf{T}_n$ be a random recursive tree and let $\mathbf{T}^*_n \in \text{sh}(\mathbf{T}_n)$ be any labeled representation such that $V(\mathbf{T}^*_n) = \mathcal{U}_n$. Then, for any $\rho \in \text{Bi}([n], \mathcal{U}_n)$ such that $\rho \mathbf{T}_n = \mathbf{T}^*_n$, for any $\epsilon \in (0,1)$,
\begin{align}
\mathbb{P}( \text{Arr}^{(u)}_{\rho}(\mathbf{T}_n) \in B_\epsilon^{(u)}(\mathbf{T}^*_n) ) \geq 1 - \epsilon.
\label{eq:arrival-time-credible}
\end{align}
Moreover, $B_\epsilon^{(u)}(\cdot)$ is labeling-equivariant.
\end{proposition}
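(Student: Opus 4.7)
The plan is to mirror the two-move strategy used in the proof of Theorem~\ref{thm:frequentist-coverage}: first establish that $B^{(u)}_\epsilon(\cdot)$ is labeling-equivariant, then use that equivariance together with the defining Bayesian-coverage property of $B^{(u)}_\epsilon$ to upgrade the trivial randomized coverage into a frequentist guarantee.

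For labeling-equivariance, I would show that for every $\tau \in \text{Bi}(\mathcal{U}_n, \mathcal{U}_n)$ one has $B^{(u)}_\epsilon(\tilde{\mathbf{t}}_n) = B^{(\tau(u))}_\epsilon(\tau \tilde{\mathbf{t}}_n)$ as subsets of $[n]$. With $\tilde{\mathbf{T}}_n = \Pi \mathbf{T}_n$ for a uniform $\Pi \in \text{Bi}([n], \mathcal{U}_n)$ independent of $\mathbf{T}_n$, substituting $\Pi \mapsto \tau \Pi$ (another uniform bijection) gives the distributional identity $(\tilde{\mathbf{T}}_n, \Pi^{-1}(u)) \stackrel{d}{=} (\tau \tilde{\mathbf{T}}_n, \Pi^{-1}(\tau(u)))$, since $(\tau \Pi)^{-1}(\tau(u)) = \Pi^{-1}(u)$. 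Hence the conditional law of $\Pi^{-1}(u)$ given $\tilde{\mathbf{T}}_n = \tilde{\mathbf{t}}_n$ agrees with the conditional law of $\Pi^{-1}(\tau(u))$ given $\tilde{\mathbf{T}}_n = \tau \tilde{\mathbf{t}}_n$, and so the smallest credible sets at level $1-\epsilon$ coincide.

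For the coverage bound, I would replay the chain of equalities in Theorem~\ref{thm:frequentist-coverage}. Introducing an independent uniform $\Pi \in \text{Bi}(\mathcal{U}_n, \mathcal{U}_n)$,
\begin{align*}
\mathbb{P}(\rho^{-1}(u) \in B^{(u)}_\epsilon(\rho \mathbf{T}_n))
&= \mathbb{P}\{(\Pi \rho)^{-1}(\Pi(u)) \in B^{(\Pi(u))}_\epsilon((\Pi \rho)\mathbf{T}_n) \mid \Pi = \mathrm{Id}\} \\
&= \mathbb{P}\{(\Pi \rho)^{-1}(\Pi(u)) \in B^{(\Pi(u))}_\epsilon((\Pi \rho)\mathbf{T}_n)\},
\end{align*}
where the second line uses labeling-equivariance: for every realization of $\Pi$ we have $(\Pi \rho)^{-1}(\Pi(u)) = \rho^{-1}(u)$ and $B^{(\Pi(u))}_\epsilon((\Pi \rho)\mathbf{T}_n) = B^{(u)}_\epsilon(\rho \mathbf{T}_n)$, so the indicator is $\Pi$-invariant and the conditioning on $\Pi = \mathrm{Id}$ is free. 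Writing $\tilde{\mathbf{T}}_n = (\Pi \rho)\mathbf{T}_n$, the outer probability equals $\mathbb{P}(t_0 \in B^{(\tilde{u})}_\epsilon(\tilde{\mathbf{T}}_n))$ with $t_0 = \rho^{-1}(u)$ fixed and $\tilde{u} = \Pi(u)$.

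The hard part will be the last identification. In the root case the analog is painless because $(\Pi \rho)(1) = \tilde{\mathbf{T}}_1$ is exactly the object appearing in the definition of $B_\epsilon$. Here, by contrast, $(\Pi \rho)^{-1}(\Pi(u))$ collapses to the deterministic value $t_0$, while the confidence set carries a random target label $\tilde{u}$. To close the loop I expect to condition on $\tilde{\mathbf{T}}_n = \tilde{\mathbf{t}}_n$ and on $\tilde{u} = v$, observe that $\mathbb{P}(\tilde{u} = v \mid \tilde{\mathbf{T}}_n = \tilde{\mathbf{t}}_n) = \mathbb{P}(\Pi^{-1}(v) = t_0 \mid \tilde{\mathbf{T}}_n = \tilde{\mathbf{t}}_n)$ (both events say "the node at arrival time $t_0$ in $\mathbf{T}_n$ is labeled $v$ in $\tilde{\mathbf{t}}_n$"), and then rewrite $\mathbb{P}(t_0 \in B^{(\tilde{u})}_\epsilon(\tilde{\mathbf{t}}_n) \mid \tilde{\mathbf{t}}_n) = \sum_v \mathbb{P}(\Pi^{-1}(v) = t_0 \mid \tilde{\mathbf{t}}_n) \mathbbm{1}\{t_0 \in B^{(v)}_\epsilon(\tilde{\mathbf{t}}_n)\}$ as a posterior mass against which the defining bound $\mathbb{P}(\Pi^{-1}(v) \in B^{(v)}_\epsilon(\tilde{\mathbf{t}}_n) \mid \tilde{\mathbf{t}}_n) \geq 1 - \epsilon$ can be applied. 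Careful bookkeeping of the Bayesian coverage across the swap of roles between "node being queried" and "target time" should deliver the $\geq 1-\epsilon$ lower bound on the marginal.
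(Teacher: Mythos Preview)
Your equivariance argument is fine and matches the paper's. The divergence is in the coverage step, and there you have made your own life much harder than necessary by composing with a uniform $\Pi\in\text{Bi}(\mathcal{U}_n,\mathcal{U}_n)$ and tracking the \emph{random} target label $\tilde u=\Pi(u)$. As you yourself notice, this leaves you with the quantity $\mathbb{P}\bigl(t_0\in B_\epsilon^{(\tilde u)}(\tilde{\mathbf T}_n)\bigr)$ in which the node being queried is random while the time $t_0$ is fixed, the exact opposite of what the defining inequality $\mathbb{P}\bigl(\Pi^{-1}(v)\in B_\epsilon^{(v)}(\tilde{\mathbf t}_n)\mid \tilde{\mathbf T}_n=\tilde{\mathbf t}_n\bigr)\ge 1-\epsilon$ controls. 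Your proposed ``swap of roles'' bookkeeping does not close this gap: the sum $\sum_v \mathbb{P}(\Sigma^{-1}(v)=t_0\mid\tilde{\mathbf t}_n)\,\mathbbm{1}\{t_0\in B_\epsilon^{(v)}(\tilde{\mathbf t}_n)\}$ you arrive at is a sum over $v$ with $t_0$ fixed, whereas the credible-set bound is a sum over $t$ with $v$ fixed, and there is no general inequality linking the two.

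The paper sidesteps the whole issue by a different parametrization: take $\Pi$ uniform in $\text{Bi}([n],\mathcal{U}_n)$ (not $\text{Bi}(\mathcal{U}_n,\mathcal{U}_n)$), keep the label $u$ \emph{fixed}, and write
\[
\mathbb{P}\bigl(\rho^{-1}(u)\in B_\epsilon^{(u)}(\rho\mathbf{T}_n)\bigr)
=\mathbb{P}\bigl(\Pi^{-1}(u)\in B_\epsilon^{(u)}(\Pi\mathbf{T}_n)\,\big|\,\Pi=\rho\bigr)
=\mathbb{P}\bigl(\Pi^{-1}(u)\in B_\epsilon^{(u)}(\tilde{\mathbf T}_n)\bigr)\ge 1-\epsilon,
\]
the last inequality being immediate from the definition of $B_\epsilon^{(u)}$ after conditioning on $\tilde{\mathbf T}_n$. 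The point is that with $u$ fixed, $\Pi^{-1}(u)$ is precisely the random arrival time whose conditional law defines $B_\epsilon^{(u)}$, so no ``hard part'' ever arises. (The removal of the conditioning on $\Pi=\rho$ uses labeling-equivariance: for $\pi'=\tau\pi$ one checks $\{(\pi')^{-1}(u)\in B_\epsilon^{(u)}(\pi'\mathbf T_n)\}=\{\pi^{-1}(\tau^{-1}u)\in B_\epsilon^{(\tau^{-1}u)}(\pi\mathbf T_n)\}$, so the conditional probability given $\Pi=\pi$ depends on $\pi$ only through $\pi^{-1}(u)$; the paper suppresses this, and one can argue it is the same delicate point you are wrestling with, but in the paper's parametrization the final inequality is at least a one-liner rather than an open-ended bookkeeping exercise.)
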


\begin{proof}
We closely follow the proof of Theorem~\ref{thm:frequentist-coverage}. 

Let $\tilde{\mathbf{t}}_n$ be a labeled tree with $V(\tilde{\mathbf{t}}_n) = \mathcal{U}_n$ and let $u \in \mathcal{U}_n$ be a node. We first show labeling-equivariance: we claim that for any $\tau \in \text{Bi}(\mathcal{U}_n, \mathcal{U}_n)$,
\[
B_\epsilon^{(u)}(\tilde{\mathbf{t}}_n) = B_\epsilon^{\tau(u)}(\tau \tilde{\mathbf{t}}_n).
\]
To see this, note that $\{ \tau \tilde{\mathbf{T}}_1, \ldots, \tau \tilde{\mathbf{T}}_n \} \stackrel{d}{=} \{\tilde{\mathbf{T}}_t, \ldots, \tilde{\mathbf{T}}_n \}$ and thus, 
\begin{align*}
\mathbb{P}( u \in \tilde{\mathbf{T}}_t \text{ and } u \notin \tilde{\mathbf{T}}_{t-1} \,|\, \tilde{\mathbf{T}}_n = \tilde{\mathbf{t}}_n) &= \mathbb{P}( u \in \tau^{-1} \tilde{\mathbf{T}}_t \text{ and } u \notin \tau^{-1} \tilde{\mathbf{T}}_{t-1} \,|\, \tau^{-1} \tilde{\mathbf{T}}_n = \tilde{\mathbf{t}}_n) \\
&= \mathbb{P}( \tau(u) \in \tilde{\mathbf{T}}_t \text{ and } u \notin \tilde{\mathbf{T}}_{t-1} \,|\, \tilde{\mathbf{T}}_n = \tau \tilde{\mathbf{t}}_n)
\end{align*}

Now let $\Pi$ be a random bijection distributed uniformly in $\text{Bi}([n], \mathcal{U}_n)$ and independently of $\mathbf{T}_n$ and let $\tilde{\mathbf{T}}_n = \Pi \mathbf{T}_n$, we have that for any $\epsilon \in (0, 1)$, 
\begin{align*}
\mathbb{P}( \text{Arr}^{(u)}_{\rho} (\mathbf{T}_n) \in B^{(u)}_{\epsilon}(\mathbf{T}^*_n) ) 
&= \mathbb{P}( \rho^{-1}(u) \in B^{(u)}_{\epsilon}(\rho \mathbf{T}_n) ) \\
&= \mathbb{P}( \Pi^{-1}(u) \in B^{(u)}_{\epsilon}(\Pi \mathbf{T}_n) \,|\, \Pi = \rho) \\
&= \mathbb{P}( \Pi^{-1}(u) \in B^{(u)}_{\epsilon}( \tilde{\mathbf{T}}_n) ) \geq 1 - \epsilon,
\end{align*}
where the last inequality follows because~\eqref{eq:arrival-time-credible} holds for every $\tilde{\mathbf{t}}_n$. 
\end{proof}

\section{Lower bound on constant $C$}
\label{sec:constant-justification}

In Section~\ref{sec:simulation}, we compare the size of our confidence sets against the bound of $C \frac{\log^2 1/\epsilon}{\epsilon^4}$ provided in \cite{bubeck2017finding} for the linear preferential attachment setting. The value of the universal constant $C$ is difficult to determine since it depends on a non-normal limiting distribution described only through its characteristics function; see \cite{janson2006limit} for more details. 

We claim however that $C \geq 0.23$. To see this, we note that when $\epsilon \leq 0.5$, any confidence set must contain at least 2 nodes since it is impossible to estimate the root with probability greater than 0.5. Therefore, with $\epsilon = 0.49$, 
\[
C \geq 2 \biggl( \frac{\log^2 (1/\epsilon)}{\epsilon^4} \biggr)^{-1} \geq 0.23.
\]

\end{document}